\theoremstyle{plain}
\newtheorem{theorem}{Theorem}[section]
\newtheorem{conj}[theorem]{Conjecture}
\newtheorem{lemma}[theorem]{Lemma}
\newtheorem{cor}[theorem]{Corollary}
\newtheorem{proposition}[theorem]{Proposition}
\newtheorem{rem}{Remark}[section]
\newtheorem*{slll}{The Lov\'asz local lemma}
\newcounter{claimcount} 
\newtheorem{claim}[claimcount]{Claim}
\newcommand{\resetclaimcount}{\setcounter{claimcount}{0}}
\DeclareMathOperator{\Del}{Del}
\newcommand{\del}{\Del_c}
\DeclareMathOperator{\dom}{dom}
\newcommand{\Prob}{\mathbb{P}}
\newcommand{\sst}[2]{\left\{\,#1 \,\,\middle|\,\, #2\,\right\}}
\DeclareMathOperator{\strong}{s}
\newcommand{\xs}{\chi'_{\strong}}
\newcommand{\E}{\mathbb{E}}
\newcommand{\Eon}[1]{\underset{#1}{\E}}
\newcommand{\N}{\mathbb{N}}
\newcommand{\R}{\mathbb{R}}
\newcommand{\luv}{\ell_{uv}}
\newcommand{\lvw}{\ell_{vw}}
\newcommand{\luw}{\ell_{uw}}
\newcommand{\I}{{\mathcal{I}}}
\newcommand{\bI}{{\bf{I}}}
\newcommand{\A}{{\bf{A}}}
\newcommand{\Pk}{\Prob_{\text{keep}}}
\newcommand{\B}{{\mathcal{B}}}
\newcommand{\codeg}{\Lambda}
\newcommand{\cosigma}{\widehat{\sigma}}
\newcommand{\clsigma}{\dot{\sigma}}
\newcommand{\eps}{\varepsilon}
\newcommand{\GBJ}{\overrightharp{G}_1}%{G_{\mathrm{BJ}}}
\newcommand{\GHJK}{\overrightharp{G}_2}%{G_{\mathrm{HJK}}}
\begin{document}

\begin{frontmatter}[classification=text]
%% EDITOR: this will force the keywords to appear right after the Abstract.
%%   If the abstract is too long and would force the keywords off the
%%   front page, please comment out % [classification=text] above
%%   This way the keywords will be floated on the bottom of the first page
%%   even though the Abstract spills over to the next page.

%%% AUTHOR: Title goes here.  This line is optional.  You must use it
%%   if title has footnote attached or requires nontrivial typesetting,
%%   e.g., inclusion of linebreaks to force nice layout.
\title{An Improved Procedure for Colouring Graphs of Bounded Local Density\titlefootnote{A preliminary version of this paper appeared in Proceedings of the 2021 ACM-SIAM Symposium on Discrete Algorithms (SODA 2021, Alexandria, USA): 135-148, 2021.}} %% please capitalize all significant words

%%% AUTHOR:
%%% List all authors. If you wish, place grant acknowledgements in \thanks.
%%% In brackets include a short tag for each author.
\author[eoin]{Eoin Hurley\thanks{Part of this research was carried out while the author was a master's student at Utrecht University.}}
\author[remi]{R\'emi de Joannis de Verclos\thanks{Supported by a Vidi grant (639.032.614) of the Netherlands Organisation for Scientific Research (NWO). This research was conducted while these authors were at Radboud University.}}
\author[ross]{Ross J. Kang\footnotemark[2]}

%%% AUTHOR: Abstract goes here
\begin{abstract}
We develop an improved bound for the chromatic number of graphs of maximum degree $\Delta$ under the assumption that the number of edges spanning any neighbourhood is at most $(1-\sigma)\binom{\Delta}{2}$ for some fixed $0<\sigma<1$.
The leading term in the reduction of colours achieved through this bound is best possible as $\sigma\to0$.
As two consequences, we advance the state of the art in two longstanding and well-studied graph colouring conjectures, the Erd\H{o}s--Ne\v{s}et\v{r}il conjecture and Reed's conjecture.
We prove that the strong chromatic index is at most $1.772\Delta^2$ for any graph $G$ with sufficiently large maximum degree $\Delta$.
We prove that the chromatic number is at most $\lceil 0.881(\Delta+1)+0.119\omega\rceil$ for any graph $G$ with clique number $\omega$ and sufficiently large maximum degree $\Delta$.
Additionally, we show how our methods can be adapted under the additional assumption that the codegree is at most $(1-\sigma)\Delta$, and establish what may be considered first progress towards a conjecture of Vu.
\end{abstract}
\end{frontmatter}

%%% AUTHOR: body of paper starts here
\section{Introduction}

This paper follows in a long line of investigation of the following Ramsey-type graph colouring problem.
\begin{quote}\em
What is the best upper bound on the chromatic number $\chi$ for graphs of given maximum degree $\Delta$ and given maximum local density --- that is, with neighbourhood subgraphs each inducing at most a certain edge density?
\end{quote}
This deep and elegant problem has its roots going back more than half a century~\cite{Viz68}.
The archetypal result of this type is one of Johansson~\cite{Joh96} that was recently sharpened with the entropy compression method by Molloy~\cite{Mol19} as follows: any graph $G$ that is triangle-free --- that is, with a maximum local density of precisely zero --- has chromatic number satisfying $\chi(G)\le(1+o(1))\Delta(G)/\log\Delta(G)$ as $\Delta(G)\to\infty$.
This betters by a logarithmic factor the trivial upper bound $\chi(G)\le \Delta(G)+1$ that holds for {\em any} graph $G$.
It is sharp up to a (small) constant multiple due to random regular graphs.
Moreover, it yields as a direct corollary the asymptotically best to date upper bounds on the off-diagonal Ramsey numbers due to Shearer~\cite{She83}.
Alon, Krivelevich and Sudakov~\cite{AKS99} bootstrapped Johansson's theorem to show a more general bound under the condition of maximum local density at most $1/f=o(1)$. This too has been refined recently~\cite{DKPS20+} using elementary properties of the hard-core model (cf.~also~\cite{DJKP21} and~\cite{DKPS20+b}) as follows: any graph $G$ with local density at most $1/f$, where $f=f(\Delta(G))$, $f \to\infty$ as $\Delta(G)\to\infty$, and $f\le \binom{\Delta(G)}2+1$, has chromatic number satisfying $\chi(G)\le(1+o(1))\Delta(G)/\log\sqrt{f}$. Note that this statement includes the triangle-free one as a special case with $f=\binom{\Delta(G)}2+1$. While in that result the condition $1/f=o(1)$ excludes the possibility of a neighbourhood subgraph having 
nonnegligible density, here it is this `denser' situation which will be our primary focus.

To specify how far we are from a trivial local density condition, we adopt the following notation. Given $\sigma>0$, a graph $G$ is said to be {\em $\sigma$-sparse} if for every $v\in V(G)$ the subgraph $G[N(v)]$ induced by the neighbourhood $N(v)$ of $v$ has at most $(1-\sigma)\binom{\Delta(G)}2$ edges.
As a means towards progress in a problem of Erd\H{o}s and Ne\v{s}et\v{r}il (which we discuss in further detail later on in the paper), Molloy and Reed~\cite{MoRe97} initiated the study of the chromatic number of $\sigma$-sparse graphs. In particular, using a ``na\"ive'' probabilistic colouring procedure, they showed the following.

\begin{theorem}[Molloy and Reed~\cite{MoRe97}]\label{thm:MoResparse}
There is a positive function $\eps_{\ref{thm:MoResparse}}=\eps_{\ref{thm:MoResparse}}(\sigma)$ such that the following holds.
For each $0<\sigma\le1$ there is $\Delta_0$ such that the chromatic number satisfies $\chi(G)\le (1-\eps_{\ref{thm:MoResparse}})\Delta(G)$ for any $\sigma$-sparse graph $G$ with $\Delta(G)\ge\Delta_0$.
\end{theorem}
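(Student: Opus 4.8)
The plan is to analyse a single round of the ``naïve'' random colouring procedure and then finish the colouring greedily. Fix a small constant $\eps = \eps(\sigma) > 0$ to be determined, set $C := \lceil (1-\eps)\Delta \rceil$, and give each vertex $u$ an independent, uniformly random colour $c(u) \in \{1,\dots,C\}$; say $u$ \emph{retains} its colour if no neighbour of $u$ received the same colour, and otherwise $u$ becomes uncoloured. The resulting partial colouring is proper. For a vertex $v$ write $K_v$ for the number of neighbours of $v$ that retain their colour, $U_v := d(v)-K_v$ for the number of uncoloured neighbours, and $B_v$ for the number of distinct colours appearing on retained neighbours of $v$. If, with positive probability, $B_v + U_v < C$ for every $v$ simultaneously, then we may order the uncoloured vertices arbitrarily and colour them one at a time: when $v$ is reached, the colours forbidden for $v$ are those of its retained neighbours together with those already assigned to its other (formerly uncoloured) neighbours, at most $B_v + U_v < C$ in total, so a colour is always available and we obtain a proper $C$-colouring. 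Since $B_v + U_v = d(v) - (K_v - B_v) \le \Delta - (K_v-B_v)$, the inequality $B_v + U_v < C$ is implied by
$$ K_v - B_v \ge \eps\Delta + 1. $$
For vertices with $d(v) \le C-1$ one has $B_v + U_v \le d(v) < C$ automatically, so it suffices to verify the display for every vertex $v$ with $d(v) \ge C$.

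The point where $\sigma$-sparsity enters is the identity $K_v - B_v = \sum_{\gamma=1}^{C}(N_{v,\gamma}-1)^+$, where $N_{v,\gamma}$ is the number of retained neighbours of $v$ of colour $\gamma$: this counts ``wasted'' retentions, and it is precisely a sparse neighbourhood that forces many colours to be doubled up. I would first show that for each fixed colour $\gamma$ and each $v$ with $d(v)\ge C$, the probability that two non-adjacent neighbours of $v$ both receive $\gamma$ and both retain it is bounded below by a constant $c_1 = c_1(\eps,\sigma) > 0$ that stays bounded away from $0$ as $\eps \to 0$. This is a routine second-moment computation: a $\sigma$-sparse graph with $d(v)\ge(1-\eps)\Delta$ has at least $\binom{d(v)}{2}-(1-\sigma)\binom{\Delta}{2}\ge\tfrac{\sigma}{2}\binom{\Delta}{2}$ non-adjacent pairs inside $N(v)$ once $\eps\le\sigma/4$; each such pair is monochromatic in colour $\gamma$ with probability $C^{-2}$, and conditionally each endpoint retains its colour with probability at least $(1-1/C)^{2\Delta}$, a positive constant; the second factorial moment of the number of such retained monochromatic pairs is $O(1)$, so Paley--Zygmund gives the claimed bound. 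Summing over the $C$ colours and using $(N_{v,\gamma}-1)^+ \ge \mathbf{1}[N_{v,\gamma}\ge 2]$ yields $\E[K_v - B_v] \ge C c_1 \ge c_2\Delta$ for a constant $c_2 = c_2(\sigma)>0$. Choosing $\eps := \min\{\sigma/4,\, c_2/2\}$ then makes $\E[K_v - B_v]$ exceed $\eps\Delta+1$ by a margin $\Omega_\sigma(\Delta)$.

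It remains to upgrade this to a high-probability statement for each $v$ and to combine the events via the Lovász local lemma. Each $K_v - B_v$ is determined by the colours of the vertices within distance $2$ of $v$, and I would use Talagrand's inequality to obtain $\Prob[K_v - B_v < \eps\Delta+1] \le \exp(-\Delta^{\Omega(1)})$ for every $v$ with $d(v)\ge C$; the one delicate point, and the main technical obstacle, is that a single colour change can flip the retention status of several vertices lying in a common neighbourhood, so Talagrand must be applied in the standard robust form (or after a preliminary exposure of a bounded number of coordinates) that accommodates this — this is by now routine given the concentration toolkit developed for such colouring procedures, but it is where the real care is needed. Finally, for each $v$ with $d(v)\ge C$ let $\mathcal{A}_v$ be the event ``$K_v - B_v < \eps\Delta+1$''. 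Since $\mathcal{A}_v$ depends only on colours within distance $2$ of $v$, it is mutually independent of all $\mathcal{A}_w$ with $\mathrm{dist}(v,w)>4$, so its dependency degree is $O(\Delta^4)$; as $e\cdot\exp(-\Delta^{\Omega(1)})\cdot(O(\Delta^4)+1) < 1$ for $\Delta$ large, the local lemma produces an assignment avoiding every $\mathcal{A}_v$, hence a partial proper colouring with $B_v + U_v < C$ for all $v$, which extends greedily as above. This gives $\chi(G)\le C=\lceil(1-\eps)\Delta\rceil\le(1-\eps/2)\Delta$ for $\Delta$ large, proving the statement with $\eps_{\ref{thm:MoResparse}}:=\eps/2$ and a suitable $\Delta_0=\Delta_0(\sigma)$.
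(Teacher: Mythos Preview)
Your proposal is correct and follows the na\"ive-colouring approach that the paper attributes to Molloy and Reed at the start of Section~\ref{sec:proof} (the paper does not give its own proof of this cited result): assign uniform colours from a palette of size roughly $(1-\eps)\Delta$, uncolour both endpoints of every monochromatic edge, use $\sigma$-sparsity to show that in expectation many colours are repeated among the retained vertices in each neighbourhood, apply Talagrand's inequality for concentration, and combine via the Lov\'asz local lemma so that the partial colouring can be completed greedily.

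One remark on the step you rightly flag as delicate. The random variable $K_v-B_v$ does not have bounded Lipschitz constant: changing the colour of a single vertex $x$ at distance $2$ from $v$ can simultaneously flip the retention status of every vertex in $N(x)\cap N(v)$ that shared the old colour of $x$, and all of these feed into the single summand $(N_{v,\gamma}-1)^+$. A direct application of Talagrand to $K_v-B_v$ therefore does not go through. Molloy and Reed's actual argument tracks a slightly different quantity --- the number of colours assigned to at least two neighbours of $v$ and retained by at least one of them --- and writes it as the count of colours assigned to at least two neighbours minus the count of colours assigned to at least two but retained by none; each piece then has Lipschitz constant $O(1)$ (a single colour change touches the contributions of at most two colours) and admits small certificates, so ordinary Talagrand applies to each separately. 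The exceptional-outcomes version of Talagrand due to Bruhn and Joos (Theorem~\ref{thm:downward certs} in this paper) would also handle your $K_v-B_v$ directly. So your sketch is sound, but filling in the concentration step requires one of these devices rather than vanilla Talagrand.
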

\noindent
Note this constitutes a constant factor improvement upon the trivial upper bound in this case.

Our work marks important progress in the quantitative optimisation of Theorem~\ref{thm:MoResparse} for $\sigma$-sparse graphs, i.e.~in pursuit of the maximum $\eps_{\ref{thm:MoResparse}}$ as a function of $\sigma$. We briskly survey the landscape prior to our work. Molloy and Reed themselves proved Theorem~\ref{thm:MoResparse} for 
$\eps_{\ref{thm:MoResparse}} \ge 0.0238\sigma$.
It was over two decades before Bruhn and Joos~\cite{BrJo18} were able to improve upon this by establishing that 
$\eps_{\ref{thm:MoResparse}} \ge 0.1827\sigma-0.0778\sigma^{3/2}$.
Soon after, Bonamy, Perrett and Postle~\cite{BPP22} improved this further through an iterative approach (that also captured the more general notions of list and correspondence colouring), and showed that  
$\eps_{\ref{thm:MoResparse}} \ge 0.3012\sigma-0.1283\sigma^{3/2}$.
Our contribution is the analysis of a different iterated colouring procedure, one that gradually releases usable colours, and is based around a random priority assignment strategy. This shows Theorem~\ref{thm:MoResparse} is true for any 
$\eps_{\ref{thm:MoResparse}} < \sigma/2 - \sigma^{3/2}/6$.
\begin{theorem}
\label{col_result}
Define $\eps_{\ref{col_result}} = \eps_{\ref{col_result}}(\sigma) = \sigma/2 - \sigma^{3/2}/6$.
For each $\iota>0$ and $0<\sigma\le1$, there is $\Delta_{\ref{col_result}}=\Delta_{\ref{col_result}}(\iota)$ such that the chromatic number satisfies $\chi(G)\le (1-\eps_{\ref{col_result}}(\sigma)+\iota)\Delta(G)$ for any $\sigma$-sparse graph $G$ with $\Delta(G)\ge\Delta_{\ref{col_result}}$.
\end{theorem}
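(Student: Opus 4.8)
The plan is to exhibit a proper colouring of $G$ with the palette $[C]$, where $C := \lceil(1-\eps_{\ref{col_result}}(\sigma)+\iota)\Delta\rceil$, by running an iterated semi-random (``nibble'') colouring procedure with a random priority assignment and then finishing greedily. Throughout, each vertex is either permanently \emph{coloured} or \emph{uncoloured}; for an uncoloured $v$ after round $t$ we record its \emph{palette} $L_t(v)\subseteq[C]$, the colours not yet appearing on a neighbour (with $\ell_t(v):=|L_t(v)|$), and its \emph{residual degree} $d_t(v)$, the number of uncoloured neighbours, starting from $L_0(v)=[C]$ and $d_0(v)=\deg_G(v)\le\Delta$. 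Since $C<\Delta$, we begin with a deficiency $d_0(v)-\ell_0(v)$ as large as roughly $\eps_{\ref{col_result}}(\sigma)\Delta$, and the whole point of the procedure is to close this gap using $\sigma$-sparsity. The governing identity is that, for a partial proper colouring, $\ell(v)-d(v)$ equals $(\ell_0(v)-d_0(v))$ plus the number of \emph{monochromatic pairs} among $N(v)$ --- pairs of (necessarily non-adjacent) neighbours of $v$ that have received the same colour --- because a monochromatic non-edge consumes only one colour of $L(v)$ while removing two vertices from the residual degree, and a colour that $v$ loses when one endpoint takes it is effectively recovered once the other, non-adjacent, endpoint also takes it. So it suffices to run the procedure for $T=T(\iota,\sigma)$ rounds in such a way that, for every $v$, at least $(\eps_{\ref{col_result}}(\sigma)-\iota/2)\Delta$ monochromatic pairs have formed in $N(v)$; then every uncoloured vertex has strictly more available colours than uncoloured neighbours and a greedy pass completes a proper $C$-colouring. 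Since $G[N(v)]$ misses at least $\sigma\binom{\Delta}{2}$ edges, there is ample raw material, and the crux is to harvest enough of it before the colouring itself exhausts it.

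In round $t+1$ every uncoloured vertex $v$ receives an independent uniform priority $\pi_v\in[0,1]$ and proposes a uniformly random colour $c_v$ from a prescribed sub-palette $L'_t(v)\subseteq L_t(v)$ whose size is capped by a parameter that grows slowly with $t$ --- this \emph{gradual release of colour} keeps $\ell_t(v)$ from running ahead of $d_t(v)$ so that freed colours are genuinely banked against the deficiency, and it tunes how fast neighbours become committed. Vertex $v$ is permanently assigned $c_v$ unless some uncoloured neighbour $u$ with $\pi_u>\pi_v$ also proposed $c_u=c_v$; thus among mutually conflicting proposals only the highest-priority vertex is lost, which is exactly what limits wastefulness, a naive version keeping a colour with probability only about $e^{-1}$, too small to reach the optimal constant. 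Conditioning on the configuration after round $t$, first-moment computations give a ``keep probability'' $p_t\approx\tfrac{1}{\rho_t}(1-e^{-\rho_t})$ with $\rho_t$ of order $d_t/|L'_t|$, strictly larger than the priority-free $e^{-\rho_t}$, whence $\E[d_{t+1}(v)]\approx(1-p_t)d_t(v)$ and
\[
\E[\ell_{t+1}(v)]\approx\ell_t(v)-p_t\,d_t(v)+M_t(v),
\]
where $M_t(v)$ is the number of newly created monochromatic pairs in $N(v)$, with $M_t(v)\gtrsim(\text{pairs of not-yet-committed non-adjacent neighbours of }v)\cdot p_t^2/|L'_t(v)|$. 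Using the sparsity hypothesis to lower-bound the surviving non-adjacent pairs, one reads off the per-round increment of $\ell_t(v)-d_t(v)$, which is exactly $M_t(v)$; optimising the release schedule, the caps $|L'_t|$, the degree bounds $\Delta_t$ and the number of rounds $T$ to maximise $\sum_t M_t(v)$ --- trading the fact that a smaller proposal palette makes collisions more likely against the fact that it also colours more slowly, against the depletion of usable non-adjacent pairs as neighbourhoods shrink, and noting a regime change once the uncommitted part of $N(v)$ has size $\Theta(\sqrt{\sigma}\,\Delta)$ --- yields an accumulated gain of at least $(\eps_{\ref{col_result}}(\sigma)-\iota/2)\Delta$, the value $\eps_{\ref{col_result}}(\sigma)=\sigma/2-\sigma^{3/2}/6$ recording the outcome of this optimisation, with the $-\sigma^{3/2}/6$ term absorbing the cost of that regime change and of the probabilistic rounding.

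To turn expectations into a statement at every vertex, one observes that $\ell_{t+1}(v)$ and $d_{t+1}(v)$ are functions of the priorities and colour proposals of the vertices within distance two of $v$, each coordinate affecting their values by $O(1)$; Talagrand's inequality then bounds the probability that either deviates from its conditional expectation by more than $\iota^2\Delta$ by $\exp(-\Omega(\Delta^{1/2}))$. The bad events for different vertices involve disjoint coordinate sets unless the vertices lie within distance four, so the dependency graph has degree polynomial in $\Delta$, and the Lov\'asz local lemma gives, with positive probability, that after each round every uncoloured $v$ satisfies $d_t(v)\le\Delta_t$ together with the intended lower bound on $\ell_t(v)-d_t(v)$. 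Iterating for the bounded number $T=T(\iota,\sigma)$ of rounds brings the gap at every uncoloured vertex to at least $(\iota/2)\Delta>0$, after which the greedy pass described above finishes, provided $\Delta\ge\Delta_{\ref{col_result}}(\iota)$ is large enough for the error terms and the local-lemma hypotheses to be satisfied.

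The main obstacle is the second step: extracting the \emph{optimal} leading coefficient $\sigma/2$ rather than merely a constant factor (as in Theorem~\ref{thm:MoResparse}). This requires that essentially every colour liberated by a monochromatic non-edge be credited against the deficiency --- forcing the gradual-release discipline so that palettes never outpace residual degrees --- and that essentially all of the $\sigma\binom{\Delta}{2}$ missing edges of each $N(v)$ be converted into monochromatic pairs before the colouring commits their endpoints incompatibly, which pins down how aggressively one may colour per round. Making the per-round count of $M_t(v)$ tight to first order in $\sigma$, controlling the compounding of the $O(\iota^2\Delta)$ concentration errors over the $T$ rounds, and checking that the sparsity surplus is not eroded prematurely (for instance because the missing edges cluster in a part of $N(v)$ that is coloured early) are the delicate points; it is precisely the balance between harvesting and destroying these non-adjacent pairs that produces the value $\eps_{\ref{col_result}}(\sigma)=\sigma/2-\sigma^{3/2}/6$.
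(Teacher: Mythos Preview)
Your high-level architecture---iterated priority-based nibble, Talagrand plus the local lemma per round, greedy finish---matches the paper's. But the sketch has real gaps exactly where the constant $\eps_{\ref{col_result}}(\sigma)=\sigma/2-\sigma^{3/2}/6$ must be produced. Your ``governing identity'' is misstated: the per-vertex gain in $\ell(v)-d(v)$ equals the number of \emph{repeats} among colours on $N(v)$ (coloured neighbours minus distinct colours used), not the number of monochromatic \emph{pairs}; these differ whenever a colour lands on three or more neighbours of $v$. This is not cosmetic, because the paper's analysis rests precisely on the inclusion--exclusion $\Prob[\bI_r\neq\varnothing]\le \E|\bI_r|-\E[P_r]+\E[T_r]$, and the $-\sigma^{3/2}/6$ comes from bounding the expected number of independent \emph{triples} in $N(v)$ via a Kruskal--Katona/Rivin bound $|\I_3|\le\sigma^{3/2}\binom{\Delta}{3}$ on triangles in the complement of $G[N(v)]$. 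Your attribution of $-\sigma^{3/2}/6$ to a ``regime change at $\Theta(\sqrt{\sigma}\,\Delta)$'' and ``probabilistic rounding'' is not how the constant arises; there is no schedule optimisation in the paper---one fixes a large $\gamma$, uses $\lceil\Delta/\gamma\rceil$ colours per nibble, and computes the ratio $\Prob[\bI_r\neq\varnothing]/\E|\bI_r|$ by explicit integration over priorities (Claims~\ref{claim:pairs} and~\ref{claim:pkeep triples}).

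Two further technical claims fail as written. The assertion that ``each coordinate affects the values by $O(1)$'' is false for the residual-degree variable: changing one vertex's proposal can flip the coloured/uncoloured status of many neighbours, so plain Talagrand does not apply; the paper uses the Bruhn--Joos exceptional-outcomes inequality (Theorem~\ref{thm:downward certs}) with a $\log\Delta$ Lipschitz constant after excluding outcomes in which a colour appears too often locally. And you do not address how $\sigma$-sparsity persists across rounds, which is required to re-apply the per-round lemma. The paper secures this by proving that the residual subgraph is $\mu$-\emph{quasirandom} (all pairwise neighbourhood intersections scale by $\mu$ up to $\sqrt{\Delta}\log^5\Delta$), which via Lemma~\ref{lem:maintain-sparsity} keeps the sparsity at $\sigma-o(1)$. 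Without this ingredient the iteration is unjustified: merely controlling degrees and list sizes does not prevent the residual neighbourhoods from becoming dense.
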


\noindent
In the densest cases, as $\sigma\to0$, the leading coefficient $1/2$ in the expression for $\eps_{\ref{col_result}}$ is best possible, as certified by the following simple construction, cf.~also~\cite[Ex.~10.1]{MoRe02}.

\begin{proposition}\label{prop:sharp}
For each $\Delta\ge 1$ and  $\sigma>0$, there is a $\sigma$-sparse graph $G^\Delta_\sigma$ of maximum degree $\Delta$ such that as $\sigma\to0$ its chromatic number satisfies
\(
1-\chi(G^\Delta_\sigma)/\Delta = \sigma/2+O(\sigma^2).
\)
\end{proposition}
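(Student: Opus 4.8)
The plan is to realise $G^\Delta_\sigma$ as a Cartesian product of two complete graphs (a ``rook's graph''). Let $c$ be the least positive integer with $c(\Delta-c)\ge\sigma\binom\Delta2$ --- for $\sigma$ small this forces $c\le\Delta/2$, which is all we need --- put $q:=\Delta+1-c$, and set $G^\Delta_\sigma:=K_q\mathbin{\square}K_{c+1}$. Concretely, $V=\{0,\dots,q-1\}\times\{0,\dots,c\}$ and $(a,i)\sim(b,j)$ if and only if ($i=j$ and $a\ne b$) or ($i\ne j$ and $a=b$); equivalently $G^\Delta_\sigma$ is $c+1$ disjoint cliques $F_0,\dots,F_c$ of order $q$, glued pairwise along the identity perfect matching. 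Each vertex has $q-1$ neighbours inside its own clique and one in each of the other $c$ cliques, so $\Delta(G^\Delta_\sigma)=q-1+c=\Delta$. The graph is vertex-transitive, so $\sigma$-sparseness can be checked on a single neighbourhood: $N((a,i))$ is the disjoint union of $F_i\setminus\{(a,i)\}$ (a clique of order $q-1$) and the ``row'' $\{(a,j):j\ne i\}$ (a clique of order $c$), with no edge between the two parts since such vertices agree in neither coordinate. Thus a neighbourhood spans $\binom{q-1}2+\binom c2$ edges, and the identity $\binom\Delta2-\binom{q-1}2-\binom c2=(q-1)c=(\Delta-c)c$, combined with the choice of $c$, shows a neighbourhood spans at most $(1-\sigma)\binom\Delta2$ edges. (Equivalently, the missing edges in each neighbourhood form a copy of $K_{q-1,c}$.)

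I would then compute $\chi$. Since $c\le\Delta/2<q$, a single $F_i$ is a largest clique, giving $\chi(G^\Delta_\sigma)\ge q$; conversely, colouring $(a,i)$ by $a+i\bmod q$ is proper, because two vertices of one $F_i$ have $a\ne b$ and two matched vertices $(a,i),(a,j)$ have $i\not\equiv j\pmod q$ (as $0\le i,j\le c<q$). So $\chi(G^\Delta_\sigma)=q=\Delta+1-c$ and $1-\chi(G^\Delta_\sigma)/\Delta=(c-1)/\Delta$. It then remains to estimate $c$: since $t\mapsto t(\Delta-t)$ increases on $[0,\Delta/2]$, $c$ is the ceiling of the smaller root $c_\ast=\tfrac\Delta2\bigl(1-\sqrt{1-2\sigma(\Delta-1)/\Delta}\,\bigr)$ of $t(\Delta-t)=\sigma\binom\Delta2$; Taylor-expanding the square root gives $c_\ast=\tfrac{\sigma(\Delta-1)}2+O(\sigma^2\Delta)$, hence $c=\tfrac{\sigma\Delta}2+O(\sigma^2\Delta)+O(1)$ and $1-\chi(G^\Delta_\sigma)/\Delta=\tfrac\sigma2+O(\sigma^2)+O(1/\Delta)$.

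The only subtle point --- and the closest thing to an obstacle --- is the error term: the $O(1/\Delta)$ (coming from the ceiling and the $-1$) must be absorbed into $O(\sigma^2)$, which forces $\Delta$ to be large compared with $\sigma^{-2}$, so the estimate should be read in the regime where $\Delta\to\infty$ as $\sigma\to0$ (for $\sigma$ bounded away from $0$ the assertion carries no content, so nothing is lost there). Apart from that, everything reduces to elementary bookkeeping --- verifying that the chosen $c$ is a positive integer with $c<q$ once $\sigma$ is small, and invoking the standard fact $\chi(K_m\mathbin{\square}K_n)=\max(m,n)$ --- with no real difficulty.
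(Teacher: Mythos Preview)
Your rook's-graph construction $K_q\mathbin{\square}K_{c+1}$ is correct and delivers the same leading term $\sigma/2$. The paper takes a more direct route: it lets $G^\Delta_\sigma$ be a clique of size $k=\lfloor\sqrt{1-\sigma}\,\Delta\rfloor$ with $\Delta+1-k$ pendant leaves attached to each clique vertex. Every neighbourhood then spans at most $\binom{k-1}{2}\le(1-\sigma)\binom{\Delta}{2}$ edges, the chromatic number is exactly $k$, and $1-k/\Delta = 1-\sqrt{1-\sigma}+O(1/\Delta)=\sigma/2+O(\sigma^2)$ by a one-line Taylor expansion. Your Cartesian product has the pleasant feature of being vertex-transitive and having no low-degree vertices, but it costs you a quadratic to solve for $c$, the (easy but extra) fact $\chi(K_m\mathbin{\square}K_n)=\max(m,n)$, and some ceiling bookkeeping; the pendant-leaf construction sidesteps all of that because the clique size is chosen directly to make the neighbourhood edge count right. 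Both constructions share the $O(1/\Delta)$ rounding artefact you flag, and the paper likewise silently absorbs it into the $O(\sigma^2)$ by implicitly letting $\Delta$ grow --- so your caveat about the regime is apt and applies equally to the original.
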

\begin{proof}
Let $G^\Delta_\sigma$ consist of a clique of size $\min\{1,\lfloor\sqrt{1-\sigma}\cdot\Delta\rfloor\}$ with $\Delta+1-\min\{1,\lfloor\sqrt{1-\sigma}\cdot\Delta\rfloor\}$ vertices of degree one appended to each vertex in the clique. It is trivial to verify that this graph has maximum degree $\Delta$, is $\sigma$-sparse, and has chromatic number $\min\{1,\lfloor\sqrt{1-\sigma}\cdot\Delta\rfloor\}$. The conclusion follows from a Taylor expansion of $1-\sqrt{1-\sigma}$ at $\sigma=0$.
\end{proof}

\noindent
Obtaining sharpness of this accuracy is considered uncommon for such Ramsey-type problems.
On the other hand, in the sparser cases as $\sigma\to1$ one might hope for a guarantee on $\eps_{\ref{thm:MoResparse}}(\sigma)$ that approaches $1$, since Johansson's result for triangle-free graphs implies for $\sigma=1$ that Theorem~\ref{thm:MoResparse} holds for any $\eps_{\ref{thm:MoResparse}}(1) < 1$. Thus there is still room for improvement, since the methods we have employed here only imply for that special case that Theorem~\ref{thm:MoResparse} holds for any $\eps_{\ref{thm:MoResparse}}(1) < 1/3$.

Nevertheless the improved bound of Theorem~\ref{col_result} yields state-of-the-art bounds in two well-known and longstanding conjectures, namely the Erd\H{o}s--Ne\v{s}et\v{r}il conjecture and Reed's conjecture.
In Subsections~\ref{sub:ErdosNesetril} and~\ref{sub:Reed}, we discuss these consequences.

Moreover, a light adaptation of our methods can handle another basic, but stronger notion of local sparsity, namely, that the neighbourhoods induce subgraphs of at most a certain maximum degree. This yields a bound of similar sharpness to Theorem~\ref{col_result}, which may be considered as first positive progress towards a conjecture of Vu~\cite{Vu02}. We state this result and discuss its context in Subsection~\ref{sub:Vu}.

The proof of Theorem~\ref{col_result} %, which is quite technical and involved, 
is provided in full in Section~\ref{sec:proof}.
For the convenience of the reader, we have prefaced Section~\ref{sec:proof} with a succinct overview of the ideas and methods.

\subsubsection*{Structure of the paper}

Subsections~\ref{sub:ErdosNesetril},~\ref{sub:Reed} and~\ref{sub:Vu} describe the background to the Erd\H{o}s--Ne\v{s}et\v{r}il, Reed's and Vu's conjectures, respectively, and the progress we obtain either directly through Theorem~\ref{col_result} or through its adaptation.
Subsection~\ref{sub:prelim} lists some notation and tools we use.
We give an outline of the proof of Theorem~\ref{col_result} at the beginning of Section~\ref{sec:proof}.
The remainder of Section~\ref{sec:proof} provides the full proof.
In Section~\ref{sec:applications}, we give further details of the two direct applications of Theorem~\ref{col_result} as well as the adaptation of Theorem~\ref{col_result} towards Vu's conjecture.
In Appendix~\ref{sec:talagrand}, we include some auxiliary argumentation necessary for our concentration inequality applications.

\subsection{A step towards the Erd\H{o}s--Ne\v{s}et\v{r}il conjecture}\label{sub:ErdosNesetril}

Given a graph $G$, an {\em induced matching} is a subset $M$ of the edges of $G$ such that for any pair $e,e'$ of distinct edges of $M$, neither $e$ and $e'$ are incident nor are any of the four possible edges between an endpoint of $e$ and an endpoint of $e'$ present in $G$.
A {\em strong edge-colouring} of $G$ is a partition of its edge set $E(G)$ into induced matchings of $G$.
The strong chromatic index $\xs(G)$ of $G$ is the least number of parts needed in any strong edge-colouring of $G$. Equivalently, $\xs(G)$ is the chromatic number $\chi(L(G)^2)$ of the square $L(G)^2$ of the line graph $L(G)$ of $G$. (The square of a graph is obtained from the graph itself by adding edges between all pairs of distinct nonadjacent vertices that are connected by a two-edge path.)
In the 1980s (cf.~\cite{Erd88}), Erd\H{o}s and Ne\v{s}et\v{r}il proposed the problem of bounding $\xs(G)$ in terms of the maximum degree $\Delta(G)$ of $G$. Since the maximum degree $\Delta(L(G)^2)$ of the square of the line graph of $G$ is at most $2\Delta(G)(\Delta(G)-1)$, the strong chromatic index is trivially bounded by $\xs(G)\le 2\Delta(G)^2-2\Delta(G)+1$. They conjectured something much stronger.

\begin{conj}[Erd\H{o}s and Ne\v{s}et\v{r}il, cf.~\cite{Erd88}]\label{conj:ErNe}
The strong chromatic index satisfies $\xs(G)\le 1.25\Delta(G)^2$ for all $G$.
\end{conj}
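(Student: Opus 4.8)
Since the final statement is the Erd\H{o}s--Ne\v{s}et\v{r}il conjecture itself, what follows is a programme for attacking it, together with an honest account of where it stalls. The natural starting point is $\xs(G)=\chi(L(G)^2)$ together with $\Delta(L(G)^2)\le 2\Delta(G)(\Delta(G)-1)$, which gives the trivial bound $\xs(G)\le 2\Delta(G)^2$. To do better one quantifies the local sparsity of $L(G)^2$: for an edge $e=uv$ of $G$, the $L(G)^2$-neighbourhood of $e$ consists of the edges of $G$ meeting $\{u,v\}\cup N(u)\cup N(v)$, and two of these are \emph{non}-adjacent in $L(G)^2$ exactly when they lie at distance at least two in $L(G)$. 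A careful analysis of this structure --- the thread running from Molloy and Reed~\cite{MoRe97} through Bruhn and Joos~\cite{BrJo18} and Bonamy, Perrett and Postle~\cite{BPP18+} up to the present paper --- shows that $L(G)^2$ is $\sigma$-sparse for a suitable absolute constant $\sigma>0$, with $\sigma$ close to $1$ when $\Delta(L(G)^2)$ is close to $2\Delta(G)^2$ (the locally tree-like regime) and small only when the neighbourhoods are clique-like, hence of small degree in $L(G)^2$. Feeding this into Theorem~\ref{col_result} (the near-clique neighbourhoods, on which only about $\tfrac12\Delta(G)^2$ colours are needed, being harmless) yields the bound $\xs(G)\le 1.772\,\Delta(G)^2$ announced in the abstract.

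The first obstacle to the conjectured constant $1.25$ is that a uniform appeal to $\sigma$-sparseness is intrinsically too weak: the coefficient $\eps_{\ref{col_result}}(\sigma)=\sigma/2-\sigma^{3/2}/6$ increases in $\sigma$ but never exceeds $\eps_{\ref{col_result}}(1)=1/3$, and this ceiling $1/3$ bounds what any currently known general colouring procedure extracts from $\sigma$-sparseness as $\sigma\to1$ (cf.\ the remark after Theorem~\ref{thm:MoResparse} contrasting this with the value $1$ that Johansson~\cite{Joh96} and Molloy~\cite{Mol19} obtain for genuinely triangle-free graphs). So even if one could replace $\eps_{\ref{col_result}}$ by the largest reduction achievable through the present machinery, the locally tree-like regime would still only give $\xs(G)\le\tfrac43\Delta(G)^2$, above $1.25\,\Delta(G)^2$. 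The conjectured constant is, by contrast, tight: the balanced blow-up of a five-cycle is $\Delta(G)$-regular with $\tfrac54\Delta(G)^2$ edges, its square-of-line-graph is complete, and there it is the trivial bound $\chi(L(G)^2)\le\Delta(L(G)^2)+1$ that is sharp.

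The plan I would pursue is therefore a quantitative dichotomy tailored to squares of line graphs rather than a uniform sparsity input. Writing $\Delta(L(G)^2)=c\,\Delta(G)^2$ with $c\in(0,2]$, the case $c\le 1.25$ is done by the trivial bound, so one is reduced to $c\in(1.25,2]$, where a colour reduction of a factor $1-1.25/c$ relative to $\Delta(L(G)^2)$ suffices. I would prove that this regime forces rich simultaneous structure --- a good lower bound $\sigma(c)$ on the sparsity of $L(G)^2$ with $\sigma(c)\to1$ as $c\to2$, together with control on $\omega(L(G)^2)$ --- and then exploit sparsity \emph{and} clique structure jointly, in the spirit of the Reed-type bound $\chi\le\lceil 0.881(\Delta+1)+0.119\omega\rceil$ established in this paper, to break the $1/3$ barrier precisely where it is needed. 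Near the remaining boundary $c\downarrow 1.25$ one would prove a stability statement pinning the local picture of $G$ close to a blow-up of $C_5$ and then colour directly, using the resulting rigidity.

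The principal obstacle is that breaking the $1/3$ reduction barrier for $\sigma$-sparse graphs --- even when one is additionally handed a large clique, which is exactly the situation for $L(G)^2$ --- is the kind of improvement that has so far resisted every method, including the random-priority procedure of this paper; I expect that no variant of the present argument can cross it alone, and that a genuinely new colouring idea, of Johansson--Molloy strength in the sparse-but-clustered regime, is required. Compounding this, a self-contained treatment of the near-extremal case would in particular resolve the conjecture for small $\Delta$, where it remains open for every $\Delta\ge4$. Accordingly, closing the gap from $1.772\,\Delta(G)^2$ down to the conjectured $1.25\,\Delta(G)^2$ is the main --- and still unresolved --- difficulty.
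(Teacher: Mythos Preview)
The statement you were asked to prove is Conjecture~\ref{conj:ErNe}, the Erd\H{o}s--Ne\v{s}et\v{r}il conjecture. The paper does \emph{not} prove this statement; it is explicitly presented as an open conjecture, and the paper's contribution toward it is Theorem~\ref{strong_bound} ($\xs(G)\le 1.772\Delta(G)^2$), with the authors themselves remarking that ``the hypothetically optimal determination $\eps_{\ref{thm:MoRestrong}}=0.75$ remains far from reach.'' There is therefore no proof in the paper to compare your attempt against.

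You correctly recognise this and, rather than pretending to a proof, give an honest programme and obstacle analysis. That analysis is broadly accurate and consistent with the paper: the route via $\sigma$-sparsity of $L(G)^2$ combined with Theorem~\ref{col_result} (and the degeneracy refinement of Bonamy--Perrett--Postle, Theorem~\ref{thm:BPP18+}) is exactly how Theorem~\ref{strong_bound} is obtained; the ceiling $\eps_{\ref{col_result}}(1)=1/3$ is noted in the paper just after Proposition~\ref{prop:sharp}; the $C_5$ blow-up as the conjectured extremal example is stated right after Conjecture~\ref{conj:ErNe}. Your proposed dichotomy in the parameter $c=\Delta(L(G)^2)/\Delta(G)^2$ and the idea of combining sparsity with clique control are reasonable speculative directions, but they go beyond anything the paper attempts and, as you yourself acknowledge, would require genuinely new ideas to execute.

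In short: there is no gap to name in your proposal because you do not claim a proof, and the paper does not contain one either. Your assessment that the conjecture remains open and out of reach of the paper's methods is correct.
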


\noindent
(See~\cite{JKP19,CaKa19} for a fascinating strengthened, yet essentially equivalent, form of this conjecture.)
If true, this bound would be exact for a suitable blow-up of the 5-edge cycle (in the $\Delta(G)$ even case).
It was more than a decade before a breakthrough by Molloy and Reed~\cite{MoRe97} yielded some absolute constant $\eps>0$ such that $\xs(G)\le (2-\eps)\Delta(G)$ for all $G$.
More specifically, they proved the following statement.

\begin{theorem}[Molloy and Reed~\cite{MoRe97}]\label{thm:MoRestrong}
There is some $\eps_{\ref{thm:MoRestrong}}>0$ and some $\Delta_0$ such that the strong chromatic index satisfies $\xs(G)\leq (2-\eps_{\ref{thm:MoRestrong}})\Delta(G)^2$ for any graph $G$ with $\Delta(G)\ge \Delta_0$.
\end{theorem}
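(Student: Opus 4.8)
The plan is to derive Theorem~\ref{thm:MoRestrong} as a corollary of Theorem~\ref{thm:MoResparse} by showing that, for any graph $G$, the square of its line graph $L(G)^2$ is $\sigma$-sparse for some absolute constant $\sigma>0$. Once this is established, we simply apply Theorem~\ref{thm:MoResparse} to $H:=L(G)^2$, whose maximum degree $\Delta(H)$ is at most $2\Delta(G)^2-2\Delta(G)$, to conclude that $\xs(G)=\chi(H)\le(1-\eps_{\ref{thm:MoResparse}}(\sigma))\Delta(H)\le(1-\eps_{\ref{thm:MoResparse}}(\sigma))\cdot 2\Delta(G)^2$, giving the claimed bound with $\eps_{\ref{thm:MoRestrong}}=2\eps_{\ref{thm:MoResparse}}(\sigma)$ (and $\Delta_0$ chosen so that $\Delta(H)$ exceeds the threshold from Theorem~\ref{thm:MoResparse}, i.e.\ $\Delta_0$ of order $\sqrt{\Delta_0^{\ref{thm:MoResparse}}}$).

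The substance of the argument is therefore the sparsity estimate. First I would fix an edge $e=xy$ of $G$ and describe the neighbourhood $N_H(e)$ in $H$: it consists of all edges of $G$ sharing a vertex with $e$ (the edges at $x$ or at $y$), together with all edges at distance one from $e$, i.e.\ edges $f=uv$ with $u$ adjacent to $x$ or $y$. I would then bound the number of edges of $H$ inside $N_H(e)$, i.e.\ pairs $f,f'\in N_H(e)$ that are adjacent in $H$ (equal or at distance at most one in $L(G)$), and show this count is strictly below $(1-\sigma)\binom{\Delta(H)}{2}$. The key structural observation is that the edges incident to $x$ form a clique in $H$ of size at most $\Delta(G)$, likewise for $y$, and more generally the "local" edge-sets around each vertex of $N_G(x)\cup N_G(y)$ are cliques; but crucially many \emph{pairs} of edges in $N_H(e)$ lie far apart in $G$ and so are non-adjacent in $H$. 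Counting these guaranteed non-adjacent pairs — essentially pairs of edges $f$ at a neighbour $u_1$ of $x$ and $f'$ at a neighbour $u_2$ of $y$ that are themselves non-adjacent and not joined — yields a quadratic-in-$\Delta(G)$ deficit, which relative to $\binom{\Delta(H)}{2}=\Theta(\Delta(G)^4)$ gives a constant $\sigma>0$. (In fact one can quote here the well-known folklore bound that every edge of $L(G)^2$ lies in few-enough cliques; this is exactly the mechanism by which Molloy and Reed obtain their result.)

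The main obstacle is the bookkeeping in this counting step: the neighbourhood $N_H(e)$ decomposes into several overlapping families (edges at $x$, edges at $y$, edges at a common neighbour of $x$ and $y$, edges at a private neighbour of $x$, etc.), and to get a clean constant one must carefully classify pairs $\{f,f'\}$ according to how $f$ and $f'$ attach to this structure, identify which classes are \emph{forced} to be non-edges of $H$, and ensure no double-counting of pairs that are edges. Degenerate configurations — vertices of small degree, many common neighbours of $x$ and $y$, dense local structure — must be checked not to spoil the bound; the worst case should be when $G$ is essentially a disjoint union of two large cliques glued along the edge $e$, and even there a positive-fraction deficit survives. I would organise the proof by first reducing to the case where $G$ is $\Delta$-regular (adding pendant edges does not decrease $\xs$ and can only help sparsity of the square of the line graph), then performing the pair-count with $\Delta(G)=\Delta$ exactly, so that all the binomial coefficients are explicit and the extraction of $\sigma$ is a single short computation rather than an asymptotic estimate.
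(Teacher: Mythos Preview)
Your overall strategy is correct and matches precisely the approach the paper attributes to Molloy and Reed: establish that $L(G)^2$ is $\sigma$-sparse for some absolute $\sigma>0$, then invoke Theorem~\ref{thm:MoResparse}. The paper does not reprove Theorem~\ref{thm:MoRestrong} in detail; it simply records this two-step decomposition and moves on to its own sharper bound (Theorem~\ref{strong_bound}).

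There is, however, an arithmetic slip in your sketch of the sparsity step that, as written, makes the argument fail. You claim the count of guaranteed non-adjacent pairs in $N_H(e)$ ``yields a quadratic-in-$\Delta(G)$ deficit, which relative to $\binom{\Delta(H)}{2}=\Theta(\Delta(G)^4)$ gives a constant $\sigma>0$.'' A deficit of order $\Delta(G)^2$ against a denominator of order $\Delta(G)^4$ gives $\sigma=\Theta(\Delta(G)^{-2})\to 0$, not a constant. What you actually need, and what the counting you describe actually produces, is a deficit of order $\Delta(G)^4$: there are $\Theta(\Delta(G))$ choices for a neighbour $u_1$ of $x$, $\Theta(\Delta(G))$ choices for a neighbour $u_2$ of $y$, and for each such pair $\Theta(\Delta(G)^2)$ pairs of edges $(f,f')$ with $f$ at $u_1$ and $f'$ at $u_2$; a constant fraction of these lie at distance greater than two in $G$, yielding $\Theta(\Delta(G)^4)$ non-edges of $H$ inside $N_H(e)$. (The paper later cites the sharp form of this count due to Bruhn and Joos: at most $1.5\Delta(G)^4+5\Delta(G)^3$ edges inside $N_H(e)$, hence $\sigma\ge 1/4-o(1)$.) Your description of the mechanism is right; only the stated order of the deficit is wrong, so the conclusion ``gives a constant $\sigma$'' does not follow from what you wrote.
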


\noindent
We may bound the absolute constant $\eps>0$ mentioned just above by comparing the bound of Theorem~\ref{thm:MoRestrong} with the trivial bound on $\xs(G)$ when $\Delta(G)<\Delta_0$.
Molloy and Reed proved that $\eps_{\ref{thm:MoRestrong}} \ge 0.001$.
A key insight they made in their proof of Theorem~\ref{thm:MoRestrong} was to split the task into two separate subtasks, first, showing for some absolute constant $\sigma>0$ that $L(G)^2$ is $\sigma$-sparse for any graph $G$, and, second, showing a nontrivial improvement on the trivial colouring bound under the assumption of $\sigma$-sparsity, i.e.~Theorem~\ref{thm:MoResparse}.
Bruhn and Joos~\cite{BrJo18} were the first to revisit this problem, and they not only significantly improved on the estimate of $\eps_{\ref{thm:MoResparse}}$ in Theorem~\ref{thm:MoResparse} as mentioned earlier, but also proved an asymptotically extremal lower bound on $\sigma>0$ such that $L(G)^2$ is $\sigma$-sparse for all $G$. In this way, they obtained that $\eps_{\ref{thm:MoRestrong}} \ge 0.070$.
The more recent work of Bonamy {\em et al.}~\cite{BPP22} obtained further improvements. As mentioned earlier, they improved the estimate of $\eps_{\ref{thm:MoResparse}}$ in Theorem~\ref{thm:MoResparse} through an iterative approach. They were moreover able to improve on the separation into two subtasks, by showing better sparsity on a subgraph of $L(G)^2$ according to a degeneracy-type argument. Through this, they obtained that $\eps_{\ref{thm:MoRestrong}} \ge 0.165$.
By combining Theorem~\ref{col_result} with this last-mentioned method, we derive that $\eps_{\ref{thm:MoRestrong}} \ge 0.228$. The proof is given in Subsection~\ref{sub:ErdosNesetrilproof}.

\begin{theorem}
\label{strong_bound}
There is some $\Delta_0$ such that the strong chromatic index satisfies $\xs(G)\leq 1.772\Delta(G)^2$ for any graph $G$ with $\Delta(G)\ge \Delta_0$.
\end{theorem}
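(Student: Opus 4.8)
The plan is to run the two-phase strategy of Molloy--Reed, as sharpened by Bruhn--Joos and Bonamy--Perrett--Postle, but with Theorem~\ref{col_result} as the sparse-colouring engine in place of their weaker bounds. Recall $\xs(G)=\chi(L(G)^2)$ and $\Delta(L(G)^2)\le 2\Delta(G)^2-2\Delta(G)$, so the trivial bound uses $\sim 2\Delta(G)^2$ colours and the target $1.772\Delta(G)^2$ amounts to beating this by a factor $0.886$. \emph{Phase~1 (structural).} Run the degeneracy-type argument of Bonamy, Perrett and Postle: processing the edges of $G$ in a suitable order, it produces fixed constants $\gamma>0$ and $\sigma>0$ and a graph $H$ such that (i) $H$ is $\sigma$-sparse, i.e.\ every neighbourhood in $H$ spans at most $(1-\sigma)\binom{\Delta(H)}{2}$ edges, (ii) $\Delta(H)\le(2-\gamma+o(1))\Delta(G)^2$, and (iii) any proper colouring of $H$ extends to a strong edge-colouring of $G$ using only $o(\Delta(G)^2)$ further colours, the few deferred constraints being coloured greedily at the end thanks to the bounded back-degree built into the ordering. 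The structural argument producing $(\gamma,\sigma)$ and $H$ is theirs; since our sparse-colouring bound differs from the one they apply to $H$, we take the pair $(\gamma,\sigma)$ that optimises the \emph{final} estimate, which need not coincide with their choice.

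\emph{Phase~2 (colouring and arithmetic).} Feed $H$ to Theorem~\ref{col_result} with a small parameter $\iota$. As $H$ is $\sigma$-sparse with $\Delta(H)\le(2-\gamma+o(1))\Delta(G)^2$, this yields $\chi(H)\le(1-\eps_{\ref{col_result}}(\sigma)+\iota)\Delta(H)\le\bigl((2-\gamma)(1-\eps_{\ref{col_result}}(\sigma))+o(1)\bigr)\Delta(G)^2$, and, adding the $o(\Delta(G)^2)$ completion colours from Phase~1, $\xs(G)\le\bigl((2-\gamma)(1-\eps_{\ref{col_result}}(\sigma))+o(1)\bigr)\Delta(G)^2$. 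It then remains to verify the numerical inequality $(2-\gamma)(1-\eps_{\ref{col_result}}(\sigma))<1.772$ for the optimal $(\gamma,\sigma)$ (recalling $\eps_{\ref{col_result}}(\sigma)=\sigma/2-\sigma^{3/2}/6$); this is where all of the gain over the earlier bound $\eps_{\ref{thm:MoRestrong}}\ge0.165$ resides, and it is made possible precisely because $\eps_{\ref{col_result}}$ dominates the sparse-colouring bounds of Bruhn--Joos and of Bonamy--Perrett--Postle. Finally pick $\Delta_0$ large enough to absorb the $o(1)$.

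The bulk of the work, and the only genuine obstacle, is reconciling the output of Phase~1 with the hypothesis of Theorem~\ref{col_result}. The delicate point is that $\sigma$-sparsity there is measured against the \emph{global} maximum degree $\Delta(H)$, whereas neighbourhood sizes in $L(G)^2$ --- and hence in $H$ --- vary widely with the degrees of the endpoints of the corresponding edge of $G$; one must check that the edges whose neighbourhoods in $H$ approach $\Delta(H)$ are exactly those that are genuinely sparse, so that the uniform hypothesis of Theorem~\ref{col_result} is met, or else first reduce to a near-$\Delta(G)$-regular instance without inflating $\xs(G)$. One must also confirm that the constraints deferred by the degeneracy ordering number $o(\Delta(G)^2)$ and not a positive fraction of $\Delta(G)^2$, as otherwise the saving would be eroded; keeping this lower-order term genuinely lower-order all the way to the end is exactly the role of the degeneracy ordering. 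Once these points are in place, Phase~2 and the closing computation are routine.
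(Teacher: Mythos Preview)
Your high-level plan is exactly the paper's: combine the Bonamy--Perrett--Postle structural reduction with Theorem~\ref{col_result}. But you have mischaracterised what that reduction actually does, and the error propagates into an incorrect final inequality.

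The BPP argument (Theorem~\ref{thm:BPP18+}) does \emph{not} produce a subgraph of reduced maximum degree $(2-\gamma)\Delta(G)^2$. Rather, with $H=L(G)^2$ and a target $\eps$, it takes $F\subseteq V(H)$ maximal with $\delta(H[F])\ge\lceil(2-\eps)\Delta(G)^2\rceil$; the maximum degree of $H[F]$ is still $\sim 2\Delta(G)^2$. What improves is the \emph{sparsity}: every neighbourhood in $H[F]$ spans at most $\bigl(\tfrac{31}{6}-\tfrac{128}{3(10-3\eps)}+4\eps-\eps^2\bigr)\Delta(G)^4$ edges, which is $\sigma$-sparsity measured against $\binom{2\Delta(G)^2}{2}$ for an explicit $\sigma=\sigma(\eps)$.

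Consequently there is no ``$\gamma$'' parameter and no $o(\Delta(G)^2)$ completion budget. Vertices outside $F$ are handled by degeneracy with the \emph{same} $\lceil(2-\eps)\Delta(G)^2\rceil$ colours: if $\chi(H[F])\le\lceil(2-\eps)\Delta(G)^2\rceil$ then $\chi(H)\le\lceil(2-\eps)\Delta(G)^2\rceil$, since any minimal larger obstruction would itself have minimum degree $\ge\lceil(2-\eps)\Delta(G)^2\rceil$, contradicting the maximality of $F$. The correct closing inequality is therefore
\[
(1-\eps_{\ref{col_result}}(\sigma(\eps))+\iota)\cdot 2\Delta(G)^2\le(2-\eps)\Delta(G)^2,
\]
i.e.\ $\eps\le 2\eps_{\ref{col_result}}(\sigma(\eps))-2\iota$, which the paper verifies at $\eps=0.228$, $\sigma=0.277$, $\iota\le0.0004$. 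Both of your self-flagged worries then dissolve: sparsity is already measured against $\Delta(H[F])\sim 2\Delta(G)^2$, so the hypothesis of Theorem~\ref{col_result} is met verbatim; and the deferred constraints cost no additional colours at all.
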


\noindent
We humbly agree that the above sequence of improvements on estimates for $\eps_{\ref{thm:MoRestrong}}$ suggests that the hypothetically optimal determination $\eps_{\ref{thm:MoRestrong}}=0.75$ remains far from reach. Even a proof of $\eps_{\ref{thm:MoRestrong}}$ being $0.75$ might leave open the nontrivial task of proving Conjecture~\ref{conj:ErNe} for all graphs with maximum degree less than $\Delta_0$. Despite sustained and considerable efforts, so far it has only been established for graphs of maximum degree at most $3$~\cite{And92,HQT93}.

\subsection{A step towards Reed's conjecture}\label{sub:Reed}

Another Ramsey-type problem (perhaps even closer to quantitative Ramsey theory) asks the following.
\begin{quote}\em
What is the best upper bound on the chromatic number $\chi$ for graphs of given maximum  degree $\Delta$ and given clique number $\omega$?
\end{quote}

\noindent
An aforementioned result of Johansson~\cite{Joh96} has settled this question up to a constant multiple as $\Delta\to\infty$ when $\omega=2$. Already the case $\omega=3$ is open and difficult. This is closely related to an important conjecture of Ajtai, Erd\H{o}s, Koml\'os and Szemer\'edi~\cite{AEKS81}. For $\omega$ asymptotically smaller than $\Delta$ (as $\Delta\to\infty$), the current best bounds were recently obtained in~\cite{DKPS20+}, improving upon another important result of Johansson~\cite{Joh96b} (as well as recent improvements, e.g.~in~\cite{Mol19}) by a constant factor. Again, here we will be mostly concerned with a `denser' regime, namely, when $\omega$ is linear in $\Delta$. Related to this, Reed proposed an evocative conjecture.

\begin{conj}[Reed~\cite{Ree98}]\label{conj:Ree98}
The chromatic number satisfies $\chi(G) \le \lceil \frac12(\omega(G)+\Delta(G)+1)\rceil$ for any graph $G$.
\end{conj}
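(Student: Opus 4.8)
The final statement is Reed's conjecture, which remains open; I therefore describe the route by which Theorem~\ref{col_result} yields quantitative progress on Conjecture~\ref{conj:Ree98}, and I indicate precisely where this route stops short of the constant $\frac12$. The plan follows Reed's classical sparse/dense dichotomy. One first reduces to $\Delta(G)\ge\Delta_{\ref{col_result}}$, since for smaller $\Delta$ the conjecture is a separate finite problem that this method does not touch. Fix a threshold $\sigma_0\in(0,1]$ and call $v\in V(G)$ \emph{sparse} if $G[N(v)]$ spans at most $(1-\sigma_0)\binom{\Delta}{2}$ edges and \emph{dense} otherwise; write $S$ for the set of sparse vertices and $D=V(G)\setminus S$. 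The structural core is Reed's dense-decomposition lemma: for $\sigma_0$ a sufficiently small absolute constant, $G[D]$ partitions into vertex-disjoint ``almost-cliques'' $D_1,\dots,D_k$, each with $|D_i|\le\Delta+1$, with every vertex of $D_i$ having at most $c(\sigma_0)\Delta$ non-neighbours inside $D_i$ and at most $c(\sigma_0)\Delta$ neighbours outside $D_i$, where $c(\sigma_0)\to 0$ as $\sigma_0\to 0$, and with $\omega(G[D_i])\le\omega(G)$.

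I would colour the almost-cliques first. Within $D_i$ two non-adjacent vertices may receive a common colour, so applying Hall's theorem to the complement of $G[D_i]$ --- which is sparse by the almost-clique property --- produces a matching in that complement missing at most $\omega(G[D_i])$ vertices; identifying its edges yields a proper colouring of $G[D_i]$ using at most $\frac12(|D_i|+\omega(G[D_i]))+O(c(\sigma_0)\Delta)\le\frac12(\Delta+1+\omega)+O(c(\sigma_0)\Delta)$ colours. To reconcile the colourings of different dense sets one draws each palette from a common ground set and, since each $D_i$ has external degree at most $c(\sigma_0)\Delta$, applies the Lov\'asz local lemma to permute the palettes so that no edge between two distinct dense sets becomes monochromatic.

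Next I would colour the sparse vertices. Deleting vertices only removes edges from neighbourhoods, so $G[S]$ is $\sigma_0$-sparse; running the colouring procedure behind Theorem~\ref{col_result} globally, with the dense sets' colours treated as a partial pre-colouring (or with their palettes reserved), the aim is that each sparse vertex finishes with more admissible colours than its still-uncoloured degree. Here the crucial input is that Theorem~\ref{col_result} already quantifies the colour saving forced by the $(1-\sigma_0)\binom{\Delta}{2}$ bound on neighbourhood edges, namely a reduction by $\eps_{\ref{col_result}}(\sigma_0)\Delta=(\sigma_0/2-\sigma_0^{3/2}/6)\Delta$. The total number of colours is then, up to lower-order terms and the $O(c(\sigma_0)\Delta)$ slack from the almost-cliques, the larger of $\frac12(\Delta+1+\omega)$ and $(1-\eps_{\ref{col_result}}(\sigma_0))\Delta$, and one optimises over $\sigma_0$ and over how much the two palettes are allowed to overlap.

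The main obstacle --- and the reason this plan falls short of the constant $\frac12$ --- is exactly this final optimisation. When $\omega=\Delta+1-o(\Delta)$ the dense bound is already $\frac12(\Delta+1+\omega)+o(\Delta)$, the conjectured value, and nothing is lost; the whole difficulty lies in the regime where $\omega$ is far below $\Delta$, where one needs $(1-\eps_{\ref{col_result}}(\sigma_0))\Delta$ to sit below $\frac12(\Delta+1+\omega)$, i.e.~one needs $\eps_{\ref{col_result}}(\sigma_0)$ close to $\frac12$. But the decomposition lemma forces $\sigma_0$ to be a small absolute constant, for which Theorem~\ref{col_result} delivers only $\eps_{\ref{col_result}}(\sigma_0)=\sigma_0/2-\sigma_0^{3/2}/6\ll\frac12$, while at the same time the error $c(\sigma_0)\Delta$ pushes $\sigma_0$ downward, so that a genuine double bind appears; and even if the decomposition survived up to the triangle-free threshold $\sigma_0=1$, Theorem~\ref{col_result} would still yield only $\eps_{\ref{col_result}}(1)=1/3$, whereas Johansson's theorem shows the true value there is $1$. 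Closing the remaining gap seems to require two genuinely new ingredients: a sparse-colouring bound with $\eps(\sigma)$ essentially optimal across the whole range of $\sigma$ (Proposition~\ref{prop:sharp} shows the leading coefficient $\frac12$ is correct as $\sigma\to 0$, so the obstruction is the lower-order terms and the moderate-$\sigma$ behaviour), and a dense-decomposition mechanism valid for $\sigma_0$ not bounded away from $1$. Absent these, I would expect the plan above to establish Conjecture~\ref{conj:Ree98} only with a constant bounded well away from $\frac12$ (of order $0.1$), and to leave the full statement open.
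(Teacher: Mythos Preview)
You correctly identify that Conjecture~\ref{conj:Ree98} is open, so there is no proof in the paper to compare against; the paper's partial progress is Theorem~\ref{Reed_bound}. Your sketched route toward that progress, however, differs substantially from the paper's. You propose Reed's original sparse/dense dichotomy: decompose into almost-cliques plus a globally $\sigma_0$-sparse remainder, colour the cliques via Hall-type matchings in the complements, and patch things together with the local lemma. The paper instead avoids any structural decomposition. It first invokes Reed's Theorem~\ref{thm:Ree98} to dispose of the case $\omega(G)\ge(1-\eps_{\ref{thm:Ree98}})\Delta$, then reduces to a $\lceil(1-\zeta)(\Delta+1)\rceil$-critical graph with $\omega(G)\le\iota(\Delta+1)$, and applies the Delcourt--Postle result (Theorem~\ref{thm:DePo17}) to conclude that the \emph{entire} graph is $\sigma$-sparse with $\sigma\approx(1-2\zeta)^2/2$. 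Theorem~\ref{col_result} is then applied once, globally, and one solves $\zeta\le\eps_{\ref{col_result}}((1-2\zeta)^2/2)$ to obtain $\zeta=0.119$.

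The practical difference is significant. Your decomposition forces $\sigma_0$ to be a small absolute constant (so that $c(\sigma_0)$ is small enough for the almost-clique structure), and hence the saving $\eps_{\ref{col_result}}(\sigma_0)\approx\sigma_0/2$ on the sparse side is correspondingly small; you correctly flag this as the bottleneck. The criticality-plus-Delcourt--Postle route bypasses it entirely: criticality with small $\omega$ forces \emph{every} neighbourhood to be sparse with $\sigma$ of order a constant near $0.29$, so Theorem~\ref{col_result} is applied in a regime where its saving is substantially larger. Your approach is closer in spirit to Reed's original argument and could in principle be pushed through, but the paper's reduction is both shorter and yields a better constant precisely because it feeds Theorem~\ref{col_result} a larger sparsity parameter.
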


\noindent
In other words, he asked if the chromatic number $\chi(G)$ of a graph $G$ is always at most the average, rounded up, of the trivial lower bound, $\omega(G)$, and the trivial upper bound, $\Delta(G)+1$, for $\chi(G)$.
If true, the bound is sharp, for instance, for the Chv\'atal graph. The bound is trivially true when $\omega(G)\ge\Delta(G)$ and it follows from Brooks' theorem~\cite{Bro41} for $\omega(G)=\Delta(G)-1$. 
In~\cite{DKPS20+}, it was shown that, if $\omega(G) \le \Delta(G)^c$ for some fixed $c<1/100$, then the bound holds provided $\Delta(G)$ is sufficiently large. (There is some room in the method there to increase the constant $1/100$ slightly, but not above $1/16$ without additional ideas.)
Curiously, despite Johansson's result, the conjecture is still open in the special case $\omega(G)=2$, particularly for small values of $\Delta(G)$.
As evidence towards his conjecture, Reed succeeded in proving, through a lengthy set of arguments that are probabilistic in nature, the following.

\begin{theorem}[Reed~\cite{Ree98}]\label{thm:Ree98}
There is some $\eps_{\ref{thm:Ree98}}>10^{-8}$ and some $\Delta_{\ref{thm:Ree98}}$ such that the chromatic number satisfies $\chi(G) \le \frac12(\omega(G)+\Delta(G)+1)$ for any graph $G$ satisfying $\omega(G) \ge (1-\eps_{\ref{thm:Ree98}})\Delta(G)$ and $\Delta(G)\ge \Delta_{\ref{thm:Ree98}}$.
\end{theorem}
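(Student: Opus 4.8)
Write $P=\frac12(\omega+\Delta+1)$ for the target palette size. Since $\omega\ge(1-\eps)\Delta$ we have $P\ge(1-\tfrac{\eps}{2})\Delta+\tfrac12$, so against the greedy bound $\Delta+1$ one must save roughly $\tfrac{\eps}{2}\Delta$ colours, and the whole difficulty is that this saving has to be realised \emph{everywhere at once}. A direct appeal to Theorem~\ref{col_result} cannot work: the only $\sigma$-sparsity one can force on $G$ globally (apply Tur\'an inside the neighbourhood of a vertex of the maximum clique) is of order $1/\Delta\to0$, far too weak. The plan is instead the classical route for Reed's conjecture when $\omega$ is near $\Delta$: isolate the genuine sparsity via a structural decomposition into near-cliques plus a sparse remainder, colour the two pieces by different means, and keep a little slack in reserve to stitch them.

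\textbf{Decomposition.} Call a vertex $v$ \emph{dense} if $G[N[v]]$ spans at least $(1-\eta)\binom{\Delta+1}{2}$ edges for a suitable $\eta=\eta(\eps)\to0$ --- note this forces $\deg(v)=(1-O(\eta))\Delta$ and makes $N[v]$ a near-clique --- and \emph{sparse} otherwise. I would show the dense vertices split into \emph{dense sets} $D_1,\dots,D_k$ with $(1-O(\eta))\Delta\le|D_i|\le\Delta+1$, each $v\in D_i$ having only $O(\eta\Delta)$ non-neighbours inside $D_i$ and at most $\Delta+1-|D_i|+O(\eta\Delta)$ neighbours outside it; the maximum clique lies in a single $D_i$, and here the hypothesis that $\omega$ is within $\eps\Delta$ of $\Delta$ is exactly what keeps the decomposition ``clean'', i.e.\ stops a vertex from being torn between two near-cliques. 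Move into a set $S$ all sparse vertices together with the few vertices of each $D_i$ having anomalously many non-neighbours in $D_i$; then each surviving $D_i$ is a near-clique of bounded complement degree, and every $v\in S$ has $\Omega(\eps)\binom{\Delta}{2}$ non-edges in its (global) neighbourhood.

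\textbf{Colouring.} First colour $S$ from a reservoir of $(1-c\eps)\Delta<P$ colours by a random partial colouring analysed via the Lov\'asz local lemma and Talagrand-type concentration, exactly in the spirit of the $\sigma$-sparse machinery of the present paper: the abundance of non-edges in each $N(v)$, $v\in S$, makes it likely that a colour is ``wasted'' inside $N(v)$, so a bounded number of rounds colours $S$ within $(1-c\eps)\Delta$ colours, the gap to $P$ being kept in reserve. Then colour each $D_i$: if $|D_i|\le P$, give it $|D_i|$ private colours; if $|D_i|>P$ --- so $|D_i|>P\ge\omega$ and $D_i$ is therefore not a clique --- one shows, using the $\ge\omega(|D_i|-\omega)$ non-edges forced in $D_i$ together with a greedy matching bound, that $D_i$ contains a matching of at least $|D_i|-P$ non-edges, identifies each such pair in colour, and colours the contracted near-clique within $P$ colours; this is in effect a rounding of the fractional bound $\chi_f(G)\le P$. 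Finally, resolve the interface: a vertex of $S$ adjacent to $D_i$ has only $O(\eta\Delta)$ neighbours there and can be shifted off those colours using the reserved slack, while the bounded number of edges between distinct dense sets is fixed by local recolouring.

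\textbf{The main obstacle.} The crux --- and the reason the extractable constant is only $\eps_{\ref{thm:Ree98}}>10^{-8}$ rather than anything near the conjectural $\tfrac12$ --- is the decomposition together with its interface. One must build it so that \emph{every} dense vertex is unambiguously assigned to a single near-clique despite $O(\eta\Delta)$ errors per vertex; so that the pruned-out vertices genuinely behave sparsely and feed back correctly into the colouring of $S$; and so that the reservoir slack $c\eps$, the sparsity threshold $\eta$, the pruning thresholds, and the dependency degrees in the local-lemma step are all mutually consistent. Forcing every one of these error terms to close is what drives $\eps$, and hence the saving, to be taken minuscule.
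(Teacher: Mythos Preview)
The paper does not contain a proof of this statement. Theorem~\ref{thm:Ree98} is quoted verbatim as a result of Reed~\cite{Ree98} and is used only as a black box in the proof of Theorem~\ref{Reed_bound} (to dispose of the case $\omega(G)\ge(1-\eps_{\ref{thm:Ree98}})\Delta(G)$ before applying Theorems~\ref{col_result} and~\ref{thm:DePo17} to the complementary range). There is therefore nothing in the present paper to compare your proposal against.

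That said, your outline is a fair high-level sketch of Reed's original argument: a dense/sparse decomposition into near-cliques and a sparse remainder, probabilistic colouring of the sparse part via local-lemma methods, matching-based savings inside the near-cliques, and careful bookkeeping at the interface. Two caveats. First, the phrase ``exactly in the spirit of the $\sigma$-sparse machinery of the present paper'' is misleading here: Reed's 1998 proof predates and does not use the iterative nibble of Section~\ref{sec:proof}; it relies on a single-round na\"ive colouring analysis, and the present paper's contribution (Theorem~\ref{col_result}) is not needed for Theorem~\ref{thm:Ree98}. Second, your sketch is genuinely only a sketch: the steps ``resolve the interface'' and ``local recolouring'' hide most of the work in Reed's paper, and the claim that the maximum clique lies in a single $D_i$, while morally right, requires care. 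None of this is a flaw for the purposes of the current review, since the paper itself simply cites the result.
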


\noindent
Note that this statement implies that some (barely) nontrivial convex combination of $\omega(G)$ and $\Delta(G)+1$ suffices as an upper bound for $\chi(G)$.

\begin{cor}\label{cor:Ree98}
There is some $\eps_{\ref{cor:Ree98}}\ge \eps_{\ref{thm:Ree98}}/2$ and some $\Delta_0$ such that the chromatic number satisfies  $\chi(G) \le \lceil (1-\eps_{\ref{cor:Ree98}})(\Delta(G)+1)+\eps_{\ref{cor:Ree98}}\omega(G)\rceil$ for any graph $G$ with $\Delta(G)\ge \Delta_0$.
\end{cor}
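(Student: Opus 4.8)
The plan is to take $\eps_{\ref{cor:Ree98}}:=\eps_{\ref{thm:Ree98}}/2$ and $\Delta_0:=\Delta_{\ref{thm:Ree98}}$, to write $\Delta:=\Delta(G)$, $\omega:=\omega(G)$, $\eps:=\eps_{\ref{cor:Ree98}}$ for brevity, and to distinguish two regimes according to how close $\omega$ is to $\Delta$. In the regime $\omega\ge(1-\eps_{\ref{thm:Ree98}})\Delta$, Theorem~\ref{thm:Ree98} applies verbatim to $G$ (since also $\Delta\ge\Delta_{\ref{thm:Ree98}}$) and gives $\chi(G)\le\tfrac12(\omega+\Delta+1)$; using $\omega\le\Delta+1$ and $\eps\le\tfrac12$ one then computes
\[
(1-\eps)(\Delta+1)+\eps\omega-\tfrac12(\omega+\Delta+1)=\bigl(\tfrac12-\eps\bigr)(\Delta+1-\omega)\ge0,
\]
so $\chi(G)\le(1-\eps)(\Delta+1)+\eps\omega\le\lceil(1-\eps)(\Delta+1)+\eps\omega\rceil$ and this regime is done.

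The regime $\omega<(1-\eps_{\ref{thm:Ree98}})\Delta$ is where the only real idea is required, because here the target is of order $(1-\eps)\Delta$, so neither the trivial bound $\chi(G)\le\Delta+1$ nor Brooks' theorem~\cite{Bro41} gives anything useful. I would handle it by padding $G$ into the previous regime: set $k:=\lceil(1-\eps_{\ref{thm:Ree98}})\Delta\rceil$ and let $G'$ be the disjoint union of $G$ with a clique $K_k$ on $k$ fresh vertices. Then $k\le\Delta$, so $\Delta(G')=\Delta$; and $\omega<(1-\eps_{\ref{thm:Ree98}})\Delta\le k$ forces $\omega(G')=k\ge(1-\eps_{\ref{thm:Ree98}})\Delta(G')$, so Theorem~\ref{thm:Ree98} applies to $G'$ and gives
\[
\chi(G)\le\chi(G')\le\tfrac12(k+\Delta+1)<\tfrac12\bigl((1-\eps_{\ref{thm:Ree98}})\Delta+1+\Delta+1\bigr)=(1-\eps)\Delta+1.
\]
As $\chi(G)$ is an integer this forces $\chi(G)\le\lceil(1-\eps)\Delta\rceil$, and since $(1-\eps)\Delta\le(1-\eps)(\Delta+1)+\eps\omega$ we get $\chi(G)\le\lceil(1-\eps)(\Delta+1)+\eps\omega\rceil$, completing this regime as well. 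In both regimes $\eps=\eps_{\ref{thm:Ree98}}/2$, which proves the statement with $\eps_{\ref{cor:Ree98}}=\eps_{\ref{thm:Ree98}}/2$.

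I do not expect a genuine obstacle here; the entire point is the observation that attaching a sufficiently large clique carries a graph whose clique number is far below $\Delta$ into the range covered by Theorem~\ref{thm:Ree98} without changing its maximum degree, after which only bookkeeping with ceilings and the inequality $\omega\le\Delta+1$ remains. The one thing to keep an eye on is the choice of $k$: it must be large enough to meet Reed's near-equality hypothesis and small enough (namely $k\le\Delta$) not to inflate the maximum degree of $G'$, and $k=\lceil(1-\eps_{\ref{thm:Ree98}})\Delta\rceil$ achieves both for every $\Delta\ge\Delta_{\ref{thm:Ree98}}$.
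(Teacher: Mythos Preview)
Your proof is correct. The paper states this corollary without proof, treating it as an immediate consequence of Theorem~\ref{thm:Ree98}; your two-regime argument, with the clique-padding trick to push a small-$\omega$ graph into the range where Theorem~\ref{thm:Ree98} applies, is exactly the standard derivation one has in mind here, and all the bookkeeping with ceilings and the choice $k=\lceil(1-\eps_{\ref{thm:Ree98}})\Delta\rceil$ is handled cleanly.
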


\noindent
Reed himself made little effort to optimise the value of $\eps_{\ref{cor:Ree98}}$, but noted that it cannot be more than $1/2$ by a standard probabilistic construction.
Bonamy {\em et al.}~\cite{BPP22} recently revisited this problem and showed $\eps_{\ref{cor:Ree98}} > 0.038$. Delcourt and Postle~\cite{DePo17} have announced that $\eps_{\ref{cor:Ree98}} > 0.076$. One consequence of Theorem~\ref{col_result}, combined with a claimed result of~\cite{DePo17}, is an improvement on these estimates, in particular, that $\eps_{\ref{cor:Ree98}} \ge 0.119$. The proof is given in Subsection~\ref{sub:Reedproof}.

\begin{theorem}
\label{Reed_bound}
There is some $\Delta_0$ such that the chromatic number satisfies $\chi(G) \le \lceil 0.881(\Delta(G)+1)+0.119\omega(G)\rceil$ for any graph $G$ with $\Delta(G)\ge \Delta_0$.
\end{theorem}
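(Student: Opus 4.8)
The plan is to reduce the problem to the ``dense'' regime where $\omega(G)$ is very close to $\Delta(G)$ --- the regime covered by Theorem~\ref{thm:Ree98} and Corollary~\ref{cor:Ree98} --- and to handle the complementary ``sparse'' regime with Theorem~\ref{col_result}, patching the two together and invoking the announced result of Delcourt and Postle~\cite{DePo17} to get the clean constant $0.119$. Fix the target convex-combination parameter $\eps_{\ref{cor:Ree98}} = 0.119$, and suppose $G$ has maximum degree $\Delta=\Delta(G)$ sufficiently large and clique number $\omega=\omega(G)$. First I would dispose of the trivial boundary cases: if $\omega\ge\Delta$ the bound $\chi(G)\le\Delta+1\le\lceil(1-\eps)(\Delta+1)+\eps\omega\rceil$ holds since the right-hand side is at least $\Delta+1$; more generally Brooks' theorem settles $\omega=\Delta-1$. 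So one may assume $\omega\le\Delta-1$, and the content is to interpolate linearly between the two endpoints $\omega\approx\Delta$ and $\omega$ bounded away from $\Delta$.

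The heart of the argument is a dichotomy on the parameter $\sigma$ defined by $\omega = (1-\sigma)\Delta$ (roughly). If $\sigma$ is small --- say $\sigma\le\sigma_0$ for a threshold $\sigma_0$ to be chosen --- then $\omega$ is within a $(1-\sigma_0)$-factor of $\Delta$, and one appeals to Delcourt and Postle's claimed strengthening of Theorem~\ref{thm:Ree98}/Corollary~\ref{cor:Ree98}, which guarantees $\chi(G)\le\lceil(1-\eps')(\Delta+1)+\eps'\omega\rceil$ for $\eps'>0.076$ (and, as I would use it, for the entire range $\omega\ge(1-\sigma_0)\Delta$ with $\eps'$ possibly depending on $\sigma_0$): since $0.119>0.076$ and increasing the convex weight on the smaller term $\omega$ only decreases the bound when $\omega\le\Delta+1$, this yields the desired inequality with coefficient $0.119$ whenever $\sigma\le\sigma_0$. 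If instead $\sigma>\sigma_0$, the graph is $\sigma_0$-sparse in the sense required by Theorem~\ref{col_result} --- this is the standard observation (going back to Reed) that a graph with $\omega(G)\le(1-\sigma)\Delta(G)$ has every neighbourhood spanning at most $(1-\sigma^2+o(1))\binom{\Delta}{2}$ edges, because a neighbourhood with too many edges forces (by Turán-type/Ramsey counting, or by a direct greedy argument) a large clique; hence $G$ is $\sigma'$-sparse for some $\sigma'=\sigma'(\sigma_0)>0$ bounded away from zero. Then Theorem~\ref{col_result} gives $\chi(G)\le(1-\eps_{\ref{col_result}}(\sigma')+\iota)\Delta\le(1-c)\Delta$ for an absolute constant $c=c(\sigma_0)>0$, and one checks that $(1-c)\Delta\le\lceil 0.881(\Delta+1)+0.119\,\omega\rceil$ for all $\omega\ge0$ once $\Delta$ is large, since the right-hand side is at least $0.881(\Delta+1)$ and $0.881>1-c$ provided $c$ (equivalently $\sigma_0$) is chosen large enough.

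The delicate point is the choice of the threshold $\sigma_0$ and verifying that the two regimes genuinely overlap with room to spare, i.e.\ that one can simultaneously (i) keep $\sigma_0$ small enough that Delcourt--Postle applies throughout $\omega\ge(1-\sigma_0)\Delta$ with $\eps'>0.119$ --- here one must be careful about whether their announced bound $\eps'>0.076$ is a fixed constant valid only near $\omega=\Delta$ or degrades as $\omega$ drops, and if the latter, re-run the interpolation more carefully, possibly shrinking the effective $0.119$ or combining with a third intermediate argument --- and (ii) keep $\sigma_0$ large enough that in the complementary range $\sigma>\sigma_0$ the sparsity constant $\sigma'$, and hence $\eps_{\ref{col_result}}(\sigma')$, is large enough to beat $0.119$ via $(1-\eps_{\ref{col_result}}(\sigma'))\le 0.881$. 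Thus the main obstacle is a bookkeeping optimisation: tuning $\sigma_0$ (and the auxiliary $\iota$ in Theorem~\ref{col_result}) so that the worst case of the two regimes still lands under $\lceil 0.881(\Delta+1)+0.119\,\omega\rceil$, together with carefully tracking how the clique-number hypothesis converts into a quantitative $\sigma'$-sparsity bound on neighbourhoods --- the Ramsey/Turán counting step --- with explicit enough constants to certify that $0.119$ (rather than some smaller number) is attainable. The parameter $\Delta_0$ is then taken as the maximum of the two thresholds arising from Theorem~\ref{col_result} and from~\cite{DePo17}.
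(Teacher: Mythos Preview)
Your proposal has a genuine gap in the ``sparse'' regime. The claimed observation that $\omega(G)\le(1-\sigma)\Delta$ forces every neighbourhood to span at most $(1-\sigma^2+o(1))\binom{\Delta}{2}$ edges is false. Tur\'an's theorem only says that a $\Delta$-vertex graph with no $K_{r+1}$ has at most $(1-1/r)\binom{\Delta}{2}$ edges, and here $r\approx(1-\sigma)\Delta$, so the resulting sparsity parameter is $O(1/\Delta)$, which vanishes as $\Delta\to\infty$. Concretely, the complete $(\Delta/2)$-partite graph with parts of size~$2$ has clique number $\Delta/2$ but $\binom{\Delta}{2}-\Delta/2$ edges. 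Thus a clique-number bound alone yields no usable local edge sparsity, and Theorem~\ref{col_result} cannot be invoked as you describe. There is also a secondary error in your ``dense'' regime: the Delcourt--Postle input you cite is $\eps_{\ref{cor:Ree98}}>0.076$, which is a \emph{weaker} bound than the target $0.119$; you have the direction of the convex-combination implication reversed, so this case is not covered either.

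The paper's route differs precisely at this point. It does not use the announced constant $0.076$ from~\cite{DePo17} at all; instead it uses a structural result of Delcourt and Postle (stated as Theorem~\ref{thm:DePo17}) asserting that a $\lceil(1-\eps)(\Delta+1)\rceil$-\emph{critical} graph with $\omega\le(1-\alpha)(\Delta+1)$ is $(1-\alpha/2-\eps)(\alpha-2\eps)$-sparse. Criticality is what purchases the sparsity that the clique-number hypothesis alone cannot. The paper first disposes of the range $\omega\ge(1-\eps_{\ref{thm:Ree98}})\Delta$ via Reed's Theorem~\ref{thm:Ree98} (where the full conjecture holds), then passes to a critical subgraph and, by monotonicity of the sparsity expression in $\alpha$, reduces to $\omega\le\iota(\Delta+1)$. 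Applying Theorem~\ref{thm:DePo17} with $\eps=0.119$ and $\alpha=1-\iota$ yields $\sigma\approx(1-2\cdot0.119)^2/2\approx0.290$, for which $\eps_{\ref{col_result}}(\sigma)\approx0.119$, contradicting criticality.
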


\subsection{A step towards Vu's conjecture}\label{sub:Vu}

Recall that the {\em codegree} of two vertices $u,v$ is the number of distinct neighbours in common to both $u$ and $v$.
One further Ramsey-type problem asks the following.
\begin{quote}\em
What is the best upper bound on the chromatic number $\chi$ for graphs of given maximum degree $\Delta$ and given maximum codegree $\codeg$?
\end{quote}
If we moreover restrict our attention to codegree taken among pairs of endpoints, then this problem yet again concerns the colouring of graphs of bounded local density, for $\codeg$ is an upper bound on the maximum degree in any neighbourhood subgraph.
Thus having a bounded codegree condition is stronger than having bounded local edge density.
Our interest is in $\codeg$ being some nontrivial proportion of $\Delta$, say, $\codeg=(1-\cosigma)\Delta$ for some fixed $0<\cosigma<1$.
For the case $\cosigma=1$ Johansson's theorem~\cite{Joh96} resolves this question up to a constant multiple as $\Delta\to\infty$, while the trivial upper bound corresponds to the case $\cosigma=0$.
Because of its relationship to an important result of Kahn on the list chromatic index of linear hypergraphs~\cite{Kah96} and in turn to the Erd\H{o}s--Faber--Lov\'asz conjecture (cf.~\cite{Erd81}), Vu~\cite{Vu02} essentially proposed the following~\footnote{In fact, he posed it in a stronger form in terms of list colouring, but noted the analogous statement for independence number (which is weaker than the statement of Conjecture~\ref{conj:Vu}) is also open, as remains the case to this day.}.

\begin{conj}[Vu~\cite{Vu02}]\label{conj:Vu}
For each $\iota>0$ and $0\le\cosigma\le1$ there is $\Delta_0$ such that the chromatic number satisfies $\chi(G) \le (1-\cosigma+\iota)\Delta$ for any graph $G$ with maximum codegree at most $(1-\cosigma)\Delta(G)$ and $\Delta(G) \ge \Delta_0$.
\end{conj}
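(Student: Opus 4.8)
The target is the full bound $\chi(G)\le(1-\cosigma+\iota)\Delta$, which is genuinely stronger than what a direct adaptation of Theorem~\ref{col_result} delivers: the codegree hypothesis says precisely that every edge of $G$ lies in at most $(1-\cosigma)\Delta$ triangles, and although this forces $G$ to be (essentially) $\cosigma$-sparse, Theorem~\ref{col_result} would then only yield roughly $(1-\cosigma/2+\cosigma^{3/2}/6+\iota)\Delta$. The extra gain has to come from structure the edge-count does not see: the $\cosigma$-sparse extremal graph of Proposition~\ref{prop:sharp} is a clique on about $\sqrt{1-\cosigma}\,\Delta\approx(1-\cosigma/2)\Delta$ vertices with pendants, and such a clique has codegree about $(1-\cosigma/2)\Delta>(1-\cosigma)\Delta$ — so the codegree hypothesis \emph{excludes} the worst sparse example and, in the extremal regime, pushes each neighbourhood towards being a disjoint union of cliques, each of order at most $(1-\cosigma)\Delta$. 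The plan is to exploit this structurally rather than through edge counts.

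First I would pass to the fractional relaxation via the hard-core model. Running $\mu_\lambda$ on $G$ at a fugacity $\lambda$ of order $1/((1-\cosigma)\Delta)$, the goal is a \emph{uniform local occupancy} estimate: $\Prob_{I\sim\mu_\lambda}[v\in I]\ge(1-o(1))/((1-\cosigma)\Delta)$ for every $v$. Any such estimate, valid at all vertices, turns $\mu_\lambda$ into a fractional colouring — put weight $\mu_\lambda(I)/\min_v\Prob[v\in I]$ on each independent set $I$ — and hence gives $\chi_f(G)\le(1-\cosigma+o(1))\Delta$. Establishing the occupancy estimate is the place where the codegree hypothesis must be used in full: the natural route is a convexity argument of the kind developed for the hard-core model on sparse graphs (cf.~\cite{DKPS20+,DKPS20+b}), expressing $\Prob[v\in I]$ as an average, over the trace of $I$ outside $N[v]$, of a decreasing function of the number of uncovered neighbours, and then showing that having at most $(1-\cosigma)\Delta$ triangles per edge pins the worst case down to a disjoint union of cliques of order $\approx(1-\cosigma)\Delta$ inside $N(v)$ — for which the occupancy is computed directly, since at large fugacity the sampled set picks essentially one vertex per clique and the occupancy at $v$ is then $\approx 1/((1-\cosigma)\Delta)$, as required.

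To upgrade $\chi_f\le(1-\cosigma+o(1))\Delta$ to the same bound on $\chi$, I would run the iterated colouring procedure underlying Theorem~\ref{col_result}, with its random priority assignment, but with the whole bookkeeping rewritten around the codegree parameter: in each round a colour is ``lost'' from a vertex's palette only through a repeated colour on an edge or a conflict along a triangle, both rare under the codegree bound. The essential modification is to seed each round not with a uniformly random partial colouring but with one correlated according to the hard-core measure above, so that colours get reused \emph{across} the sparse cliques comprising $N(v)$. This is exactly the structured colouring that a uniform colouring fails to produce: a uniform colouring of a disjoint union of $k$ cliques of size $\Delta/k$ uses $\approx\Delta$ distinct colours and extracts no gain, whereas reusing a common palette of size $\Delta/k\le(1-\cosigma)\Delta$ across the cliques is what realizes the full $\cosigma$ saving rather than only $\cosigma/2$.

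The step I expect to be the real obstacle — and the genuine content of Conjecture~\ref{conj:Vu} — is precisely this last point: every ``one-shot'' or purely local argument I know of (a near-perfect matching in the complement of $G[N(v)]$, a single round of random colouring, a naive nibble, or the occupancy bound without the convexity input) stalls at $(1-\cosigma/2)\Delta$, because the codegree being $\Theta(\Delta)$ rather than $o(\Delta)$ makes the usual Talagrand/martingale concentration and quasirandomness estimates degrade by exactly the factor we are trying to remove. Closing the gap seems to require maintaining the hard-core correlation globally and consistently across \emph{all} rounds — in effect a coupled sampling that is never ``spent'' — or replacing the probabilistic iteration by a direct entropy-compression or counting argument that tracks triangles rather than edges. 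I would expect the two endpoints to remain special: at $\cosigma\to1$ Johansson's theorem~\cite{Joh96} already gives $o(\Delta)$, far below the conjectured $\iota\Delta$, while at $\cosigma\to0$ the clique $K_{\Delta+1}$ shows the bound cannot be improved.
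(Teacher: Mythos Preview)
The statement you are attempting to prove is Conjecture~\ref{conj:Vu}, which the paper does \emph{not} prove --- it is presented as an open problem, and the paper's contribution is the partial result Theorem~\ref{Vu_bound}, giving $\eps_{\ref{Vu_bound}}(\cosigma)=\cosigma+o(\cosigma)$ as $\cosigma\to0$ rather than the full $\cosigma$ for all $\cosigma$. There is therefore no ``paper's own proof'' to compare against.

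Your proposal is not a proof either, and you essentially say so yourself in the final paragraph. Two concrete gaps. First, the uniform local occupancy estimate $\Prob_{I\sim\mu_\lambda}[v\in I]\ge(1-o(1))/((1-\cosigma)\Delta)$ is asserted but not established: the convexity reduction you describe, pinning the worst case to a disjoint union of cliques of size $(1-\cosigma)\Delta$ inside $N(v)$, is not justified, and in fact the fractional chromatic number version of Vu's conjecture (which this would immediately yield) appears to be open as well. Second, the upgrade from $\chi_f$ to $\chi$ is left as ``seed each round with a colouring correlated according to the hard-core measure'' and ``maintain the hard-core correlation globally across all rounds'', which is a wish rather than a procedure --- no mechanism is given for preserving the correlation through the uncolouring and residual steps, and you correctly note that all known local/one-shot arguments stall at $(1-\cosigma/2)\Delta$. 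What you have written is a reasonable research plan identifying where the difficulty lies, but it does not close the gap between Theorem~\ref{Vu_bound} and Conjecture~\ref{conj:Vu}.
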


\noindent
This bound if true would be asymptotically best possible as certified, for instance, by a clique of size $\lfloor (1-\cosigma)\Delta\rfloor+1$.
Vu was mainly interested in Conjecture~\ref{conj:Vu} in the sparse regime with $\cosigma$ close to $1$. On the other hand, here we make marked progress in the dense regime with $\cosigma$ close to $0$. By adapting the proof method for Theorem~\ref{col_result}, we show the following.

\begin{theorem}\label{Vu_bound}
Define $\eps_{\ref{Vu_bound}} = \eps_{\ref{Vu_bound}}(\cosigma) = \max\{ \cosigma/(1+2\cosigma) - (2\cosigma)^{3/2},\eps_{\ref{col_result}}(\cosigma)\}$.
For each $\iota>0$ and $0\le\cosigma\le1$, there is $\Delta_{\ref{Vu_bound}}=\Delta_{\ref{Vu_bound}}(\iota)$ such that the chromatic number satisfies $\chi(G)\le (1-\eps_{\ref{Vu_bound}}(\cosigma)+\iota)\Delta(G)$ for any graph $G$ with  maximum codegree at most $(1-\cosigma)\Delta(G)$ and $\Delta(G)\ge\Delta_{\ref{Vu_bound}}$.
\end{theorem}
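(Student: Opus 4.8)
The plan is to derive the bound in two pieces, taking the maximum of two independent arguments, which explains the $\max$ in the definition of $\eps_{\ref{Vu_bound}}$. For the second term $\eps_{\ref{col_result}}(\cosigma)$ there is nothing new to do: if the maximum codegree of $G$ is at most $(1-\cosigma)\Delta$, then in particular every neighbourhood subgraph $G[N(v)]$ has maximum degree at most $(1-\cosigma)\Delta$, hence at most $\tfrac12(1-\cosigma)\Delta\cdot\Delta \le (1-\cosigma)\binom{\Delta}{2}(1+o(1))$ edges, so $G$ is (essentially) $\cosigma$-sparse and Theorem~\ref{col_result} applies directly, absorbing the lower-order slack into the error term $\iota$. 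This already handles the regime where $\cosigma$ is not too small, and it is why the theorem is never worse than Theorem~\ref{col_result}.

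The substance is the first term, $\cosigma/(1+2\cosigma) - (2\cosigma)^{3/2}$, which dominates as $\cosigma\to0$ (there $\cosigma/(1+2\cosigma)\sim\cosigma$, beating $\cosigma/2$). The idea is to run essentially the same iterated random-priority colouring procedure used to prove Theorem~\ref{col_result}, but to re-examine the key ``wastage'' estimate under the stronger codegree hypothesis rather than the weaker edge-density hypothesis. In the analysis of Theorem~\ref{col_result}, when a vertex $v$ attempts a colour $c$, the gain comes from pairs of neighbours of $v$ that are themselves adjacent and that both select $c$: such a pair can be assigned $c$ simultaneously, effectively ``saving'' a colour for $v$. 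Under $\cosigma$-sparsity one lower-bounds the number of such edges in $G[N(v)]$ by $\sigma\binom{\Delta}{2}$ and the expected saving is governed by the second moment of the edge count. Under the codegree bound one instead argues vertex-by-vertex inside $N(v)$: for each $u\in N(v)$, the neighbours of $u$ inside $N(v)$ number at most the codegree $(1-\cosigma)\Delta$, so $u$ has at least $\cosigma\Delta$ non-neighbours inside $N(v)$ (counting $v$-neighbours not adjacent to $u$); summing and dividing by two, $G[N(v)]$ has at least $\tfrac12\cosigma\Delta^2$ non-edges, but more importantly the codegree control is \emph{local to each colour class}, which is what lets the denominator improve from $2$ to $1+2\cosigma$. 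Concretely, I would track, for each uncoloured vertex $v$ at each round, a potential $\ell_v$ (a ``local list-size surplus'') and show it shrinks by a controlled multiplicative factor; the codegree hypothesis is used to show that when $v$ retains colour $c$ tentatively, the number of neighbours of $v$ that keep $c$ and must be recoloured is, in expectation, smaller by a factor involving $\cosigma$ than in the generic case, because any two such neighbours that are adjacent can share the colour. The bookkeeping is: expected colours released per round $\approx$ (palette size) $\times$ (fraction of $v$'s neighbourhood that is ``$\cosigma$-independent'') and iterating this geometric decrease over $\Theta(\log(1/\iota))$ rounds yields the claimed $\cosigma/(1+2\cosigma)$ leading behaviour, with the $(2\cosigma)^{3/2}$ correction coming exactly as the analogous $\sigma^{3/2}/6$-type term does in Theorem~\ref{col_result} (loss from concentration slack and from the discrepancy between first and second moments of the relevant counts).

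Two routine-but-necessary ingredients: (i) Talagrand-type concentration for the random variables counting retained colours and surviving neighbours at each round — here one must verify the Lipschitz/certifiability conditions, and the codegree bound is convenient because it caps how many coordinates a single colour choice can influence (this is exactly the kind of auxiliary argument deferred to Appendix~\ref{sec:talagrand}); and (ii) the Lovász Local Lemma to glue the per-vertex conclusions into a global colouring, with dependency degree polynomial in $\Delta$ and failure probability super-polynomially small in $\Delta$, which forces the ``$\Delta$ sufficiently large'' hypothesis and the dependence $\Delta_{\ref{Vu_bound}}=\Delta_{\ref{Vu_bound}}(\iota)$.

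The main obstacle I expect is pinning down the precise constant in the denominator, i.e.\ showing one really gets $1+2\cosigma$ and not merely $1+c\cosigma$ for some unspecified $c$. This requires a careful second-moment computation of the number of monochromatic edges produced inside a neighbourhood, where the codegree bound must be used not just as an upper bound on degrees in $G[N(v)]$ but to control the \emph{correlations} between the events ``$u$ keeps $c$'' and ``$u'$ keeps $c$'' for $u,u'\in N(v)$ — and simultaneously one must make sure the corresponding lower bound is tight enough that the extremal clique example (size $\lfloor(1-\cosigma)\Delta\rfloor+1$) is not contradicted. Getting the worst case of this estimate to match $\cosigma/(1+2\cosigma)$ exactly, rather than losing a further constant factor, will be the delicate part; everything else is a recombination of the machinery already built for Theorem~\ref{col_result}.
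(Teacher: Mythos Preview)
Your high-level structure is right: the $\max$ splits into two cases, the second follows from Theorem~\ref{col_result} via the observation that bounded codegree implies $\cosigma$-sparsity, and the first requires re-running the analysis of the random-priority procedure under the stronger hypothesis. The concentration and local-lemma machinery, as you say, carries over without change.

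However, the mechanism you describe for the term $\cosigma/(1+2\cosigma)$ is not the one that actually works, and as written your argument would not produce that constant. Two concrete misidentifications. First, the ``gain'' in the inclusion--exclusion estimate $\Prob[\bI_r\neq\varnothing]\le \E[|\bI_r|]-\E[P_r]+\E[T_r]$ comes from \emph{non-adjacent} (independent) pairs $u,v\in N(r)$ that both retain the colour, not from adjacent pairs; adjacent pairs cannot both lie in $\bI$. Second, and this is the crux, the improvement under bounded codegree does \emph{not} come from having more such pairs (indeed $|\I_2|$ may be only $\cosigma\binom{\Delta}{2}$), but from each pair being \emph{much} more likely to survive together. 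In the paper's computation (Claim~\ref{claim:pairs}), $\Pk(u,v)\sim \tfrac{2}{\gamma^2(2-\ell_{uv})}$ where $\ell_{uv}=|N(u)\cap N(v)|/\Delta$. The codegree hypothesis is used on the pair $(u,r)$ and $(v,r)$: since $u,v\in N(r)$, the number of vertices of $N(r)$ missed by $N[u]$ is $\sigma_u\Delta$ with $\sigma_u\ge\cosigma$, and similarly for $v$; hence $u$ and $v$ share at least $(1-\sigma_u-\sigma_v)\Delta$ common neighbours inside $N(r)$, giving $\ell_{uv}\ge 1-\sigma_u-\sigma_v$ and thus $\tfrac{1}{2-\ell_{uv}}\ge\tfrac{1}{1+\sigma_u+\sigma_v}$. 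Summing this over $\I_2$ and using convexity of $x\mapsto x\tfrac{1-x+\cosigma}{1+x+\cosigma}$ on $[\cosigma,1-\cosigma]$ is what forces the denominator $1+2\cosigma$. So the improvement is a \emph{correlation effect of the priority scheme} (large common neighbourhoods synchronise survival), not a count of released colours per round.

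Finally, the $(2\cosigma)^{3/2}$ correction is not concentration slack or a second-moment discrepancy: it is simply the crude bound $\E[T_r]\le 6|\I_3|/\Delta^3\le\sigma^{3/2}$ via Rivin's inequality, combined with $\sigma\le 2\cosigma$ (which one may assume since otherwise $\eps_{\ref{col_result}}(\sigma)$ already wins). The codegree bound plays no role in the Talagrand or local-lemma steps; those are inherited verbatim from Lemma~\ref{lem:list colouring}.
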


\noindent
We remark that the first of the two terms in the maximisation defining $\eps_{\ref{Vu_bound}}$ is the greater one as long as $\cosigma$ is at most around $0.028$.
As $\cosigma\to 0$, note that $\cosigma/(1+2\cosigma) - (2\cosigma)^{3/2} = \cosigma+o(\cosigma)$, and so the leading coefficient in the expression for $\eps_{\ref{Vu_bound}}$ is $1$, which we noted after Conjecture~\ref{conj:Vu} is best possible. As such Theorem~\ref{Vu_bound} constitutes, to the best of our knowledge, the first direct advance towards Conjecture~\ref{conj:Vu}.

\subsection{Graph theoretic notation and probabilistic preliminaries}
%\subsection{Probabilistic preliminaries}
\label{sub:prelim}
%\subsection{Notational conventions}
\label{sub:notation}

Throughout the paper we have adopted the following notation. 

For $k\in \N$, let $[k]$ denote the set $\{1,2,\dots,k\}$.

Given a graph $G$ and a vertex $v$, we write $N_G(v)$ for the {\em (open) neighbourhood} $\{u\in V(G):uv\in E(G)\}$ of $v$ and $N_G[v]$ for the {\em closed neighbourhood} $N_G(v)\cup \{v\}$ of $v$ in $G$. 
The {\em degree} of $v$ in $G$ is denoted by $d_G(v)=|N_G(v)|$.
We usually drop the subscript when there is no ambiguity.

Given a graph $G$ and a vertex subset $S\subseteq V(G)$, we write $G[S]$ for the subgraph of $G$ induced by $S$.

Given a graph $G$, a {\em list-assignment} for $G$ is a map $L:V(G)\to 2^{\N}$, where $2^{\N}$ by convention denotes the set of all subsets of $\N$. We call a list-assignment $L$ a {\em $k$-list-assignment} if $|L(v)|=k$ for all $v\in V(G)$, i.e.~a $k$-list-assignment is a map $L:V(G)\to \binom{\N}{k}$, where $\binom{\N}{k}$ by convention denotes the set of all subsets of $\N$ of size $k$.
We call $L(v)$ the {\em list} of the vertex $v$. 
Given a list-assignment $L$ of $G$, a {\em partial proper $L$-colouring} of $G$ is a map $c:U\to \N$, where $U\subseteq V(G)$, such that $c(v)\in L(v)$ for all $v\in V(G)$ and $c(v)\neq c(w)$ for any $vw\in E(G)$.
We write $\dom(c)$ for the domain of $c$ and drop `partial' if $\dom(c)=V(G)$.
Note that the existence of a proper $L$-colouring for any constant $k$-list-assignment $L$ of $G$ is equivalent to the assertion $\chi(G) \le k$.

Since we will be interested in gradually building up partial proper $L$-colourings, we introduce some terminology to describe the process.
Given a list-assignment $L$ and a partial proper $L$-colouring $c$ of $G$, the {\em residual subgraph} $G_c$ of $G$ with respect to $c$ is the induced subgraph $G[V(G)\setminus \dom(c)]$ and the {\em residual list-assignment} $L_c:G_c\to 2^{\N}$ is defined by $L_c(v) = L(v)\setminus c(N(v))$ for all $v\in V(G)$.
Note that if $c'$ is a proper $L_c$-colouring of $G_c$, then the union of the colourings $c$ and $c'$ is a proper $L$-colouring of $G$.

Our proofs rely on probabilistic methods, for which we require certain probabilistic tools. We use the following form of the Lov\'asz local lemma~\cite{ErLo75}.

\begin{slll}[\cite{ErLo75}]
    Let $p\in[0,1)$, and $\mathcal{A}$ be a finite set of ``bad'' events so that for every $A\in\mathcal{A}$ 
    \begin{itemize}
    \item $\Prob[A]\leq p$, and 
    \item $A$ is mutually independent of all but at most $d$ other events in $\mathcal{A}$.
    \end{itemize}
    If $4pd\leq 1$, then the probability that none of the (``bad'') events in $\mathcal{A}$ occur is strictly positive. 
\end{slll}

To help bound the probability of ``bad'' events in our application of the local lemma, we need to prove concentration of measure.
If $\Omega$ is a product of discrete spaces, we can define smoothness as the property that if $\omega\in\Omega$ and $\omega'\in\Omega$ differ in only one coordinate then $|X(\omega)-X(\omega')|<c$. Talagrand's inequality~\cite{Tal95} tells us that such smooth random variables are highly concentrated. 
However, some random variables that arise from our colouring procedure are not smooth and it is possible for one vertex to cause many others to be uncoloured. Fortunately, such a situation is highly unlikely, one might say exceptional, and can be handled by an adaption of Talagrand's inequality due to Bruhn and Joos~\cite{BrJo18}.

We can formalise this notion as follows, let $\Omega$ be a product space of probability spaces and let $\Omega^*\subseteq \Omega$ be a set of \emph{exceptional} outcomes. We say that $X$ has upward $(s,c)$-certificates if for every for  $\omega \in \Omega\backslash\Omega^*$ we have an index set $I$ of size at most $s$ that identifies all the influences on $X(\omega)$ such that for another event $\omega' \in \Omega\backslash\Omega^*$ if $\omega|_{I}$ differs from $\omega'|_{I}$ in fewer than ${t}/{c}$ coordinates, then $X(\omega')> X(\omega)-t$. 
In other words, for an \emph{unexceptional} $\omega$ any increase in $X(\omega)$ comes from changes in a not too large set of coordinates indexed by $I$, and none of these coordinates increase $X(\omega)$ too much. 
\begin{theorem}[Bruhn and Joos~\cite{BrJo18}, cf.~Talagrand~\cite{Tal95}]\label{thm:upward certs}
Let $((\Omega_i,\sigma_i,\Prob_i))_{i=1}^n$ be probability spaces, $(\Omega,\sigma,\Prob)$ be their product space and $\Omega^*\subseteq\Omega$ a set of \emph{exceptional} outcomes. Let $X:\Omega\to \R$ be a random variable, $M=\max\{\sup|X|,1\}$, and $c\geq 1$. If  $ \Prob[\Omega^*]\leq M^{-2}$ and $X$ has upward $(s,c)$-certificates then for $t>50c\sqrt{s}$,
\begin{equation}
    \Prob[|X-\E[X]|\geq t]\leq4e^{-\frac{t^2}{16c^2s}}+4\Prob[\Omega^*].
\end{equation}
\end{theorem}

\begin{rem}
One might argue that Bruhn and Joos proved Theorem~\ref{thm:upward certs} under the additional assumption that the product space is a product of {\em discrete} probability spaces. This is because, for the avoidance of distracting measurability issues, that assumption was also taken by Talagrand in~\cite{Tal95}. Since our applications of Theorem~\ref{thm:upward certs} are prevented by such an assumption, we provide in Appendix~\ref{sec:talagrand} a derivation of the more general statement.
\end{rem}

\section{Colouring graphs of bounded local density}\label{sec:proof}

Molloy and Reed's original proof of Theorem~\ref{thm:MoResparse} took the following strikingly basic, one might say na\"ive, form.
Given a graph $G$ and a palette, say, $[M]$ of $M$ colours, perform the following steps.
\begin{enumerate}
\item\label{naivestep1}
For each vertex $v\in V(G)$, independently and uniformly at random assign an element of $[M]$ as a colour for $v$.
\item\label{naivestep2}
For each edge of $G$ for which both endpoints have been assigned the same colour, remove the colour from one or both endpoints.
\item\label{naivestep3}
Complete the partial proper colouring to a full colouring of $G$ if possible.
\end{enumerate}
\noindent
Consult~\cite{MoRe02} for broader applications and further refinements of this method.

Molloy and Reed~\cite{MoRe97} showed that for $G$ a $\sigma$-sparse $\Delta$-regular graph and $M=(1-\eps)\Delta$, for some $\eps>0$ depending on $\sigma$, the above na\"ive strategy succeeds. In particular, their essential observation was that, after Step~\ref{naivestep2}, we obtain a partial proper colouring and expect in each neighbourhood for some colours to appear multiple times.
With the help of Talagrand's inequality and the Lov\'asz local lemma, they could moreover show that, with positive probability and uniformly for each vertex $v$, enough colours are repeated in $N(v)$ to ensure that the residual list of $v$ is larger than the residual degree of $v$. Step~\ref{naivestep3} is then carried out easily by a greedy colouring procedure. It is worth noting here that for Theorem~\ref{thm:MoResparse} it suffices by a standard reduction to consider $\Delta$-regular graphs.

Remarkably, despite the attention this result has received over the years, the only known methods for proving Theorem~\ref{thm:MoResparse} (including those of the present work) take the same basic form above.
We remark that in the conflict resolution of Step~\ref{naivestep2}, Molloy and Reed were nonpartisan and removed the colour from both endpoints of a monochromatic edge. The advance of Bruhn and Joos~\cite{BrJo18} was obtained by independently tossing a fair coin for each monochromatic edge to decide which of the two endpoints to uncolour in Step~\ref{naivestep2}, and then more accurately estimating the number of repeated colours in each neighbourhood via the inclusion-exclusion principle. This new estimate, however, did not satisfy the conditions of the usual combinatorial incarnations of Talagrand's inequality and so in order to fit it into the previous framework they devised their ``exceptional outcomes'' version of Talagrand's, Theorem~\ref{thm:upward certs}. 

The salient contribution of Bonamy {\em et al.}~\cite{BPP22} was to show that for $G$ a $\sigma$-sparse $\Delta$-regular graph, Step~\ref{naivestep3} could be performed after an iteration of Steps~\ref{naivestep1} and~\ref{naivestep2} upon the residual subgraph, and so on, until the residual list-sizes are larger than the maximum degree of the residual subgraph. The key for showing this to be possible was to demonstrate that neighbourhood intersections in the residual subgraph are expected to behave ``quasirandomly'' (see Subsections~\ref{sub:list colouring}), which helps to guarantee local sparsity (see Subsection~\ref{sub:proof}). Moreover, the list-sizes in the residual list-assignment are uniformly bounded from below in expectation and so, together with Theorem~\ref{thm:upward certs} and the Lov\'asz local lemma, we can be guaranteed a proper partial (list-)colouring of $G$ whose residual subgraph is contained in a $\sigma'$-sparse $\Delta'$-regular graph with a residual list-assignment having list-sizes all at least $(1-\eps')\Delta'$, where for $\eps$ sufficiently small we are guaranteed that $\eps'<\eps$.

Our work continues this evolution with two distinct but closely related alterations.

The first alteration is to improve in the conflict resolution of Step~\ref{naivestep2} by means of a priority assignment strategy.
We assign independently and uniformly at random to each vertex $v$ a {\em priority} $\pi(v) \in [0,1]$.
For each monochromatic edge $uv$ considered in Step~\ref{naivestep2}, we uncolour the vertex with lower priority, i.e.~if $\pi(u) \ge \pi(v)$ we uncolour $v$.
Curiously, this idea was already suggested at the end of Molloy and Reed's paper!
It is worth pointing out that Pemmaraju and Srinivasan~\cite{PeSr08} also applied this same strategy, albeit not iteratively, for other graph colouring problems. 

There are three important benefits to note regarding this first alteration. Solely for the purposes of explaining these benefits, we find it convenient to introduce two auxiliary digraphs $\GBJ$ and $\GHJK$. These encode Bruhn and Joos's conflict-resolution protocol (also used by Bonamy {\em et al.})~and ours, respectively, by having an arc $\overrightharp{uv}$ from $u$ to $v$ if $u$ uncolours $v$ whenever both are assigned the same colour in Step~\ref{naivestep1}.
Both digraphs have $G$ as the underlying undirected graph: $\GBJ$ orients each edge of $G$ independently and uniformly, and $\GHJK$ is an acyclic orientation of $G$ according to a uniform total ordering of $V(G)$.
First, notice there can be directed cycles in $\GBJ$, at least one vertex of which might be needlessly uncoloured, but not in $\GHJK$.
Second, given nonadjacent $v,w\in N(u)$, the events $(\overrightharp{uv}\in E(\GBJ))$ and $(\overrightharp{uw}\in E(\GBJ))$ are independent, whereas the events $(\overrightharp{uv}\in E(\GHJK))$ and $(\overrightharp{uw}\in E(\GHJK))$ are correlated. Thus if $|N(v)\cap N(w)|$ is large, then $v$ and $w$ are more likely to synchronise their colours.
Third, note that the in-degree of a vertex $v$ in $\GBJ$ has a $\mathrm{Bin}(\Delta,1/2)$ distribution, while in $\GHJK$ it has a $\mathrm{Unif}\{0,\dots,\Delta\}$ distribution. Thus, supposing $v$ has $x$ neighbours assigned the same colour as $v$ after Step~\ref{naivestep1}, in the Bruhn--Joos protocol $v$ keeps its colour with probability $1/2^x$, while in ours the probability is a much larger $1/x$.
It follows that to successfully colour in one iteration under the Bruhn--Joos protocol versus ours, one needs to maintain that $x$ is uniformly much smaller over all vertices, and the number $M$ of colours in the palette must therefore be larger.

We summarise these benefits of priority assignment as follows:
(1)~fewer needless uncolourings;
(2)~colour synchronisation for nonadjacent vertices with many common neighbours; and
(3)~robustness of the colouring procedure to using fewer colours.

Our second alteration critically takes advantage of this last benefit to improve on Steps~\ref{naivestep1} and~\ref{naivestep3}. For Step~\ref{naivestep1}, rather than using $M=(1-\eps)\Delta$ colours for some small but fixed $\eps>0$, we use $M=\lfloor\Delta/\gamma\rfloor$ colours for some large but fixed $\gamma$. This yields larger independent sets (which ultimately correspond to colour classes) and amplifies the colour synchronisation. Resolving conflicts using priorities we then adopt for Step~\ref{naivestep3} the iterative strategy of Bonamy {\em et al.}~to obtain a quasirandom residual subgraph and a residual list-assignment upon which we iterate. Denoting the maximum degree of this residual subgraph by $\Delta'$ we top up our lists so that each vertex has a list with $\lfloor\Delta'/\gamma\rfloor$ colours.  We iterate until we have a partial proper colouring such that the residual subgraph has negligible maximum degree relative to $\Delta$. (Loosely speaking, this procedure ``nibbles'' the original list of $(1-\eps)\Delta$ colours by at most $\lfloor\Delta/\gamma\rfloor$ colours at a time.) We may then greedily complete the colouring.

Informally, one can understand the procedure and its success as $\sigma\to 0$ by considering what happens if it is run with $M=1$, i.e.~with a single colour, say, red. %MAGA?
The key question is, in $N(v)$ for $v\in V(G)$, if red appears (thus preventing $v$ from being red), how many times do we expect it to be repeated? Restricting our attention to $N(v)$, we estimate the number of repetitions after Step~\ref{naivestep2} by the expected number of red pairs less the expected number of red triples. We have exactly $\sigma\binom{\Delta}{2}$ potential pairs and by a result of Rivin \cite{Riv02}, at most $\sigma^{3/2}\binom{\Delta}{3}$ potential triples. The probability of any single vertex being red after conflicts are resolved is simply $1/\Delta$, the chance that it has higher priority than all of its neighbours,  while for nonadjacent pairs and triples it is \emph{roughly} $1/\Delta^2$ and $1/\Delta^3$ (ignoring correlations). This yields the estimate
\[\frac{1}{\Delta^2}\sigma\binom{\Delta}{2}-\frac{1}{\Delta^3}\sigma^{3/2}\binom{\Delta}{3}\sim \sigma\left(\frac12 - \frac{\sigma^{1/2}}{6}\right).\]
As we prove in Theorem~\ref{thm:independent}, this is in fact what we obtain by accurately accounting for the correlations.

\subsubsection*{Structure of the proof}

The engine of the proof of Theorem~\ref{col_result} is an effective method of randomly generating a large enough independent set of a $\sigma$-sparse $\Delta$-regular graph, using the conflict-resolution protocol described above. This is embodied in Theorem~\ref{thm:independent}, which we prove in Subsection~\ref{sub:generating independent set}.
In each step (or ``nibble'') of our colouring procedure, given a $\sigma$-sparse graph of maximum degree $\Delta$ with a $k$-list-assignment, we apply Theorem~\ref{thm:independent} and the Lov\'asz local lemma to obtain a partial proper list-colouring such that the residual subgraph is quasirandom, has maximum degree $\Delta'$, and the corresponding residual list-assignment contains a $k'$-list-assignment.
This ``nibble'' is embodied in Lemma~\ref{lem:list colouring} and is shown in Subsection~\ref{sub:list colouring}.
With Lemma~\ref{lem:list colouring} in hand, in Subsection~\ref{sub:proof} we are able to present in full the iterative colouring procedure and the proof of Theorem~\ref{col_result}.

\subsection{Sampling independent sets}\label{sub:generating independent set}

We introduce and analyse a randomised procedure to generate an independent set.
This procedure captures the behaviour of each colour class of the colouring procedure we analyse in Subsection~\ref{sub:list colouring}.
The idea is to assign to each vertex a random \emph{priority}
and resolve conflicting edges by removing the endpoint with lower priority.

Fix a parameter $\gamma>0$. Given a $\Delta$-regular graph $G=(V,E)$, the following procedure outputs a random independent set $\bI$ of $G$.
\begin{enumerate} 
\item\label{samplestep1} Activate each vertex of~$G$ with probability $\gamma/\Delta$,
  independently at random.
  Let $\A$ be the set of activated vertices.
\item\label{samplestep2} Assign to each activated vertex $v\in \A$ a number $\pi(v)$
  chosen uniformly at random in $[0,1]$.
\item\label{samplestep3} In order to resolve any conflict, i.e.~two neighbouring vertices in $\A$,
  remove the vertex with lower priority $\pi$.
  This yields the independent set
  \[\bI=\sst{v\in \A}{\pi(v) > \pi(u) \text{ for every }u \in N(v)\cap \A},\]
  consisting of all the local maxima of $\pi$ in $G[\A]$.
\end{enumerate}

The purpose of this section is to prove the following result.
\begin{theorem}\label{thm:independent}
  For every $\iota>0$, there are $\Delta_{\ref{thm:independent}}=\Delta_{\ref{thm:independent}}(\iota)$
  and $\gamma_{\ref{thm:independent}}=\gamma_{\ref{thm:independent}}(\iota)$ such that the following holds.
  Let $G$ be a $\sigma$-sparse $\Delta$-regular graph with $\Delta\geq\Delta_{\ref{thm:independent}}$,
  and let
  $\bI$ be a random independent set obtained by the algorithm above with some parameter
  $\gamma\geq\gamma_{\ref{thm:independent}}$.
  For every vertex $r\in V(G)$, 
\[
\left|\Prob[r\in \bI]-\frac{1-e^{-\gamma}}{\Delta}\right| \le \frac{2}{\Delta^2}.
\]
Moreover, setting $\bI_r = N(r) \cap \bI$, it holds that
  \[
  \frac{\Prob\left[\bI_r \neq \varnothing\right]}{\E\left[|\bI_r|\right]}
 \leq 
  1-\eps_{\ref{col_result}}(\sigma)+\iota.
  \]
\end{theorem}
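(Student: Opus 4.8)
The plan is to estimate the numerator $\Prob[\bI_r\neq\varnothing]$ and the denominator $\E[|\bI_r|]$ separately, each up to a multiplicative $(1+o(1))$ factor as $\Delta\to\infty$ and $\gamma\to\infty$, and then let $\gamma\to\infty$ at the end so that the $\gamma$-dependent terms vanish. For the denominator, linearity of expectation gives $\E[|\bI_r|]=\sum_{v\in N(r)}\Prob[v\in\bI]$, and by the first part of the theorem each term is $(1-e^{-\gamma})/\Delta+O(\Delta^{-2})$, so $\E[|\bI_r|]=(1-e^{-\gamma})+O(\Delta^{-1})$. The crux is therefore the numerator: I want to show $\Prob[\bI_r\neq\varnothing]\le(1-e^{-\gamma})\bigl(1-\eps_{\ref{col_result}}(\sigma)\bigr)+o(1)$, i.e.\ that the events $\{v\in\bI\}$ for $v\in N(r)$ overlap enough — have enough ``colour synchronisation'' — that the union is appreciably smaller than the sum.

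I would use the inclusion--exclusion lower bound (Bonferroni) truncated at the third term:
\begin{equation*}
\Prob[\bI_r\neq\varnothing]\;\le\;\sum_{v\in N(r)}\Prob[v\in\bI]\;-\;\sum_{\{v,w\}}\Prob[v,w\in\bI]\;+\;\sum_{\{v,w,x\}}\Prob[v,w,x\in\bI],
\end{equation*}
where the pair and triple sums run over (necessarily non-adjacent) pairs and triples inside $N(r)$. The negative (pair) term is where the $\sigma$-sparsity is spent: a pair $\{v,w\}\subseteq N(r)$ is non-adjacent for all but at most $(1-\sigma)\binom{\Delta}{2}$ choices, so there are at least $\sigma\binom{\Delta}{2}$ non-adjacent pairs, and for each such pair $\Prob[v,w\in\bI]$ should be $\asymp(1-e^{-\gamma})^2\cdot(\text{something close to }1)/\Delta^2$ — this is exactly the step where the correlation between $\{\overrightharp{rv}\}$ and $\{\overrightharp{rw}\}$ helps: conditioned on both $v,w$ being activated and on $r$'s status, $v$ and $w$ are both local maxima with probability governed by the sizes of $N(v)\cup N(w)$ and the common neighbourhood, and $|N(v)\cap N(w)|$ being large only \emph{increases} $\Prob[v,w\in\bI]$. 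The positive (triple) term must be bounded \emph{above}, and here Rivin's inequality~\cite{Riv02} caps the number of non-adjacent triples in $N(r)$ by $\sigma^{3/2}\binom{\Delta}{3}+o(\Delta^3)$; each such triple contributes $O((1-e^{-\gamma})^3/\Delta^3)$ (an upper bound on $\Prob[v,w,x\in\bI]$ suffices, so correlations can only be bounded crudely). Assembling, the pair and triple sums contribute $-(1-e^{-\gamma})^2\bigl(\tfrac{\sigma}{2}+o(1)\bigr)$ and $+(1-e^{-\gamma})^3\bigl(\tfrac{\sigma^{3/2}}{6}+o(1)\bigr)$ respectively (after dividing through by $\Delta$ via the $\binom{\Delta}{k}/\Delta^k$ ratios), so dividing numerator by denominator $(1-e^{-\gamma})$ yields
\begin{equation*}
\frac{\Prob[\bI_r\neq\varnothing]}{\E[|\bI_r|]}\;\le\;1-(1-e^{-\gamma})\Bigl(\frac{\sigma}{2}-(1-e^{-\gamma})\frac{\sigma^{3/2}}{6}\Bigr)+o(1)\;\le\;1-\eps_{\ref{col_result}}(\sigma)+\iota
\end{equation*}
once $\gamma$ is taken large (so $e^{-\gamma}$ is negligible) and $\Delta$ large.

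The main obstacle, and where I expect the bulk of the technical work to lie, is the precise computation of the pair probability $\Prob[v,w\in\bI]$ for non-adjacent $v,w\in N(r)$: one must compute the probability that both $v$ and $w$ are activated local maxima of the random priority function, carefully tracking (i) the overlap $N(v)\cap N(w)$, which inflates this probability, (ii) the adjacency of $r$ to both, and (iii) the conditioning needed so that the estimate is uniform over all such pairs and robust enough to survive summation. A subtlety is that $\bigl(\overrightharp{rv}\in E(\GHJK)\bigr)$ and $\bigl(\overrightharp{rw}\in E(\GHJK)\bigr)$ are positively correlated (both are implied by $r$ having the top priority in its neighbourhood), so the naive independent-heuristic value $1/\Delta^2$ is only approximately right, and quantifying the correction — showing it is $1+o(1)$ and does not erode the $\sigma/2$ constant — is the delicate point. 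I would handle this by conditioning on the priority of $r$ (or equivalently integrating over $\pi(r)\in[0,1]$), after which the events ``$v$ beats everyone in $N(v)\cap\A$'' and ``$w$ beats everyone in $N(w)\cap\A$'' decouple except through $N(v)\cap N(w)$, and then bound the resulting expression from below by an elementary integral estimate, using $|N(v)\cap N(w)|\le\Delta$ and $\Delta$-regularity. The triple term, by contrast, only needs a clean upper bound and is comparatively painless. Throughout, all error terms are absorbed into the single slack parameter $\iota$ by choosing $\gamma_{\ref{thm:independent}}(\iota)$ and then $\Delta_{\ref{thm:independent}}(\iota)$ appropriately.
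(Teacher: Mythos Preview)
Your overall architecture matches the paper's: truncate Bonferroni at level three, spend $\sigma$-sparsity on the pair term, spend Rivin's triangle bound on the triple term, and push $\gamma\to\infty$ at the end. The first part of the theorem and the evaluation $\E[|\bI_r|]=1-e^{-\gamma}+o(1)$ are as you say. The gap is in the triple term, which you dismiss as ``comparatively painless''. Your claim that each independent triple contributes $O\bigl((1-e^{-\gamma})^3/\Delta^3\bigr)$ with implied constant~$1$---yielding $\E[T_r]\le(1-e^{-\gamma})^3\sigma^{3/2}/6+o(1)$---is not an upper bound: the events $\{u\in\bI\}$, $\{v\in\bI\}$, $\{w\in\bI\}$ are \emph{positively} correlated through common neighbours, and $\Prob[u,v,w\in\bI]$ can be as large as $\sim 6/\Delta^3$ when $N(u),N(v),N(w)$ nearly coincide. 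A genuinely crude upper bound (which the paper also records, as $\Pk(u,v,w)\le 6/\gamma^3$) therefore only gives $\E[T_r]\le\sigma^{3/2}+o(1)$, and hence $\E[P_r-T_r]\ge\sigma/2-\sigma^{3/2}$, a factor~$6$ short of the $\eps_{\ref{col_result}}(\sigma)=\sigma/2-\sigma^{3/2}/6$ demanded by the statement.

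What rescues the~$1/6$ is precisely the work you did not budget for: the paper parametrises \emph{both} $\E[P_r]$ and $\E[T_r]$ by the overlap fractions $\ell_{uv}=|N(u)\cap N(v)|/\Delta$, obtaining $\E[P_r]=\sigma\,\E_{uv\in\I_2}\bigl[\tfrac{1}{2-\ell_{uv}}\bigr]+o(1)$ and (after rewriting the triple sum as a weighted pair sum) $\E[T_r]\le|\I_3|/\Delta^3+\tfrac{\sigma}{6}\E_{uv\in\I_2}\bigl[\tfrac{2}{2-\ell_{uv}}+\ell_{uv}-1\bigr]+o(1)$. Subtracting and using that $x\mapsto\tfrac{4}{2-x}-x+1$ is increasing on $[0,1]$ shows the correlation boost to triples is always dominated by the correlation boost to pairs, recovering $\E[P_r-T_r]\ge\sigma/2-\sigma^{3/2}/6$. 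This rewriting in turn hinges on a nontrivial reduction (the paper's Claim~\ref{claim:no outside p3}): one may assume no pair $u,v\in N(r)$ has a common neighbour \emph{outside} $N[r]$, so that the $\ell_{uv}\Delta$ common neighbours of $u,v$ all lie in $N(r)$ and hence at most $(1-\ell_{uv})\Delta$ vertices of $N(r)$ extend $\{u,v\}$ to an independent triple. Without this reduction the triple-to-pair reweighting fails. Finally, a minor point: conditioning on $\pi(r)$ does not decouple anything useful; the right conditioning (which the paper uses) is on the priorities of the pair/triple vertices themselves, integrating over the simplex $\pi(u)\ge\pi(v)\,(\ge\pi(w))$.
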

\noindent
The ratio in Theorem~\ref{thm:independent} can be read as the inverse of the average size of~$\bI_v$
when $\bI_v$ is nonempty.
For comparison, if $\bI$ is chosen instead as a random colour class of a proper $\chi$-colouring of~$G$, then this ratio is a lower bound for $\chi/\Delta$.
After proving Theorem~\ref{thm:independent}, we use the remainder of the section to transfer this bound to the chromatic number, showing that
$\chi(G)\leq (1-\eps_{\ref{col_result}}(\sigma)+o(1))\cdot\Delta$.

In the proof, we say that a vertex $u$ \emph{trumps} a vertex $v$ if $uv$ is an edge,
$u$ and $v$ are activated and $\pi(u)\ge\pi(v)$.
With this vocabulary, $\bI$ is the set of activated vertices that are not trumped.
\begin{proof}[Proof of Theorem~\ref{thm:independent}]
It is convenient to instead prove a slightly stronger, local version of Theorem~\ref{thm:independent},
where $\sigma$ is the local sparsity of $r$, so that $G[N(r)]$
contains exactly $(1-\sigma)\binom{\Delta}{2}$ edges.
Proving the local version is enough because
$\eps_{\ref{col_result}}(\sigma)$ is an increasing function of $\sigma$.

For convenience, we define
\[
 \I_n=\sst{\{u_i\}_{i=1}^n\subseteq N(r)}{\forall i,j\in\{1,\dots,n\}, ~u_iu_j\not\in E(G)}
\]
 as the collection of independent sets of size $n$ in $N(r)$.
 For brevity, we often write, for example, $uv\in \mathcal{I}_2$ instead of $\{u,v\}\in \mathcal{I}_2$ or $uvw\in \mathcal{I}_3$ instead of $\{u,v,w\}\in \mathcal{I}_3$.
For an independent set
$\{u_i\}_{i=1}^n\in \mathcal{I}_n$, we define
\[
 \Pk(u_1,\dots,u_n):=\Prob\left[\forall i\in[n],~u_i\in \bI \,\,\middle|\,\, \forall i\in[n],~u_i \in \A\right].
\]
 This is the probability that all of the considered vertices
are retained in the independent set if they were activated in Step~\ref{samplestep1}.

Let us show the first part of the theorem.
\begin{claim}\label{claim:v in I}
        For every vertex $v\in V$,
        it holds that 
\[
\Prob[v\in\bI]=
        \frac{1}{\Delta}\int_0^\gamma\left(1 -\frac{x}{\Delta}\right)^{\Delta} dx\in
\left[\frac{1-e^{-\gamma}}{\Delta}-\frac{2}{\Delta^2},\frac{1-e^{-\gamma}}{\Delta}\right].
\]
\end{claim}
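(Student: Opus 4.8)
The plan is to compute $\Prob[v\in\bI]$ by conditioning on the priority value of $v$, exploiting the fact that $v$ is retained in $\bI$ exactly when it is activated and no neighbour trumps it.

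\medskip

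First I would observe that $v\in\bI$ requires $v\in\A$, which happens with probability $\gamma/\Delta$, and then condition on the priority $\pi(v)=p$, which is uniform on $[0,1]$. Given $v\in\A$ and $\pi(v)=p$, the vertex $v$ survives iff for every neighbour $u\in N(v)$, $u$ does \emph{not} trump $v$; since $G$ is $\Delta$-regular there are exactly $\Delta$ such neighbours, and the events ``$u$ trumps $v$'' for distinct $u\in N(v)$ are mutually independent conditionally on $\pi(v)=p$ (each depends only on whether $u$ is activated and, if so, on $\pi(u)$, which are independent across $u$). A single neighbour $u$ trumps $v$ iff $u$ is activated \emph{and} $\pi(u)\ge p$; this has probability $(\gamma/\Delta)\cdot(1-p)$. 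Hence the survival probability given $\pi(v)=p$ and $v\in\A$ is $\bigl(1-\tfrac{\gamma}{\Delta}(1-p)\bigr)^{\Delta}$, and so
\[
\Prob[v\in\bI]=\frac{\gamma}{\Delta}\int_0^1\Bigl(1-\tfrac{\gamma}{\Delta}(1-p)\Bigr)^{\Delta}\,dp.
\]
Substituting $x=\gamma(1-p)$ (so $dx=-\gamma\,dp$, and $x$ ranges over $[0,\gamma]$) turns this into $\frac{1}{\Delta}\int_0^\gamma\bigl(1-\tfrac{x}{\Delta}\bigr)^{\Delta}\,dx$, which is the claimed exact expression.

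\medskip

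It then remains to bound the integral $\int_0^\gamma(1-x/\Delta)^{\Delta}\,dx$ between $1-e^{-\gamma}-2/\Delta$ and $1-e^{-\gamma}$. The upper bound is immediate from $(1-x/\Delta)^{\Delta}\le e^{-x}$ for $x\in[0,\Delta]$, integrating to get at most $\int_0^\gamma e^{-x}\,dx=1-e^{-\gamma}\le 1-e^{-\gamma}$. For the lower bound I would use a standard reverse estimate such as $(1-x/\Delta)^{\Delta}\ge e^{-x}(1-x^2/\Delta)$ (valid for $0\le x\le\Delta$, e.g. via $\log(1-t)\ge -t-t^2$ for $t\le 1/2$ together with a direct check, or simply $(1-x/\Delta)^\Delta \ge e^{-x} - x^2/\Delta$ type bounds), so that $\int_0^\gamma(1-x/\Delta)^{\Delta}\,dx\ge \int_0^\gamma e^{-x}\,dx - \frac{1}{\Delta}\int_0^\gamma x^2 e^{-x}\,dx\ge (1-e^{-\gamma}) - \frac{2}{\Delta}$, using $\int_0^\infty x^2 e^{-x}\,dx=2$. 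Dividing by $\Delta$ gives the stated containment, and the displayed bound $\bigl|\Prob[r\in\bI]-\tfrac{1-e^{-\gamma}}{\Delta}\bigr|\le\tfrac{2}{\Delta^2}$ in Theorem~\ref{thm:independent} follows on setting $v=r$.

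\medskip

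I do not anticipate a genuine obstacle here: the only point requiring care is justifying the conditional independence across neighbours in the regular graph (which is clean because each neighbour's trumping status depends on disjoint coordinates of the product space) and choosing an elementary two-sided estimate for $(1-x/\Delta)^{\Delta}$ versus $e^{-x}$ with an error term that integrates to something $O(1/\Delta)$. Everything else is a routine change of variables and an evaluation of $\int x^2 e^{-x}$.
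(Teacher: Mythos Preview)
Your proposal is correct and essentially identical to the paper's proof: both condition on $\pi(v)$, use independence across neighbours to get $(1-\gamma(1-p)/\Delta)^\Delta$, substitute to obtain the integral, bound above via $(1-x/\Delta)^\Delta\le e^{-x}$, and bound below via an inequality of the form $(1-x/\Delta)^\Delta\ge e^{-x}(1-x^2/\Delta)$ together with $\int_0^\infty x^2e^{-x}\,dx=2$. The only cosmetic difference is that the paper justifies the lower-bound pointwise estimate via $1-t\ge e^{-t}(1-t^2)$ (then Bernoulli on $(1-x^2/\Delta^2)^\Delta$), whereas you suggest $\log(1-t)\ge -t-t^2$; both routes yield the same inequality.
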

\begin{proof}
Assuming that~$v$ is activated
and given $\pi(v)$, the probability that $v$ is trumped by some
other vertex $q\in N(v)$ is the probability~$q$ is activated times the probability
that $\pi(q)\geq\pi(v)$.
This latter probability is $1-\pi(v)$ because $\pi(q)$ is chosen uniformly at random in $[0,1]$.
Consequently,
\[
        \Prob[q\text{ trumps }v]=\frac{\gamma}{\Delta}(1-\pi(v)) = \frac{\gamma x}{\Delta},
\]
where we write $x=1-\pi(v)$ in order to simplify integration.
Expressing the probability that no neighbour of~$v$ trumps~$v$ as
$\left(1- \gamma x/\Delta\right)^\Delta$ and integrating over the possible values of $x$,
we get
\[
        \Prob[v\in\bI] =
        \Prob[v\in\A]\cdot\int_0^1\left(1 -\frac{\gamma x}{\Delta}\right)^{\Delta} dx
        = \frac{\gamma}{\Delta}\int_0^1\left(1 -\frac{\gamma x}{\Delta}\right)^{\Delta} dx
        = \frac{1}{\Delta}\int_0^\gamma\left(1 -\frac{x'}{\Delta}\right)^{\Delta} dx'
\]
which proves the first part of the claim.
This value is always less than the limit $(1-e^{-\gamma})/\Delta$
since
\[\int_0^\gamma\left(1-\frac{x}{\Delta}\right)^{\Delta}dx\leq\int_0^\gamma e^{-x}dx=1-e^{-\gamma}.
\]
For the lower bound, we use the fact $e^{-t}(1-t^2)\leq 1-t$ for every $t\in[0,1]$
applied to $t=x/\Delta$.
This gives
\[\int_0^\gamma\left(1-\frac{x}{\Delta}\right)^{\Delta}dx\geq \int_0^\gamma e^{-x}\left(1-\frac{x^2}{\Delta^2}\right)^{\Delta}dx
\geq \int_0^\gamma e^{-x}dx -\frac{1}{\Delta}\int_0^\gamma e^{-x}x^2 dx.
\]
Last, bounding the second integral by $\int_0^\infty e^{-x}x^2dx=2$,
we deduce that $\Prob[v\in\bI]\geq (1-e^{-\gamma})/\Delta-2/\Delta^2$. 
\end{proof}
As a consequence of Claim~\ref{claim:v in I},
the expected size of $\bI_r=\bI\cap N(r)$ is
\[
\E[|\bI_r|]=\sum_{v\in N(r)}\Prob[v\in \bI]= 1 - e^{-\gamma} + o(1).
\]

% Noneg stuff is in the noneg branch

We now prove the following reduction.
\begin{claim}\label{claim:no outside p3}
We may assume that no pair of distinct vertices in $N(r)$ have a common neighbour outside of
$N[r]$.
\end{claim}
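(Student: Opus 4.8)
\textbf{Proof plan for Claim~\ref{claim:no outside p3}.}

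The plan is to reduce to the stated situation by a modification argument that only destroys edges outside $N[r]$, thereby not changing any of the quantities of interest. Suppose $u,v\in N(r)$ are nonadjacent (if they are adjacent, they contribute no independent pair and the issue is moot) and have a common neighbour $w\notin N[r]$. The idea is to replace the single vertex $w$ by two ``split'' copies $w_u$ and $w_v$, where $w_u$ inherits the adjacency to $u$ (and to some of $w$'s other neighbours) and $w_v$ inherits the adjacency to $v$ (and the remaining neighbours of $w$), in such a way that every vertex keeps degree exactly $\Delta$ and the neighbourhood $N(r)$ is unchanged. Concretely, I would partition $N(w)\setminus\{u,v\}$ into two sets and pad each of $w_u,w_v$ with fresh degree-completing pendant structure (attaching to each new copy enough new vertices, themselves of degree $\le\Delta$, or rather making the whole thing $\Delta$-regular by the same clique-plus-pendant trick as in Proposition~\ref{prop:sharp}) so the graph stays $\Delta$-regular. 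Iterating over all such bad triples $\{u,v,w\}$ with $w\notin N[r]$ yields a graph $G'$ in which no two vertices of $N(r)$ share a neighbour outside $N[r]$.

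Next I would check the two things that must be preserved. First, $G'[N(r)]$ has exactly the same edge set as $G[N(r)]$ — the surgery never touches edges inside $N(r)$, since $w\notin N[r]$ means $w$ is not itself in $N(r)$, and we only redistributed $w$'s incidences — so the local sparsity parameter $\sigma$ at $r$ is unchanged. Second, and this is the crux, I must argue that $\Prob[\bI_r\neq\varnothing]/\E[|\bI_r|]$ can only have gotten worse (i.e.\ larger, or at least not smaller) under the split, so that proving the bound for $G'$ suffices. For this, observe that $\E[|\bI_r|]=\sum_{v\in N(r)}\Prob[v\in\bI]$ depends only on $\Delta$ and the activation probability (Claim~\ref{claim:v in I}), hence is identical in $G$ and $G'$. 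So it is enough to show $\Prob[\bI_r\neq\varnothing]$ does not decrease. The point is that removing the common neighbour $w$ removes a positive correlation: in $G$, the events ``$u\in\bI$'' and ``$v\in\bI$'' are negatively affected by $w$ in a \emph{shared} way (if $w$ is activated with high priority it can trump both $u$ and $v$ simultaneously), whereas in $G'$ the copies $w_u,w_v$ act independently, which makes $u$ and $v$ less positively correlated, and by inclusion–exclusion $\Prob[\bI_r\neq\varnothing]=\sum\Prob[u_i\in\bI]-\sum_{i<j}\Prob[u_i,u_j\in\bI]+\cdots$ increases when the pair (and higher-order) terms decrease. I would make this rigorous by a coupling: sample the priorities and activations for the shared part so that whatever trumping $w$ inflicts on $\{u,v\}$ in $G$ is dominated by the union of what $w_u$ does to $u$ and $w_v$ does to $v$ in $G'$, coordinate by coordinate, so that $\bI_r^{G}\subseteq\bI_r^{G'}$ in the coupling; then $\Prob[\bI_r\neq\varnothing]$ is monotone.

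The main obstacle I anticipate is precisely this monotonicity/coupling step: one has to be careful that splitting $w$ does not accidentally \emph{help} $w$'s other neighbours trump $u$ or $v$ in some second-order way, and that the degree-padding (adding $\Delta$-regular pendant gadgets) does not introduce new common neighbours of pairs in $N(r)$ — it does not, since the gadgets are attached only to $w_u$ or $w_v$, which are not in $N(r)$. A cleaner alternative, which I would fall back on if the coupling gets fiddly, is purely computational: carry the ``outside common neighbour'' contributions as an explicit error term through the later estimates of $\Pk$ and show by the same Rivin-type counting (at most $\sigma^{3/2}\binom{\Delta}{3}$ independent triples, codegrees summing to $O(\sigma\Delta^2)$) that these contributions are $o(1)$ relative to $\E[|\bI_r|]$ and hence absorbable into the slack $\iota$. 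Either way, once we are in the reduced setting, all neighbourhood-intersection bookkeeping in the rest of the proof only involves common neighbours lying \emph{inside} $N[r]$, which is exactly what makes the correlation computations tractable.
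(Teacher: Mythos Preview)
Your high-level strategy---modify $G$ to a graph $G'$ with no outside common neighbours, note that $\E[|\bI_r|]$ is unchanged, and argue that $\Prob[\bI_r\neq\varnothing]$ can only go up---matches the paper's, but the coupling you target is too strong and in fact impossible. For a nonadjacent pair $u,v\in N(r)$ sharing the outside vertex $w$, condition on all states except those of $w$ (in $G$) and of $w_u,w_v$ (in $G'$): the probability that $w$ trumps neither $u$ nor $v$ is $1-\tfrac{\gamma}{\Delta}\bigl(1-\min\{\pi(u),\pi(v)\}\bigr)$, whereas the probability that $w_u$ fails to trump $u$ \emph{and} $w_v$ fails to trump $v$ is the strictly smaller product $\bigl(1-\tfrac{\gamma}{\Delta}(1-\pi(u))\bigr)\bigl(1-\tfrac{\gamma}{\Delta}(1-\pi(v))\bigr)$. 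Hence $\Prob[u,v\in\bI_r]$ is strictly larger in $G$ than in $G'$, so no valid coupling can realise $\bI_r^{G}\subseteq\bI_r^{G'}$ pointwise. Your inclusion--exclusion heuristic does not rescue this either: pair terms decrease under the split, but so do triple and higher terms, and the alternating signs block a monotonicity conclusion.

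The paper instead aims only for the weaker event-level implication $\{\bI_r'=\varnothing\}\Rightarrow\{\bI_r=\varnothing\}$, and with a different surgery: delete $w$ entirely and attach each former neighbour of $w$ to its own fresh vertex in a $K_{\Delta-1,\Delta}$ gadget (preserving $\Delta$-regularity). In the coupling, exactly \emph{one} gadget vertex $w'$ inherits $w$'s activation and priority, namely the new neighbour of $v_U$, the highest-priority member of $U=\{x\in N(r)\cap N(w):x\text{ would lie in }\bI\text{ if }w\text{ were ignored}\}$. Since the identity of $v_U$ (hence of $w'$) depends only on states independent of $w$, the marginal on $G'$ is still the product measure. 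Now if $\bI_r'=\varnothing$ then $v_U$ is trumped in $G'$, necessarily by $w'$, forcing $\pi(w)\ge\pi(v_U)\ge\pi(x)$ for every $x\in U$; thus $w$ trumps all of $U$ in $G$ and $\bI_r=\varnothing$. Finally, your computational fallback runs into precisely the obstacle the claim is designed to remove: the later bound on $\E[T_r]$ relies on a pair $uv\in\I_2$ extending to at most $(1-\ell_{uv})\Delta$ triples inside $N(r)$, which fails once common neighbours of $u,v$ may lie outside $N[r]$, and the resulting discrepancy need not be $o(1)$.
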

\begin{proof}
Assume otherwise that there is vertex $w\in V(G)\setminus N[r]$
with at least two distinct neighbours in $N(r)$.
We construct a $\sigma$-sparse $\Delta$-regular graph $G'$ as the disjoint union
of $G\setminus w$ and a complete bipartite graph $K_{\Delta-1,\Delta}$ on a vertex partition $X\cup Y$ with $|X|=\Delta$ and $|Y|=\Delta-1$,
in which we further connect each vertex in $N_G(w)$ to a distinct vertex of~$X$.
Here the role of the bipartite graph is only to preserve the $\Delta$-regularity.
The neighbourhood of~$r$ is the same in~$G$ and in~$G'$, so $N_{G'}(r)$ induces
exactly $(1-\sigma)\binom{\Delta}{2}$ edges in~$G'$.
Let $\bI'$ be the random independent set obtained by our procedure on~$G'$
and consider $\bI_r'=\bI \cap\{N_{G'}(r)\}$.
We know as a corollary of Claim~\ref{claim:v in I}
that $\E[|\bI_r|]=\E[|\bI_r'|]$.
Further, we claim that
\begin{equation}\label{eq:G' is worse}
\Prob[\bI_r\neq\varnothing]\leq\Prob[\bI_r'\neq\varnothing].
\end{equation}
In that case,
$\Prob[\bI_r\neq\varnothing]/\E[|\bI_r|]\leq \Prob[\bI_r'\neq\varnothing]/\E[|\bI_r'|]$,
so it is enough to prove Theorem~\ref{thm:independent} for $G'$ and $r$ to deduce it for $G$ and $r$.
Since the number of common neighbours of $N(r)$ outside $N[r]$ is strictly
lower in $G'$ than in $G$,
the iteration of this transformation terminates, which proves Claim~\ref{claim:no outside p3}.

It remains to show~\eqref{eq:G' is worse}.
To do so, we couple $\bI$ and $\bI'$
in such a way that~$\bI_r$ is empty in every outcome where~$\bI_r'$ is.
Assuming that $\A$ and $\pi$ are given, the activation set~$\A'$
and the priority function~$\pi'$ are defined as follows.
First, vertices of $V(G)\cap V(G')=V(G)\setminus\{r\}$ are activated accordingly for $\A$
and $\A'$ and get the same priority, that is
$\A'\cap V(G):= \A\setminus\{r\}$ and $\pi'=\pi$ on this set.
Now, let $U$ be the set of vertices of $\A\cap N(r)\cap N(w)$
that trump all their neighbours in $\A\setminus \{w\}$ for $G$,
i.e. the set of vertices of $N(r)\cap N(w)$ that would be in $\bI$ if we ignore $w$
in Step~\ref{samplestep3}.
If $U\neq\varnothing$,
let $v_U$ be the vertex of $U$ with the highest value $\pi(v_U)$
and let~$w'$ be the unique neighbour of~$v_U$ in~$X$ for the graph~$G'$.
We activate $w'$ for $\A'$ if $w$ is activated for $\A$,
and in this case we set $\pi'(w'):=\pi(w)$.
Next, we activate (for $\A'$) the remaining vertices of $X\cup Y$ independently at random with
probability $\gamma/\Delta$ and we give them priorities $\pi'$ chosen independently,
uniformly at random in $[0,1]$.
The set $\bI'$ is then defined from $\A'$ and $\pi'$ similarly as for $\bI$,
as the set of vertices of $\A'$ whose value by $\pi'$ is larger than all of their neighbours
in $\A'$.

It is clear that $\A'$, $\pi'$, and further $\bI$ are distributed as in our procedure
because these activations and priorities are mutually independent.
Crucially, note that the choice of~$v_U$ and~$w'$ (when $U\neq\varnothing$)
is independent from the value of $\pi(w)$.
If $U=\varnothing$, then $\bI_r=\varnothing=\bI_r'$ regardless of
the state of $w$ and the vertices of $X\cup Y$,
so assume that $U\neq\varnothing$ and $\bI_r'=\varnothing$.
Since $\bI_r$ and $\bI_r'$ coincide on $N(r)\setminus N(w)$,
we know that $\bI_r\subseteq U$.
Further, since $v_U$ is in $U$ but not in $\bI_r'$, this vertex is trumped by $w'$,
so $w'\in\A'$ and $\pi(v_U)=\pi'(v_U)\le\pi'(w')=\pi(w)$. 
Moreover, $\pi(v_U)$ is by definition of $v_U$ the highest value of $\pi(U\cap\A)$,
so $w$ trumps every activated vertex of $U$, and further $\bI_r=\varnothing$.
\end{proof}

It remains to estimate $\Prob[\bI_r\neq\varnothing]$.
To do so, let $P_r$ be the the number of pairs
in $N(r)\cap \bI_r$, i.e. $P_r=\binom{|\bI_r|}{2}$, and let $T_r$ be the number of triples
in $N(r)\cap \bI_r$, i.e. $T_r=\binom{|\bI_r|}{3}$.
Our bound on $\Prob[\bI_r\neq\varnothing]$ relies on the following variation
of the inclusion-exclusion principle:
\begin{equation}\label{eq:inclusion-exclusion} 
\Prob[\bI_r \neq \varnothing] \leq \E[|\bI_r|] - \E[P_r] + \E[T_r].
\end{equation}
To see this, fix $\bI_r$ and
apply the inclusion-exclusion principle to $|\bI_r|$ identical sets of size $1$
to get 
\[\mathbb{1}_{\bI_r \neq \varnothing}\leq |\bI_r| - \binom{|\bI_r|}{2} + \binom{|\bI_r|}{3},\]
where $\mathbb{1}_{\bI_r \neq \varnothing}$ equal $1$ if $\bI_r$ is nonempty and $0$ otherwise.
Taking the expectation of this last inequality proves~\eqref{eq:inclusion-exclusion}.

Let us now estimate $P_r$ and $T_r$ in terms of the following parameter.
Given a pair $uv\in \I_2$,
define 
\[\ell_{uv}:=\frac{1}{\Delta}|N(v) \cap N(u)|.\]
We start with $P_r$.
Note that here and later, we use the terminology $\Eon{s\in S}$ to denote an expectation where $s$ is chosen uniformly from the set $S$.
\begin{claim}\label{claim:pairs}
It holds that
\[
  \E[P_r] = \sigma\cdot\Eon{uv\in \I_2}\left[\frac{1}{(1-\ell_{uv})(2-\ell_{uv})}\right]
  + o_\gamma(1)
  + o_\Delta(1).
\]
\end{claim}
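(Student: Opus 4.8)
The plan is to compute $\E[P_r]$ by linearity of expectation over all pairs $\{u,v\}\in\I_2$, splitting according to whether $u$ and $v$ are both retained in $\bI$. Writing $P_r=\binom{|\bI_r|}{2}=\sum_{\{u,v\}\in\I_2}\mathbb{1}_{u,v\in\bI}$, we get $\E[P_r]=\sum_{\{u,v\}\in\I_2}\Prob[u\in\bI \wedge v\in\bI]$. Each such probability factors as $\Prob[u,v\in\A]\cdot\Pk(\{u,v\})=(\gamma/\Delta)^2\cdot\Pk(\{u,v\})$, so the main task is to estimate $\Pk(\{u,v\})$, the probability that both $u$ and $v$ survive conflict resolution given that both are activated.

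Conditioning on the priorities $\pi(u)=1-x_u$ and $\pi(v)=1-x_v$, the vertex $u$ survives if none of its neighbours trumps it, and likewise for $v$. A vertex $q\in N(u)\setminus N(v)$ trumps $u$ independently with probability $(\gamma/\Delta)x_u$; a vertex $q\in N(v)\setminus N(u)$ trumps $v$ with probability $(\gamma/\Delta)x_v$; and a common neighbour $q\in N(u)\cap N(v)$ (there are $\ell_{uv}\Delta$ of these) fails to trump either exactly when $q$ is not activated, or $q$ is activated but $\pi(q)$ is below both $\pi(u)$ and $\pi(v)$, which has probability $1-(\gamma/\Delta)\max\{x_u,x_v\}$. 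Here I would note that $u$ and $v$ are nonadjacent, that $u,v\notin N(v)\cup N(u)$ respectively contribute nothing, and — crucially — invoke Claim~\ref{claim:no outside p3} so that the event ``$q$ trumps $u$ or $v$'' for $q\in N(u)\cap N(v)$ depends only on the state of $q$, with these events mutually independent across $q$. Multiplying, and using $|N(u)\setminus N(v)|=|N(v)\setminus N(u)|=(1-\ell_{uv})\Delta-o(\Delta)$ (adjusting by $O(1)$ for membership of $u,v$ themselves, absorbed in error terms), we obtain
\[
\Pk(\{u,v\})=\int_0^1\!\!\int_0^1\Bigl(1-\tfrac{\gamma x_u}{\Delta}\Bigr)^{(1-\luv)\Delta}\Bigl(1-\tfrac{\gamma x_v}{\Delta}\Bigr)^{(1-\luv)\Delta}\Bigl(1-\tfrac{\gamma\max\{x_u,x_v\}}{\Delta}\Bigr)^{\luv\Delta}dx_u\,dx_v + o(1).
\]
Substituting $y_u=\gamma x_u$, $y_v=\gamma x_v$ and letting $\Delta\to\infty$, the integrand converges to $e^{-(1-\luv)(y_u+y_v)}e^{-\luv\max\{y_u,y_v\}}$, and multiplying by the prefactor $(\gamma/\Delta)^2$ and dividing the $\gamma^2$ into the substitution leaves $(1/\Delta^2)\int_0^\gamma\int_0^\gamma e^{-(1-\luv)(y_u+y_v)-\luv\max\{y_u,y_v\}}dy_u\,dy_v$. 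As $\gamma\to\infty$ this double integral converges; exploiting the symmetry $y_u\leftrightarrow y_v$, it equals $2\int_0^\infty\int_0^{y_v}e^{-(1-\luv)y_u-y_v}dy_u\,dy_v$, which evaluates to $2\cdot\frac{1}{1-\luv}\bigl(\frac{1}{2}-\frac{1}{2-\luv}\bigr)\cdot\text{(after simplification)}=\frac{1}{2-\luv}$; I would carry out this elementary integral carefully to confirm the stated value $\frac{1}{2-\luv}$. Thus $\Prob[u,v\in\bI]=\frac{1}{\Delta^2}\cdot\frac{1}{2-\luv}+o(1/\Delta^2)$ uniformly over pairs, with the $o(\cdot)$ terms controlled by $o_\gamma(1)$ (from the tail of the $\gamma$-integral) and $o_\Delta(1)$ (from the convergence of the discrete product to the exponential and from the $O(1)$ degree corrections).

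Finally I would sum over $\{u,v\}\in\I_2$. There are $|\I_2|=\sigma\binom{\Delta}{2}$ such pairs by the local sparsity assumption on $r$ (exactly $(1-\sigma)\binom{\Delta}{2}$ edges inside $N(r)$, hence $\sigma\binom{\Delta}{2}$ non-edges). Writing the sum as an average, $\E[P_r]=\sigma\binom{\Delta}{2}\cdot\E_{uv\in\I_2}\bigl[\frac{1}{\Delta^2}\cdot\frac{1}{2-\luv}\bigr]+o_\gamma(1)+o_\Delta(1)=\sigma\cdot\E_{uv\in\I_2}\bigl[\frac{1}{2-\luv}\bigr]+o_\gamma(1)+o_\Delta(1)$, since $\binom{\Delta}{2}/\Delta^2\to\frac12$ — wait, this gives an extra factor $\tfrac12$; I would recheck the constant in the integral, which should therefore come out to $\frac{2}{2-\luv}$ so that the $\tfrac12$ from $\binom{\Delta}{2}/\Delta^2$ cancels it. (This is exactly the kind of constant-tracking that needs care, and is the main technical obstacle: getting the limiting double integral and all the $\gamma/\Delta$ versus $\Delta$-choose-$2$ normalisations to match the clean form $\sigma\E[\frac{1}{2-\luv}]$.) Modulo this bookkeeping, the claim follows. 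The main obstacle is not any deep idea but rather the uniform control of the error terms — in particular justifying that the $O(1)$ corrections to the exponents (from $u,v$ lying in various neighbourhoods) and the approximation of $(1-t/\Delta)^{\Delta}$ by $e^{-t}$ only cost $o_\Delta(1)$ after multiplication by $\sigma\binom{\Delta}{2}\cdot\Delta^{-2}=\Theta(1)$, and that the use of Claim~\ref{claim:no outside p3} genuinely buys independence of the common-neighbour trumping events.
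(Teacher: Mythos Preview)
Your approach is correct and essentially identical to the paper's: condition on $\pi(u),\pi(v)$, write the joint survival probability as a product over the three neighbour classes (the paper does the equivalent thing by fixing $\pi(u)\ge\pi(v)$, which collapses your $\max\{x_u,x_v\}$ to the larger variable), pass to the exponential limit, integrate, and sum over~$\I_2$. Two minor corrections: Claim~\ref{claim:no outside p3} is not needed here, since once $\pi(u),\pi(v)$ are fixed the trumping events for distinct~$q$ are automatically independent (that claim is only used later for triples); and your limiting double integral evaluates to $\tfrac{2}{2-\ell_{uv}}$ (your intermediate expression should read $2\cdot\tfrac{1}{1-\ell_{uv}}\bigl(1-\tfrac{1}{2-\ell_{uv}}\bigr)$, not with the $\tfrac12$), which exactly cancels the $\tfrac12$ from $\binom{\Delta}{2}/\Delta^2$, as you correctly surmise at the end.
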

\begin{proof}
First note that
\[
   \E[P_r]= \sum_{uv\in \I_2}\Pk(u,v)\cdot\Prob[u, v \in \A].
\]
As the activation of the vertices are independent we have
$\Prob[u, v\in \A]=\gamma^2/\Delta^2$
so let us consider $\Pk(u,v)$.

Assume first that $u$ and $v$ are activated and that $\pi(u)\geq\pi(v)$.
In that case, a vertex of $N(u)\cap N(v)$ that trumps $u$ necessarily trumps $v$,
so $u$ and $v$ are in $\I$ exactly when $v$ trumps the vertices of~$N(v)$
and $u$ trumps the vertices of $N(u)\setminus N(v)$.
As a consequence, the probability that both~$u$ and~$v$ are in $\bI$ is
\[
        \left(1-\frac{\gamma}{\Delta}(1-\pi(v))\right)^\Delta\cdot
        \left(1-\frac{\gamma}{\Delta}(1-\pi(u))\right)^{(1-\ell_{uv})\Delta}
        = e^{-\gamma(1-\pi(v))}e^{-\gamma(1-\ell_{uv})(1-\pi(u))}+o_\Delta(1),
\]
where we again write the probability that a vertex $w$ trumps
an activated neighbour~$t$ as $\frac{\gamma}{\Delta}(1-\pi(t))$.

Integrating on the values of $x=1-\pi(u)$ and $y=1-\pi(v)$, we get
\[
   \Prob[u,v\in\bI\text{ and }\pi(u)>\pi(v)| u,v\in\A]=
   \int_0^1\int_0^xe^{-\gamma y}e^{-\gamma(1-\ell_{uv})x} dydx + o_\Delta(1).
\]
Accounting for the symmetry between the case $\pi(u)>\pi(v)$ and $\pi(u)<\pi(v)$,
we deduce
\[
    \Pk(u,v)=2\int_0^1\int_0^xe^{-\gamma y}e^{-\gamma(1-\ell_{uv})x} dydx + o_\Delta(1),
\]
where the dependence is as $\Delta\to\infty$.

Computing the integral, we obtain
%\begin{align*}
%  \int_0^1\int_0^x\mathrm{e}^{-\gamma x}\mathrm{e}^{-\gamma \left(1-\ell_{uv}\right)y} dydx &=
%  \frac{1}{\gamma(1-\ell_{uv})}\int_0^1\mathrm{e}^{-\gamma x}\left(1-e^{-\gamma(1-\ell_{uv})x}\right) dx \\
%  &=
%  \frac{1}{\gamma^2(1-\ell_{uv})}\left(
%        1-e^{-\gamma}-\frac{1-e^{-\gamma(2-\ell_{uv})}}{2-\ell_{uv}}
%\right)\\
%  &= \frac{1}{\gamma^2(1-\ell_{uv})(2-\ell_{uv})} + o\left(\frac{1}{\gamma^2}\right).
%\end{align*}
%
%
\begin{align*}
  \int_0^1\int_0^x\mathrm{e}^{-\gamma y}\mathrm{e}^{-\gamma \left(1-\ell_{uv}\right)x} dydx 
  &=
  \frac{1}{\gamma}\int_0^1\left(1-e^{-\gamma x}\right)\mathrm{e}^{-\gamma(1-\ell_{uv})x} dx \\
  &=
  \frac{1}{\gamma^2}\left(\frac{1-e^{-\gamma(1-\ell_{uv})}}{1-\ell_{uv}}-\frac{1-e^{-\gamma(2-\ell_{uv})}}{2-\ell_{uv}}
\right)\\
  &= \frac{1}{\gamma^2(1-\ell_{uv})(2-\ell_{uv})} + o\left(\frac{1}{\gamma^2}\right).
\end{align*}

Recalling that $|\I_2|=(\sigma/2+o(1))\Delta^2$,
we can conclude that
\begin{align*}
  \E[P_r] & =
  \sum_{uv\in \I_2}\frac{\gamma^2}{\Delta^2}\cdot\frac{2}{\gamma^2(1-\ell_{uv})(2-\ell_{uv})}
  +o_\gamma(1)+o_\Delta(1)\\
  &=\sigma\cdot\Eon{uv\in\I_2}\left[\frac{1}{(1-\ell_{uv})(2-\ell_{uv})}\right]+o_\gamma(1)+o_\Delta(1).\qedhere
\end{align*}
\end{proof}

Let us estimate the number of triples in $\bI_r$.
For that purpose, we first derive an integral expression for $\E[T_r]$.
Set
\[
   f(\ell_1,\ell_2,\ell_3)=
   \frac{1}{(2-\ell_1)(3-\ell_1-\ell_2-\ell_3)}.
\]
We now prove the following expression.
\begin{claim}\label{claim:pkeep triples}
For every triple $uvw\in\I_3$,
\[
   \Pk(u,v,w)\leq
   \frac{2}{\gamma^3}\cdot
   \left[
   f(\luv,\luw,\lvw)+
   f(\lvw,\luv,\luw)+
   f(\luw,\lvw,\luv)
   \right].
   \]
\end{claim}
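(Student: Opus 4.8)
The plan is to mimic the proof of Claim~\ref{claim:pairs} but carefully track the extra conditioning caused by a third vertex. Fix a triple $uvw\in\I_3$ and condition on all three being activated. The key observation, as in the pair case, is that when we order the three priorities, the trumping relations become nested: if $\pi(u)\ge\pi(v)\ge\pi(w)$, then any activated vertex of $N(u)\cap N(w)$ that trumps $u$ also trumps $w$, and similarly for the other containments among the three neighbourhoods restricted to $N(r)$. So for each of the $3!=6$ orderings of $\{\pi(u),\pi(v),\pi(w)\}$, the event that all three survive factors as a product of three ``no neighbour trumps me'' events, where the exponents are governed by how much of each neighbourhood has already been ``covered'' by the higher-priority vertices. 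The union-bound-free accounting works because the vertex with the highest priority must dominate its whole neighbourhood ($\Delta$ vertices), the middle one must dominate its neighbourhood minus its overlap with the top vertex, and the lowest must dominate its neighbourhood minus its overlap with the union of the top two.

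Concretely, first I would write, for the ordering $\pi(u)\ge\pi(v)\ge\pi(w)$ with $x=1-\pi(u)$, $y=1-\pi(v)$, $z=1-\pi(w)$ (so $x\le y\le z$ after reindexing, or rather integrate over $0<x<y<z<1$ with appropriate care), the survival probability as
\[
\Bigl(1-\tfrac{\gamma x}{\Delta}\Bigr)^{\Delta}
\Bigl(1-\tfrac{\gamma y}{\Delta}\Bigr)^{(1-\luv)\Delta}
\Bigl(1-\tfrac{\gamma z}{\Delta}\Bigr)^{(1-\luw-\lvw+\text{(triple overlap)})\Delta}
+ o_\Delta(1),
\]
then upper-bound the last exponent by dropping the (nonnegative) triple-overlap correction, turning the factor into $e^{-\gamma(1-\luw-\lvw)z}$ after letting $\Delta\to\infty$. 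This is exactly where the inequality (rather than equality) in the claim enters, and it is the honest shortcut that keeps the expression clean. Summing the resulting integrals over the six orderings, and using that by symmetry the ``top vertex'' contributes a factor $1/(2-\ell_{\text{top}})$ from integrating out the middle and bottom variables just as in Claim~\ref{claim:pairs}, while integrating out the top variable against the combined exponent $3-\ell_{\text{top}}-(\text{overlaps})$ contributes the second factor, yields the three summands $f(\luv,\luw,\lvw)$, $f(\lvw,\luv,\luw)$, $f(\luw,\luw,\lvw)$ — one for each choice of which vertex of the triple has the largest priority (the three orderings with a given top vertex collapse into a single term after symmetrising the remaining two).

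I expect the main obstacle to be bookkeeping the exponents correctly: one must verify that, given the priority order, the set of vertices the lowest-priority vertex still needs to trump is precisely $N(w)$ minus those already forced to have lower priority, and that $|N(w)\setminus(N(u)\cup N(v))\cap N(r)|\ge(1-\luw-\lvw)\Delta$, so that discarding the triple-intersection term is a valid upper bound. A secondary subtlety is justifying the passage from $(1-\gamma t/\Delta)^{c\Delta}$ to $e^{-\gamma c t}$ uniformly over the (bounded) range of $c\in[0,1]$ with a $o_\Delta(1)$ error, and checking that the remaining integral
\[
2\int\!\!\int\!\!\int_{0<x<y<z<1} e^{-\gamma x}\,e^{-\gamma(1-\luv)y}\,e^{-\gamma(1-\luw-\lvw)z}\,dz\,dy\,dx
\]
evaluates, to leading order in $1/\gamma$, to $\tfrac{2}{\gamma^3}f(\luv,\luw,\lvw)$ — a routine iterated computation analogous to the one already carried out in Claim~\ref{claim:pairs}, where each integration against $e^{-\gamma(\text{coeff})t}$ over an interval of length $O(1)$ produces a factor $\tfrac{1}{\gamma(\text{coeff})}+o(1/\gamma)$, and the three coefficients multiply to $(2-\luv)(3-\luv-\luw-\lvw)$ after the telescoping that also appeared in the pair case.
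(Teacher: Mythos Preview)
Your nesting is reversed, and this is not merely cosmetic: it changes the integral and the claimed bound does not follow.

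When $\pi(u)\ge\pi(v)\ge\pi(w)$, the implication ``if $q$ trumps $u$ then $q$ trumps $w$'' goes the way you say, but the useful contrapositive is that on a common neighbour $q$ the constraint for the \emph{lowest}-priority vertex $w$ (namely $\pi(q)<\pi(w)$) is the binding one, since it already forces $\pi(q)<\pi(u)$. Hence the exact survival probability factors as
\[
\Bigl(1-\tfrac{\gamma z}{\Delta}\Bigr)^{\Delta}
\Bigl(1-\tfrac{\gamma y}{\Delta}\Bigr)^{(1-\lvw)\Delta}
\Bigl(1-\tfrac{\gamma x}{\Delta}\Bigr)^{|N(u)\setminus(N(v)\cup N(w))|},
\]
with the full $\Delta$ exponent on $z=1-\pi(w)$, the \emph{largest} of the three variables. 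Your expression puts the full exponent on $x=1-\pi(u)$, the smallest variable; that is not the exact probability (it drops the stronger constraints on common neighbours), and although it happens to be an upper bound for that reason, it is a different and looser one.

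The consequence is that your triple integral does not evaluate to $f(\luv,\luw,\lvw)/\gamma^3$. Integrating $e^{-\gamma x}e^{-\gamma(1-\luv)y}e^{-\gamma(1-\luw-\lvw)z}$ over $0<x<y<z$ and extending to infinity gives
\[
\frac{1}{\gamma^3}\cdot\frac{1}{(1-\luw-\lvw)(2-\luv-\luw-\lvw)(3-\luv-\luw-\lvw)},
\]
not $\frac{1}{\gamma^3}\cdot\frac{1}{(2-\luv)(3-\luv-\luw-\lvw)}$; worse, the first factor need not be positive (nothing prevents $\luw+\lvw>1$), so the extension to infinity can diverge. The paper's version avoids this precisely because the innermost integral is over the variable carrying coefficient $1$, and the successive accumulated exponents $2-\luv$ and $3-\luv-\luw-\lvw$ are automatically positive. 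Swap the roles of the top and bottom vertices in your decomposition and your plan goes through verbatim.
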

\begin{proof}
Assuming that $u$, $v$ and~$w$ are activated and that
the values $x=1-\pi(u)$, $y=1-\pi(v)$ and~$z=1-\pi(w)$
are given, we aim to express the probability that $\{u,v,w\}\subseteq\bI$.
If moreover $x\ge y\ge z$, i.e.~$\pi(u)\le\pi(v)\le\pi(w)$,
this last event happens exactly when 
none of the vertices of $N(u)$ trump~$u$,
none of the vertices of $N(v)\setminus N(u)$ trump~$v$ and
none of the vertices of $N(w)\setminus(N(u)\cup N(v))$ trump~$w$.
The sizes of these sets are estimated by respectively
$|N(u)|=\Delta$, $|N(v)\setminus N(u)|=(1-\luv)\Delta$
and $|N(w)\setminus(N(u)\cup N(v))|\geq(1-\luw-\lvw)\Delta$.
The probability that $\{u,v,w\}\subseteq\bI$ in this case is therefore at most
 \[
        \left(1-\frac{\gamma}{\Delta}x\right)^\Delta
        \left(1-\frac{\gamma}{\Delta}y\right)^{\Delta(1-\luv)}
        \left(1-\frac{\gamma}{\Delta}z\right)^{\Delta(1-\luw-\lvw)},
 \]
which is bounded from above by its limit
 \[
   p_{xyz}:=
    e^{-\gamma\left(x +
    \left(1-\luv\right)y
    + (1-\luw-\lvw)z
    \right)}.
  \]
Integrating over the possible values of $x$, $y$ and $z$
satisfying $1\geq x\geq y\geq z\geq 0$ gives
\begin{align*}
   \int_0^1\int_{z}^1\int_{y}^1p_{xyz}dxdydz
    &=\frac{1}{\gamma^3}\cdot
    \int_{0}^\gamma\int_{z'}^\gamma\int_{y'}^\gamma
    e^{-x'-(1-\luv)y'
    - (1-\luw-\lvw)z'}dx'dy'dz'\\
    &\leq \frac{1}{\gamma^3}\cdot
    \int_{0}^\infty\int_{z}^\infty\int_{y}^\infty
    e^{-x-(1-\luv)y
    - (1-\luw-\lvw)z}dxdydz\\
    &= \frac{1}{\gamma^3}\cdot\int_{0}^\infty\int_{z}^\infty e^{-(2-\luv)y- (1-\luw-\lvw)z}dydz \\
    &= \frac{1}{\gamma^3}\cdot\frac{1}{2-\luv}\cdot\int_{0}^\infty e^{- (3-\luv-\luw-\lvw)z}dz\\
    &= \frac{1}{\gamma^3}\cdot f(\luv,\luw,\lvw),
\end{align*}
where the first line comes from the change of variables $x'=\gamma x$, $y'=\gamma y$
and $z'=\gamma z$.
To summarise, we have
\[
\Prob\left[u,v,w\in\bI\text{ and }\pi(u)\le\pi(v)\le\pi(w)\,\,\middle|\,\,u,v,w\in\A\right]
\leq \frac{1}{\gamma^3}f(\luv,\luw,\lvw).
\]
Taking into account the six possible orderings of $\pi(u)$, $\pi(v)$ and~$\pi(w)$
and the symmetry of $f$ between the second and the third variable yields
\begin{align*}
   \Pk(u,v,w)\leq&\frac{1}{\gamma^3}(
   2f(\luv,\luw,\lvw)+
   2f(\lvw,\luv,\luw)+
   2f(\luw,\lvw,\luv)
),
\end{align*}
which finishes the proof of the claim.
\end{proof}
The function $f$ satisfies the following bound that isolates its parameters:
\begin{equation}\label{eq:f as sum}
f(\ell_1,\ell_2,\ell_3)+f(\ell_3,\ell_1,\ell_2)+f(\ell_2,\ell_3,\ell_1) \leq
        \frac13\cdot\sum_{i=1}^3\frac{1}{(2-\ell_i)(1-\ell_i)}
\end{equation}
for every $\ell_1,\ell_2,\ell_3\in[0,1]$.
This relation can be proven as follows:
\begin{align*}
f(\ell_1,\ell_2,\ell_3)+f(\ell_3,\ell_1,\ell_2)+f(\ell_2,\ell_3,\ell_1)&=
\frac{1}{3-\ell_1-\ell_2-\ell_3} \cdot \sum_{i=1}^3\frac{1}{2-\ell_i}\\
&\leq \frac13\cdot\sum_{i=1}^3\frac{1}{3-3\ell_i} \cdot \sum_{i=1}^3\frac{1}{2-\ell_i}\\
&\leq \sum_{i=1}^3\frac{1}{3-3\ell_i}\cdot\frac{1}{2-\ell_i}.
\end{align*}
Here, the second line is obtained by convexity of the function $x\mapsto\frac{1}{3-x}$
applied on the left factor and the last step is an application of Chebyshev's sum inequality.

We deduce, from Claim~\ref{claim:pkeep triples} and~\eqref{eq:f as sum},
the following bound on $\E[T_r]$.
\begin{claim}\label{claim:triples}
It holds that
\[\E[T_r]\leq \frac{|\I_3|}{\Delta^3}+
\frac\sigma6\cdot\Eon{uv\in \I_2}\left(
\frac{2}{2-\ell_{uv}}+\luv-1
\right)
+o_\Delta(1).\]
\end{claim}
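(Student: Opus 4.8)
The plan is to combine Claim~\ref{claim:pkeep triples} with the elementary inequality~\eqref{eq:f as sum} and then convert the resulting sum over triples $uvw\in\I_3$ into the claimed expression involving an average over pairs. First I would observe that, by definition, $\E[T_r] = \sum_{uvw\in\I_3}\Prob[u,v,w\in\bI]$, and since $\Prob[u,v,w\in\A] = \gamma^3/\Delta^3$ by independence of activations, we have $\E[T_r] = (\gamma^3/\Delta^3)\sum_{uvw\in\I_3}\Pk(u,v,w) + o(1)$, where I must be slightly careful that the error term from the ``$\leq$'' in Claim~\ref{claim:pkeep triples} and the passage to the limit $p_{xyz}$ is uniform. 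Feeding in Claim~\ref{claim:pkeep triples}, each triple contributes at most $(2/\Delta^3)[f(\luv,\luw,\lvw)+f(\lvw,\luv,\luw)+f(\luw,\lvw,\luv)]$, and by~\eqref{eq:f as sum} this is bounded by $(2/\Delta^3)\cdot\tfrac13\sum_{i}\tfrac{1}{(2-\ell_i)(1-\ell_i)}$, where the three $\ell_i$ are $\luv,\luw,\lvw$.

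Next I would perform a double-counting step to rewrite $\sum_{uvw\in\I_3}\sum_{\text{3 pairs in the triple}}\frac{1}{(2-\ell)(1-\ell)}$ as a sum over pairs: each pair $uv\in\I_2$ lying inside a triple of $\I_3$ is counted once for each vertex $w$ completing it to an independent triple. Writing the triple sum as a sum over pairs $uv\in\I_2$ weighted by the number $m_{uv}$ of such completing vertices $w$, and noting $\sum_{uv\in\I_2} m_{uv} = 3|\I_3|$, one gets $\E[T_r] \leq (2/(3\Delta^3))\sum_{uv\in\I_2} m_{uv}\cdot\frac{1}{(2-\luv)(1-\luv)}$. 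The key reduction here is that $m_{uv}$ is (up to lower-order terms, and an inequality in the right direction) governed by $\luv$: a vertex $w\in N(r)$ completes $uv$ to an independent triple essentially when $w\notin N(u)\cup N(v)$ and $w$ is nonadjacent to both, so $m_{uv} \leq \Delta - |N(u)\cup N(v)\cap N(r)| \leq \Delta(1-\ell_{uv}) + \Delta\luv\cdot(\text{something})$; I expect the clean bound that matches the target is $m_{uv}/\Delta \leq |\I_3|/(\text{normalisation}) $ handled separately plus a term proportional to $\luv$. Concretely, one wants $\sum_{uv\in\I_2} m_{uv}\,g(\luv) \leq 3|\I_3|\cdot(\text{max of }g) $ split off, plus $\sigma\Delta^2\cdot\Eon{uv}[\dots]$, using $|\I_2| = (\sigma/2+o(1))\Delta^2$.

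The cleanest route, and the one I would actually write, is to algebraically simplify $\frac{2}{3}\cdot\frac{m_{uv}}{\Delta}\cdot\frac{1}{(2-\luv)(1-\luv)}$ using the bound $m_{uv} \leq (1-\luv)\Delta + \bigl(\text{correction}\bigr)$ so that the factor $(1-\luv)$ cancels the $(1-\luv)$ in the denominator. After cancellation the summand becomes $\tfrac23\cdot\tfrac{1}{2-\luv}$ plus the leftover from the correction to $m_{uv}$; collecting the ``$|\I_3|/\Delta^3$'' piece separately (this absorbs the part of $m_{uv}$ not bounded by $(1-\luv)\Delta$, via $\sum m_{uv} = 3|\I_3|$) and using the partial-fraction identity $\tfrac{1}{2-\ell} = \tfrac{2}{2-\ell} + (\ell - 1) + \tfrac{(1-\ell)^2 \text{-term}}{\dots}$—more precisely rewriting $\tfrac13\bigl(\tfrac{2}{2-\ell}+\ell-1\bigr) = \tfrac13\cdot\tfrac{\ell(2-\ell)+?}{2-\ell}$, checking it matches $\tfrac{2}{3}\cdot\tfrac{1}{2-\ell}\cdot(1-\ell) + (\text{the }|\I_3|\text{ part})$—yields exactly $\frac{|\I_3|}{\Delta^3} + \frac{\sigma}{6}\Eon{uv\in\I_2}\bigl(\frac{2}{2-\luv}+\luv-1\bigr)$ after multiplying by $|\I_2|/\Delta^2 = \sigma/2+o(1)$.

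The main obstacle I anticipate is the combinatorial bookkeeping around $m_{uv}$: one must argue that replacing ``$w$ completes $uv$ to an independent triple in $N(r)$'' by the simpler count ``$w\in N(r)\setminus(N(u)\cup N(v)\cup\{u,v\})$'' goes in the correct (upper-bound) direction and that the precise split into a $|\I_3|/\Delta^3$ term plus a $\luv$-linear term is tight enough — i.e. that no edges among the completing vertices or between them and $u,v$ are being double-subtracted in a way that breaks the identity. Everything else is: (i) pushing Claim~\ref{claim:pkeep triples} through with $\Prob[u,v,w\in\A]=\gamma^3/\Delta^3$, (ii) applying~\eqref{eq:f as sum}, (iii) double counting, (iv) the $(1-\ell)$ cancellation, and (v) a one-line partial-fraction rearrangement, all of which are routine once the $m_{uv}$ bound is pinned down. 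I would also double-check the error terms are $o_\gamma(1)+o_\Delta(1)$ as in Claim~\ref{claim:pairs}, though the statement as written only claims an inequality, so absorbing them is harmless.
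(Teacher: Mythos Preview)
Your overall strategy matches the paper's: express $\E[T_r]$ via $\Pk$, apply Claim~\ref{claim:pkeep triples} and~\eqref{eq:f as sum}, then double-count over pairs $uv\in\I_2$ weighted by the number $m_{uv}$ of completing vertices~$w$. However, two concrete pieces are missing or garbled.

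First, the bound $m_{uv}\le(1-\luv)\Delta$ is not automatic, and your ``$+(\text{correction})$'' does not fix it. A completing vertex $w$ lies in $N(r)\setminus N(v)$, but a priori the $\luv\Delta$ common neighbours of $u$ and $v$ need not sit inside $N(r)$, so knowing $\luv$ is large does not by itself force many vertices of $N(r)$ to be excluded. This is exactly why the paper earlier established Claim~\ref{claim:no outside p3}: under that reduction the common neighbours of $u,v$ all lie in $N[r]$, and only then does $m_{uv}\le(1-\luv)\Delta$ follow. You flag this as ``the main obstacle'' but never invoke the ingredient that resolves it.

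Second, your extraction of the $|\I_3|/\Delta^3$ term is muddled; it does \emph{not} arise from a correction to $m_{uv}$ via $\sum m_{uv}=3|\I_3|$ in the way you describe. The clean move (and the one the paper uses) is to split
\[
\frac{2}{(2-\ell)(1-\ell)} \;=\; 1 \;+\; \left(\frac{2}{(2-\ell)(1-\ell)}-1\right).
\]
Summing the constant $1$ over all $uvw\in\I_3$ and all three pairs gives exactly $3|\I_3|/(3\Delta^3)=|\I_3|/\Delta^3$. The bracketed remainder is nonnegative on $[0,1)$, so bounding its contribution by replacing $m_{uv}$ with the upper bound $(1-\luv)\Delta$ is legitimate, and after the $(1-\luv)$ cancels one gets the one-line identity
\[
(1-\ell)\left(\frac{2}{(2-\ell)(1-\ell)}-1\right)=\frac{2}{2-\ell}+\ell-1,
\]
which is the expression inside the expectation. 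There is no further ``partial-fraction rearrangement'' needed. Once you plug in Claim~\ref{claim:no outside p3} for the $m_{uv}$ bound and use this subtract-$1$ decomposition instead of your correction-term heuristic, the proof is complete.
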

\begin{proof}
We compute the expected number of triples similarly as for the pairs:
\[\E[T_r]=\sum_{uvw\in\I_3}\Prob[u,v,w\in\A]\cdot\Pk(u,v,w)=\frac{\gamma^3}{\Delta^3}\cdot\sum_{uvw\in\I_3}\Pk(u,v,w).
\]
Further, applying Claim~\ref{claim:pkeep triples} and Equation~\eqref{eq:f as sum} gives
\[
   \E[T_r]
   \leq\frac{1}{3\Delta^3}\cdot\sum_{uvw\in \I_3}\sum_{ab\in\{uv,uw,vw\}}\frac{2}{(2-\ell_{ab})(1-\ell_{ab})}.
\]
We decompose this expression into two parts:
   \begin{equation}\label{eq:Tr decomposed}
   \E[T_r]\leq\frac{|\I_3|}{\Delta^3}+
   \frac{1}{3\Delta^3}\cdot
   \sum_{uvw\in \I_3}\sum_{ab\in\{uv,uw,vw\}}\left(\frac{2}{(2-\ell_{ab})(1-\ell_{ab})}-1\right).
   \end{equation}
   Consider the term 
   \[R:=\sum_{uvw\in \I_3}\sum_{ab\in\{uv,uw,vw\}}\left(\frac{2}{(2-\ell_{ab})(1-\ell_{ab})}-1\right).\]
   A pair $uv\in \I_2$ contributes to this double sum once for each $w\in N(r)$
   such that $uvw\in\I_3$.
   Since such a vertex $w$ cannot be a neighbour of $v$ and, by Claim~\ref{claim:no outside p3},
   each of the $\luv\Delta$ common neighbours of $u$ and $v$ are in $N[r]$,
   we know that there are
   at most $(1-\luv)\Delta$ such vertices $w$. It follows that
   \[
   R\leq \sum_{uv\in\I_2}(1-\luv)\Delta\cdot\left(\frac{2}{(2-\luv)(1-\luv)}-1\right)
   =\Delta\cdot\sum_{uv\in\I_2}\left(\frac{2}{2-\luv}+\luv-1\right).
   \]
   It remains to write the sum as an expectation using $|\I_2|=(\sigma/2+o(1))\Delta^2$:
   \begin{equation}\label{eq:R}
   \frac{R}{\Delta^3}\leq
   \frac{\sigma}{2}\cdot\Eon{uv\in\I_2}\left(\frac{2}{2-\luv}+\luv-1\right)
   + o_\Delta(1).
   \end{equation}
   The claim then follows from Equations~\eqref{eq:Tr decomposed} and~\eqref{eq:R}.
\end{proof}
We are now ready to conclude the proof of the theorem.
By Claims~\ref{claim:pairs} and~\ref{claim:triples},
\begin{align*}
        \E[P_r-T_r] &\geq
\sigma\cdot\Eon{uv\in \I_2}\left[\frac{1}{(1-\ell_{uv})(2-\ell_{uv})}\right]
-\left(\frac{|\I_3|}{\Delta^3}+
\frac\sigma6\cdot\Eon{uv\in \I_2}\left[
\frac{2}{2-\ell_{uv}}+\luv-1
\right]\right)+o_\Delta(1)+o_\gamma(1)\\
&=
\frac\sigma6\cdot\Eon{uv\in \I_2}\left[
\frac{2}{1-\luv}+1-\luv
\right]
-\frac{|\I_3|}{\Delta^3}+o_\Delta(1)+o_\gamma(1).
\end{align*}
The function $g:x\mapsto \frac{2}{1-x}+1-x$ is increasing on $[0,1]$,
so we may bound the last expectation by $g(0)=3$.
Further, a theorem from Rivin~\cite{Riv02} shows that $|\I_3|\leq\sigma^{3/2}\binom{\Delta}{3}$.
It follows that
\begin{equation}\label{eq:pairs - triples}
\E[P_r-T_r] \geq \frac{\sigma}{2}-\frac{\sigma^{3/2}}{6}+o_\Delta(1)+o_\gamma(1)
= \eps_{\ref{col_result}}(\sigma)+o_\Delta(1)+o_\gamma(1).
\end{equation}
Recall that as a consequence to Claim~\ref{claim:v in I},
the expected size of $\bI_r$ is $1-e^{-\gamma}+o_\Delta(1)$.
Using Equation~\eqref{eq:inclusion-exclusion}, we conclude that
\[
\frac{\Prob[\bI_r\neq\varnothing]}{\E[|\bI_r|]}\leq
\frac{\E[|\bI_r|-P_r+T_r]}{\E[\bI_r]}\leq
\frac{1-e^{-\gamma}-\eps_{\ref{col_result}}(\sigma)}{1-e^{-\gamma}}+o_\Delta(1)
=1-\eps_{\ref{col_result}}(\sigma)+o_\Delta(1)+o_\gamma(1),
\]
which proves the theorem.
\end{proof}

\begin{rem}
    As observed at the beginning of the section, the value of $\eps_{\ref{col_result}}$ is what we would expect to obtain if we activated the entire graph (used a single colour) and there were no correlations (created by common neighbours). Ultimately we proved above that small lists behave like lists of size 1 and that, given there are many triples, correlations help pairs more than they help triples, thus improving the colouring. 
\end{rem}

\subsection{One nibble}\label{sub:list colouring}

Our strategy is to build up the colouring gradually through a sequence of partial proper (list-)colourings. Each step of this sequence has the sampling procedure of Subsection~\ref{sub:generating independent set} at its core. In Lemma~\ref{lem:list colouring} below, we prove (with the help of the Lov\'asz local lemma and Theorem~\ref{thm:upward certs}) that the sampling, with positive probability, has suitable properties for continuation of the iterative colouring procedure.

In addition to the notation for partial list colouring given in Subsection~\ref{sub:notation}, we need the following definition, after~\cite{BPP22}.
Given a graph $G$, a vertex subset $A\subseteq V(G)$, and $\mu>0$, we say that $G[A]$ is a {\em $\mu$-quasirandom} subgraph of $G$ if for all not necessarily distinct vertices $u,v\in A$,
\[
\left||N(u)\cap N(v) \cap A|-\mu|N(u)\cap N(v)|\right|\le \sqrt{\Delta}\log^5\Delta.
\]
Note that in the special case $u=v\in A$, the condition specialises to $|d_{G[A]}(u)-\mu d_G(u)| \le \sqrt{\Delta}\log^5\Delta$.
 
\begin{lemma}\label{lem:list colouring}
  For every $\iota>0$ and
  $\gamma\ge\gamma_{\ref{lem:list colouring}}(\iota)=\gamma_{\ref{thm:independent}}(\iota/2)$,
  there is $\Delta_{\ref{lem:list colouring}}=\Delta_{\ref{lem:list colouring}}(\iota,\gamma)$
  such that the following holds.
  Fix some parameters $\sigma>0$ and $\Delta \ge \Delta_{\ref{lem:list colouring}}$
  and set $k=\lceil\Delta/\gamma\rceil$ and $\mu = 1-(1-e^{-\gamma})/\gamma$.
  For every $\sigma$-sparse graph~$G$ with maximum degree~$\Delta$
  and a given $k$-list-assignment $L$, there is a partial proper
  $L$-colouring $c$ such that the residual subgraph $G_c$ is a $\mu$-quasirandom subgraph.
  Furthermore, in the residual list-assignment $L_c$, writing $\Delta'$ for the maximum degree of $G_c$, each list has size at least $k'$ where
  \[
  \left|\frac{k-k'}{\Delta - \Delta'}
  - (1 - \eps_{\ref{col_result}}(\sigma))\right|
  \leq \iota.
  \]
 \end{lemma}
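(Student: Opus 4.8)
The plan is to apply the sampling procedure of Subsection~\ref{sub:generating independent set} to each colour of the palette simultaneously. Concretely, for each colour $a\in[k']$ (where we think of the lists as uniformly-sized in the relevant analysis, after a standard reduction), run Step~\ref{samplestep1}--\ref{samplestep3} with parameter $\gamma$ restricted to the set of vertices whose list contains $a$; then let $c$ assign colour $a$ to the independent set $\bI^{(a)}$ produced, resolving the (rare) event that a vertex lands in two such sets by leaving it uncoloured. Because $k=\lceil\Delta/\gamma\rceil$, the activation probability $\gamma/\Delta$ is exactly $1/k$ up to rounding, so each vertex is activated for a single colour in expectation, and the procedure is essentially $k$ independent copies of the one-colour experiment. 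The residual subgraph $G_c$ is the set of vertices left uncoloured; a vertex $v$ survives roughly when none of its $k$ per-colour experiments retained it, and by Theorem~\ref{thm:independent} (first part), $\Prob[v\text{ coloured}] = (1-e^{-\gamma}) + o(1)$, which is why $\mu = 1-(1-e^{-\gamma})/\gamma$ is the natural survival-into-$N(u)\cap N(v)$ proportion after dividing by $\gamma$ — wait, more precisely $\mu$ is the probability a neighbour stays uncoloured, and one checks $(1-e^{-\gamma})/\gamma$ is the chance it gets coloured, consistent with the $k$-fold union bound at rate $1/k$ per colour.

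The heart of the proof is three concentration-plus-local-lemma arguments, one for each conclusion. First, for $\mu$-quasirandomness: for fixed $u,v\in V(G)$, the quantity $|N(u)\cap N(v)\cap V(G_c)|$ is a sum over common neighbours $w$ of the indicator that $w$ stays uncoloured; its expectation is $\mu|N(u)\cap N(v)| + o(\Delta)$, and I would show concentration within $\sqrt\Delta\log^5\Delta$ using Theorem~\ref{thm:downward certs}, with the exceptional set $\Omega^*$ being outcomes where some vertex is activated for an abnormally large number of colours or some colour is over-activated in a neighbourhood (so that one coordinate change cascades); outside $\Omega^*$ the variable has downward $(s,c)$-certificates with $s = O(\Delta\operatorname{polylog}\Delta)$ and $c = O(\operatorname{polylog}\Delta)$, matching the window. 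Second, for the list-size bound: for each vertex $v$ and each colour $a\in L(v)$, the colour $a$ is removed from $v$'s residual list iff some neighbour of $v$ is coloured $a$; so $k - |L_c(v)|$ counts colours appearing in $N(v)$ after the procedure, and by the inclusion–exclusion estimate underlying Theorem~\ref{thm:independent}, its expectation over the $k$ colours is $(1-\eps_{\ref{col_result}}(\sigma))\cdot(\text{colours lost if we were greedy})$ — more carefully, $\E[k - |L_c(v)|] \approx \E[\text{number of coloured neighbours}]$ minus a saving of order $\eps_{\ref{col_result}}(\sigma)$ times the right normalisation. Meanwhile $\E[\Delta - d_{G_c}(v)] = \E[\#\text{coloured vertices in }N[v]]$, and the ratio of these two expectations is exactly $1 - \eps_{\ref{col_result}}(\sigma) + o(1)$ by Theorem~\ref{thm:independent}. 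Concentration of both numerator and denominator around their means, again via Theorem~\ref{thm:downward certs}, then yields the stated inequality with $\iota$ slack. Third, all the "bad" events (failure of quasirandomness at some pair, failure of the list-size balance at some vertex) each have probability $o(\Delta^{-C})$ for large $C$, each depends on boundedly-many others, so the Lov\'asz local lemma gives a single outcome avoiding all of them.

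In executing this I would track the parameters carefully: set $\gamma_{\ref{lem:list colouring}}(\iota)=\gamma_{\ref{thm:independent}}(\iota/2)$ so that Theorem~\ref{thm:independent} gives a $\iota/2$-error, and spend the remaining $\iota/2$ on the $o_\Delta(1)$ concentration and rounding losses, which forces $\Delta_{\ref{lem:list colouring}}$ large. One subtlety is that $\Delta'$, the maximum residual degree, is itself random; I would handle this by proving the stronger pointwise statement that for \emph{every} $v$, $k - |L_c(v)|$ and $\Delta - d_{G_c}(v)$ are each concentrated, deduce $d_{G_c}(v) = \mu\Delta + o(\Delta)$ for all $v$ (so $\Delta' = \mu\Delta + o(\Delta)$), and then the ratio $\frac{k-k'}{\Delta-\Delta'}$ compares the vertex-wise worst list-loss against the global degree drop — here one must be slightly careful that $k'$ is a lower bound over all $v$ while $\Delta'$ is a max, but since both $k - |L_c(v)|$ and $\Delta - d_{G_c}(v)$ are $(1\pm o(1))$ times their common scale $\mu\Delta$-ish quantity for every $v$, the ratio is pinned down regardless. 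The main obstacle I anticipate is verifying the downward-certificate hypotheses cleanly: one uncoloured-or-recoloured decision for a vertex $w$ depends on the priorities and activations of $w$'s entire neighbourhood across potentially many colours, so naively the influence sets are too large; the resolution, as in Bruhn--Joos, is to put the genuinely bad cascading configurations into $\Omega^*$, bound $\Prob[\Omega^*]$ by a Chernoff/union-bound argument over the $\mathrm{Bin}(\Delta,1/k)$-type activation counts, and check that $\Prob[\Omega^*]\le M^{-2}$ with room to spare — this bookkeeping, rather than any conceptual difficulty, is where the real work lies.
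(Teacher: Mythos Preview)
Your high-level plan (per-colour use of Theorem~\ref{thm:independent}, concentration via Theorem~\ref{thm:downward certs}, then the local lemma, and pinning down the random $\Delta'$ through the $u=v$ case of quasirandomness) matches the paper's. However, you omit a regularisation step that the paper does not treat as routine. Theorem~\ref{thm:independent} is stated for $\Delta$-\emph{regular} graphs, yet for a given colour $a$ the subgraph of $G$ induced by $\{v : a\in L(v)\}$ is in general far from $\Delta$-regular, so the theorem does not apply to it. The paper first embeds $G$ (and extends $L$) into an auxiliary graph $G_1$ in which, for \emph{every} colour $a$, the vertices whose list contains $a$ induce a $\Delta$-regular $\sigma$-sparse subgraph; this is achieved by repeated doubling to regularise $G$, followed by an $N/k$-blow-up with tailored lists so that each colour sees an isomorphic copy of the regular host. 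Your ``standard reduction'' remark addresses list-size uniformity (already hypothesised), not this per-colour degree regularity; without it the uniform estimate $\Prob[v\in\bI_a]\approx(1-e^{-\gamma})/\Delta$ from the first part of Theorem~\ref{thm:independent} is unavailable, and both the $\mu$-quasirandomness and the list-loss calculation break.

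Second, the procedure you describe---$k$ fully independent copies of the sampler, uncolouring any vertex that lands in two of the resulting sets---is not the paper's, and does not deliver the stated $\mu$. The paper has each vertex choose a \emph{single} colour $c_0(v)$ uniformly from $L(v)$ and a single priority $\pi(v)\in[0,1]$; the per-colour activation sets then partition $V(G_1)$, the colour classes $\bI_a$ are automatically disjoint, and $\Prob[v\text{ coloured}]=\sum_{a\in L(v)}\Prob[v\in\bI_a]=(1-e^{-\gamma})/\gamma+O(1/\Delta)$ on the nose. Under your independent-copies scheme the events $\{v\in\bI^{(a)}\}_a$ are independent, and with your rule the probability that $v$ is coloured is roughly $kp(1-p)^{k-1}$ with $p\approx(1-e^{-\gamma})/\Delta$, which equals $(1-e^{-\gamma})/\gamma$ only up to a factor of $e^{-(1-e^{-\gamma})/\gamma}$---a discrepancy in $\mu$ that is constant in $\Delta$ for fixed $\gamma$ and hence not absorbed by the concentration slack. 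The paper's single-colour-per-vertex device is precisely what makes each $\bI_a$ distributed exactly as the sampler's output while keeping the classes disjoint for free.
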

\begin{proof}
  \resetclaimcount
  Set $G=(V,E)$.
In order to have each colour class behaving similarly, we first make $G$ ``colour-wise regular''
using the following statement.
\begin{claim}\label{claim:colour regular}
  There exists $V_1\supseteq V$, a graph $G_1=(V_1,E_1)$ that contains $G=G_1[V]$ as an induced subgraph,
  and a $k$-list-assignment $L_1$ of $G_1$ that extends $L$ (i.e.~$L_1|_{V}=L$)
  such that for every colour $c\in \bigcup_{v\in V_1}L(v)$,
  the subgraph of $G_1$ induced by those vertices~$v$ such that $L(v) \ni c$
  is $\Delta$-regular and $\sigma$-sparse.
\end{claim}
\begin{proof}
  As a first step,
  we embed $G$ in a $\Delta$-regular $\sigma$-sparse graph $H$ by the following iterative process.
  Start with $H=G$ and as long as $H$ is not $\Delta$-regular, take a copy $H'$ of $H$
  and add an edge between a vertex~$v$ in~$H$ and its copy in~$H'$ if $d_H(v)<\Delta$.
  Then set $H:=H\cup H'$ and repeat.
  As each step of this construction does not increase the number of triangles that contain
  each vertex, the graph $H$ is $\sigma$-sparse.
  By giving to each copy of a vertex~$v$ a copy of the list $L(v)$, we can extend $L$ to a $k$-list-assignment of $H$.
  
  Let $N$ be the smallest multiple of $k$ such that $L(v) \subseteq [N]$ for every vertex
  $v \in V$.
  Let us define the graph $G_1=(V_1,E_1)$ as a $N/k$-blow-up of $H$.
  Specifically, $V_1$ contains $N/k$ copies $v_1,\dots,v_{N/k}$
  of each vertex $v$ of~$V(H)$,
  and $u_iv_j\in E_1$ whenever $uv\in E$.
  For every $v\in V(H)$, we take the convention that $v=v_1$, so that $G_1[V]=G$.
  For every vertex $v$ of $V(H)$,
  we define the lists $L(v_1),\dots,L(v_{N/k})$ so that they form a partition of~$[N]$
  and $L(v_1)=L(v)$.

  Note that every colour $c\in[N]$ is contained in exactly one~$L(v_i)$ for each vertex~$v\in V(H)$.
  Consequently, the subgraph of~$G$
  induced by $\sst{v_i\in V_1}{ c\in L(v_i})$ is isomorphic to $H$,
  and therefore is $\Delta$-regular and $\sigma$-sparse.
\end{proof}
We now apply the following random colouring procedure to the graph $G_1$.
First, assign a colour $c_0(v)$ chosen uniformly at random from
the list~$L(v)$ to each vertex~$v\in V_1$.

This yields a (not necessarily proper) colouring $c_0:V_1\to\N$.
In order to resolve conflicts, we use the method discussed in Subsection~\ref{sub:generating independent set}:
independently assign to each vertex~$v\in V_1$ a priority~$\pi(v)$ chosen uniformly at random in $[0,1]$,
and define a set of \emph{uncoloured vertices} as
\[
U=\sst{v\in V_1}{\exists u\in N_{G_1}(v), ~c_0(u)=c_0(v)\text{ and }\pi(u)\ge\pi(v)}.
\]
As intended, the restriction of $c_0$ to $V_1\setminus U$,
the set of vertices that are not uncoloured,
is a partial proper colouring of~$G_1$.
Let $c$ be the partial proper colouring of~$G$ obtained by further restricting~$c_0$
to $V\setminus U$.
The residual subgraph of~$G$ with respect to~$c$ is therefore the induced subgraph $G_c=G_1[V\cap U]$.
Let~$\Delta'$ be the maximum degree of $G_c$, as in the statement of the theorem.
For every vertex~$v\in V\cap U$,
we write $\del(v)=c(N_G(v)\setminus U)$
for the set of colours used by neighbours of~$v$ under $c$.
The residual list-assignment of $G_c$ is defined as
$L_c(v)=L(v)\setminus \del(v)$ for every $v\in V\cap U$.
Further, define $k'= k-(1-\mu)(1-\eps_{\ref{col_result}}(\sigma)+\iota/2)\Delta - \sqrt{\Delta}\log^2\Delta$.
We aim to prove that $c$, $G_c$, $L_c$, $k'$ and~$\Delta'$ satisfy the theorem with positive
probability.

In order to have $|L_c(v)|\geq k'$,
it suffices to prevent the following ``bad'' event for every $v\in V\cap U$:
\begin{equation}\label{eq:bad1}
  |\del(v)| < k-k'
  \tag{$B_v$}.
\end{equation}
In order to ensure that $G'$ is a $\mu$-quasirandom subgraph of~$G$, we need to prevent
the following event~$B_{u,v}$
for every $u,v\in V(G)\cap U$:
\begin{equation}\label{eq:bad2}
\left||N_G(u)\cap N_G(v)\cap U|-\mu|N_G(u)\cap N_G(v)| \right|> \sqrt\Delta\log^5\Delta
.
\tag{$B_{u,v}$}
\end{equation}

Let us prove that preventing~\eqref{eq:bad1} and~$(B_{v,v})$ % No probability here
for every $v\in V\cap U$ is enough to ensure the claimed
bound on the ratio $(k-k')/(\Delta-\Delta')$.
Indeed, if $(B_{v,v})$ does not hold, then
\[|d_{G_c}(v)-\mu d_{G}(v)|=\left||N_{G}(v)\cap U|- \mu |N_G(v)|\right|\le \sqrt{\Delta}\log^5\Delta,\]
so in particular $d_{G_c}(v)=\mu\Delta + O(\sqrt{\Delta}\log^5\Delta)$
for every $v\in V\cap U$,
so $\Delta'=\mu\Delta + O(\sqrt{\Delta}\log^5\Delta)$.
It follows from this and the definition of~$k'$ that
\[
\frac{k-k'}{\Delta-\Delta'} =
\frac{(1-\mu)(1-\eps_{\ref{col_result}}(\sigma)+\iota/2)\Delta+ \sqrt\Delta\log^2\Delta}{\Delta-\mu\Delta + O(\sqrt{\Delta}\log^5\Delta)}= 1-\eps_{\ref{col_result}}(\sigma)+\frac\iota2+ O(\Delta^{-1/2}\log^5\Delta).
\]
If $\Delta_{\ref{lem:list colouring}}$ is large enough, then the
$O(\Delta^{-1/2}\log^5\Delta)$ term above is smaller than~$\iota/2$
for every $\Delta\ge\Delta_{\ref{lem:list colouring}}$.

Given a colour~$a$,
denote~$V_{a}$ the set of vertices of~$V_1$ whose list contains~$a$.
Recall that by Claim~\ref{claim:colour regular},
the graph $G_1[V_{a}]$ is $\sigma$-sparse and $\Delta$-regular.
The key property of the random colouring $c_0$ is that the independent set
$\bI_a=c_0^{-1}(\{a\})\cap U$ of~$G_1[V_a]$ that is coloured~$a$
(after uncolouring) is distributed as the independent set~$\bI$ generated by the procedure
described in Subsection~\ref{sub:generating independent set} applied to the graph~$G_1[V_a]$
with parameter
$\gamma'=\Delta/k=\Delta/\lceil\Delta/\gamma\rceil$.
Therefore, applying Theorem~\ref{thm:independent} with $\iota'=\iota/2$ first gives that
\begin{equation}\label{eq:Ia density}
\left|\Prob[v \in \bI_a] - \frac{1-e^{-\gamma'}}{\Delta}\right|\leq\frac{2}{\Delta^2},
\end{equation}
and second that
\begin{equation}\label{eq:probability Ia empty}
\frac{\Prob[\bI_a\cap N_{G_1}(v)\neq\varnothing]}{\E[|\bI_a\cap N_{G_1}(v)|]}< 1-\eps_{\ref{col_result}}(\sigma)+\frac{\iota}{2}
\end{equation}
for every $v\in V_1$ and $a\in L(v)$.

Set $\mu'=1-(1-e^{-\gamma'})/\gamma'$.
Recall that $\gamma'=\Delta/{\lceil\Delta/\gamma\rceil}=\gamma+O(1/\Delta)$, so
$\mu'=\mu+O(1/\Delta)$.
Since the set $V_1\setminus U$  of coloured vertices is the disjoint union
$\bigcup_{a}\bI_a$ on all the colours of~$L$, Equation~\eqref{eq:Ia density} gives
\[
\left|\Prob[v \in U] - \mu'\right| =
\left|\Prob[v \notin U] - \frac{1-e^{-\gamma'}}{\gamma'}\right| \leq
\sum_{a\in L(v)}\left|\Prob[v \in I_a] -\frac{1-e^{-\gamma'}}{\Delta} \right|
\leq |L(v)|\cdot\frac{2}{\Delta^2}
\leq \frac{2}{\Delta}
\]
for every~$v\in V_1$.
As a consequence, 
\begin{align}\label{eqn:vinU}
\left|\Prob[v\in U]-\mu\right| = O(1/\Delta).
\end{align}

Given~$v\in U$, let us estimate the expected size of $\del(v)$.
A fixed colour $a\in L(v)$ is in $\del(v)$
if at least one neighbour of~$v$ in $G$ is selected in $\bI_a$,
that is, if $N_G(v)\cap\bI_a\neq\varnothing$.
Since $N_{G}(v)$ is a subset of $N_{G_1}(v)$,
it follows from~\eqref{eq:probability Ia empty} that
$\Prob[a\in\del(v)]<
(1-\eps_{\ref{col_result}}(\sigma)+\iota/2)\E[|\bI_a\cap N_{G_1}(v)|]$.
Note further that
$\sum_{a\in L(v)}\E[|\bI_a\cap N_{G_1}(v)|]=\E[\bI_a\setminus U]\leq(1-\mu)\Delta+O(1)$.
Consequently,
\[
\E[|\del(v)|] = \sum_{a\in L(v)}\Prob[a\in\del(v)] \leq
\left(1-\eps_{\ref{col_result}}(\sigma)+\frac{\iota}{2}\right)(1-\mu)\Delta + O(1) = k-k'-\sqrt{\Delta}\log^2\Delta +O(1),
\]
and so we need to prove the following concentration statement.
\begin{claim}\label{claim:emptiness concentration}
  If $\Delta$ is large enough then \[
  \Prob[\left| |\del(v)| - \E[|\del(v)|]\right|
  \ge \sqrt{\Delta}\log\Delta]\le\Delta^{-\frac{1}{2}\log\log\Delta}.
    \]
\end{claim}

Moreover,
$\E[|N_G(u)\cap N_G(v)\cap U|]=\sum_{w\in N_G(u)\cap N_G(v)}\Prob[w\in U]=\mu|N_G(u)\cap N_G(v)|+O(1)$ by~\eqref{eqn:vinU}.
We use the following statement to exclude~\eqref{eq:bad2}.
\begin{claim}\label{claim:set concentration}
  Let $S$ be a set of vertices with $|S|\le \Delta$.
  Then if $\Delta$ is large enough,
    \[
    \Prob[~\left|\,|S\cap U|-\E[|S\cap U|]\,\right| \ge
      \sqrt{\Delta}\log^2\Delta~]\le\Delta^{-\frac{1}{2}\log\log\Delta}.
    \]
\end{claim}
Assuming Claims~\ref{claim:emptiness concentration} and~\ref{claim:set concentration},
we apply the Lov\'asz local lemma to show that, with positive probability, neither of the events~\eqref{eq:bad1} and~\eqref{eq:bad2} occurs.

The event~\eqref{eq:bad2} can only happen when~$u$ and~$v$ are at distance at most~$2$, so we restrict our attention to these cases.
The final state of a vertex in the procedure only depends on the state of the colours and priorities of its neighbours, so the final state of two vertices at distance at least~$4$ are independent.
Taking into account that~\eqref{eq:bad2} concerns the neighbourhood of~$u$ and~$v$,
the number of bad events correlated with~\eqref{eq:bad1} or~\eqref{eq:bad2}
is generously bounded by $d=\Delta^4(\Delta^2+1)<\Delta^7$.
The upper bound $p:=\Delta^{-\frac{1}{2}\log\log\Delta}$ on the probability of a bad event therefore satisfies
$4pd<1$ for $\Delta$ greater than some $\Delta_0$,
so the Lov\'asz local lemma proves the theorem.

It only remains to show that the random variables $|\del(v)|$ and $|N(u)\cap N(v)\cap U|$
are concentrated, as asserted in Claims~\ref{claim:emptiness concentration}
and~\ref{claim:set concentration}.
To do so, we use Theorem~\ref{thm:upward certs}.

\begin{proof}[Proof of Claim~\ref{claim:emptiness concentration}]
  Consider $X=|\del(u)|$ as a random variable on the product space
  $\Omega=\Pi_{v\in V}\Omega_v$,
  where the elements of $\Omega_v$ are the pairs $(c_0(v), \pi(v))$.
  We prove the concentration of $X$ by applying
  Theorem~\ref{thm:upward certs} with no exceptional outcome,
  that is, with $\Omega^*=\varnothing$.

  Let $\omega \in \Omega$. The coordinates that certify~$X(\omega)$ is large are the vertices of~$N(u)$,
  and for each $v\in N(u)$ that is uncoloured in~$\omega$,
  we choose one neighbour $\phi(v)$ of~$v$ that uncolours~$v$,
  that is, such that $c_0(v)=c_0(\phi(v))$ and $\pi(v)\ge \pi(\phi(v))$.
  We then define $I=N(u)\cup\phi(N(u)\cap U)$.
  The set~$I$ has size at most $2\Delta$. 

  Consider another outcome~$\omega'\in\Omega$.
  A colour~$a$ that is in $\del(u)$ for $\omega$ is also in $\del(u)$ for $\omega'$
  unless they differ on a coordinate $x$ corresponding to one of the two following
  situations:
  $c_0(x)=a$ for $\omega$ and $x$ is a neighbour of~$v$; or
  $c_0(x)=a$ for $\omega'$ and $x=\phi(v)$ for some neighbour~$v\in N(u)$ with colour $a$
  (for $\omega$).
  Note that such a vertex $x$ is part of~$I$ and can correspond to each situation for
  only one colour.
  As a consequence, if $\omega$ and $\omega'$ differ in fewer than $t/2$ coordinates of $I$,
  then $X(\omega')$ is at least $X(\omega)-t$.

  Applying Theorem~\ref{thm:upward certs} with $s=2\Delta$, $c=2$ and $t=\sqrt{\Delta}\log\Delta$ gives
\[
\Prob[|X-\E[X]|\ge \sqrt{\Delta}\log\Delta]
\le 4e^{-\frac{\log^2\Delta}{64}}
\]
which is less than $\Delta^{-\frac{1}{2}\log\log\Delta}$
  when $\Delta$ is large enough.
\end{proof}

\begin{proof}[Proof of Claim~\ref{claim:set concentration}]
  Consider $X=S\cap U$ as a random variable on the product space $\Omega=\prod_{v\in V}\Omega_v$,
  where the elements of $\Omega_v$ are the pairs $(c_0(v), \pi(v))$.
  Let us show that $X=|S\cap U|$ has an upward $(s,c)$-certificate.

  We first define a set~$\Omega^*$ of exceptional outcomes as
  \[
  \Omega^*=\left\{ |\sst{v\in N(u)}{c(u)=i}| \ge \log\Delta \text{ for some } i \in \N\right\},
  \]
  i.e. the set of events in which a particular colour appears
  (with respect to~$c_0$) more than $\log\Delta$ times in $S$.

  Let us estimate $\Prob[\Omega^*]$.
  % Adapted from Bruhn and Joos, p18
  For each colour~$x$, let~$s_x$ be the number of vertices of~$S$
  with $x$ in their list.
  Now, the probability that~$S$ contains more than $\log\Delta$ vertices coloured
  with~$x$ is at most 
  \[
  \sum_{i = \log\Delta}^{s_x}{\binom{s_x}{i}}
  \frac{\gamma^i}{\Delta^i}
  \le \sum_{i = \log\Delta}^{s_x}{\binom{\Delta}{i}}
  \frac{\gamma^i}{\Delta^i}
  \le \sum_{i = \log\Delta}^{s_x} \left(\frac{e\Delta}{i}\right)^i\frac{\gamma^i}{\Delta^i}
  \le s_x \cdot \left(\frac{\gamma e}{\log\Delta}\right)^{\log\Delta}.
  \]
  So by the union bound and a simple double-counting argument,
  \[
  \Prob[\Omega^*]\le\sum_{x\in\N}s_x\cdot \left(\frac{\gamma e}{\log\Delta}\right)^{\log\Delta}
  \le |S|\cdot k\cdot\left(\frac{\gamma e}{\log\Delta}\right)^{\log\Delta}
  \le \Delta^2\cdot\left(\frac{\gamma e}{\log\Delta}\right)^{\log\Delta}.
  \]
  For $\Delta$ large enough,
  $\Prob[\Omega^*] \le \Delta^{-\frac{2}{3}\log\log\Delta}$.

  Given an unexceptional outcome $\omega\in \Omega\backslash \Omega^*$,
  define a set of the coordinates that can certify 
  $X(\omega)$ is large as a set $I$ containing the vertices of $S\cap U(\omega)$,
  as well as for each $v\in S$ that is uncoloured in $\omega$,
  one neighbour~$\phi(v)$ that uncoloured $v$,
  that is, such that $c_0(\phi(v))=c_0(v)$ and $\pi(\phi(v))\leq\pi(v)$.
  This yields a set $I:=(S\cap U)\cup\phi(S\cap U)$ of size at most $2\Delta$.  

  Now, consider another unexceptional outcome $\omega'\in\Omega$ such that
  $X(\omega')\leq X(\omega)-t$ for some $t$.
  Note that a vertex $v\in S\cap U(\omega)$ remains uncoloured if the colours and
  priorities of~$v$ and~$\phi(v)$ remain the same,
  so $v$ remains uncoloured in $\omega'$
  unless $\omega'$ and $\omega$ differ in one of the coordinates $v$ or $\phi(v)$.
  Moreover, for a vertex $u\in I$, the set $\phi^{-1}(\{u\})$ is a monochromatic subset of~$S$
  and therefore has size at most $\log\Delta$ in an unexceptional outcome,
  so the coordinate $u$ is part of the certificate of at most $c:=\log\Delta+1$ vertices.
  As consequence, $\omega$ differs from $\omega'$ in at least $t/c$ coordinates.
  
  Applying Theorem~\ref{thm:upward certs} with $s=2\Delta$, $c=\log\Delta+1$
  and $t=\sqrt{\Delta}\log^2\Delta$ gives the bound
  \[
  \Prob[\left|X-\E[X]\right|\ge t] \le
  4e^{-\frac{\log^4\Delta}{32(\log\Delta+1)^2}}+4\Delta^{-\frac{2}{3}\log\log\Delta},
  \]
  which is less than $\Delta^{-\frac{1}{2}\log\log\Delta}$
  when $\Delta$ is large enough.
  This completes the proof of the claim.
\end{proof}
This completes the proof of the lemma.
\end{proof}

\subsection{Proof of Theorem~\ref{col_result}}\label{sub:proof}

Let us wrap things up.
We need to combine Lemma~\ref{lem:list colouring} with a result implicit in the proof of~\cite[Lem.~3.20]{BPP22}. 

\begin{lemma}[\cite{BPP22}]\label{lem:maintain-sparsity}
  For each $\iota>0$ and $0<\sigma<1$, there exists
  $\Delta_{\ref{lem:maintain-sparsity}}=\Delta_{\ref{lem:maintain-sparsity}}(\iota)$ %only iota
  such that if $G$ is a  $\sigma$-sparse graph with maximum degree $\Delta\ge\Delta_{\ref{lem:maintain-sparsity}}$ then every $\mu$-quasirandom subgraph of $G$ is $(\sigma-\iota)$-sparse.
  \end{lemma}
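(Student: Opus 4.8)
The plan is a short double-counting argument. Quasirandomness forces the degrees of $H:=G[A]$ (where $A$ is the vertex set of the given $\mu$-quasirandom subgraph) to shrink by a factor $\approx\mu$ and the codegrees to shrink by a factor $\approx\mu$; feeding these two facts into the identity expressing $e\bigl(H[N_H(v)]\bigr)$ as a sum of codegrees then controls the edge density of each neighbourhood of $H$. I would run the computation for $\Delta$-regular $G$; this is, up to an additive $O(\sqrt{\Delta}\log^5\Delta)$ in all degrees, the situation in which the lemma is invoked in Subsection~\ref{sub:proof} (the graphs produced by successive nibbles being $\mu$-quasirandom subgraphs of regular graphs, hence near-regular).

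First record the degree information: taking $u=v$ in the definition of $\mu$-quasirandomness gives $\bigl|d_H(v)-\mu\Delta\bigr|\le\sqrt{\Delta}\log^5\Delta$ for every $v\in A$, so the maximum degree $\Delta'$ of $H$ satisfies $\Delta'=\mu\Delta+O(\sqrt{\Delta}\log^5\Delta)$ and hence $\binom{\Delta'}{2}=\mu^2\binom{\Delta}{2}+O(\Delta^{3/2}\log^5\Delta)$. Now fix $v\in A$ and count the edges inside its $H$-neighbourhood from both endpoints:
\[
e\bigl(H[N_H(v)]\bigr)=\frac12\sum_{x\in N_H(v)}\bigl|N_H(x)\cap N_H(v)\bigr|=\frac12\sum_{x\in N_G(v)\cap A}\bigl|N_G(x)\cap N_G(v)\cap A\bigr|.
\]
Apply $\mu$-quasirandomness to each pair $(x,v)$ with $x\in N_H(v)$ to replace $\bigl|N_G(x)\cap N_G(v)\cap A\bigr|$ by $\mu\bigl|N_G(x)\cap N_G(v)\bigr|$ at the cost of an additive $\sqrt{\Delta}\log^5\Delta$; since there are at most $\Delta$ terms and dropping the terms with $x\notin A$ only enlarges the sum,
\[
e\bigl(H[N_H(v)]\bigr)\le\frac{\mu}{2}\sum_{x\in N_G(v)}\bigl|N_G(x)\cap N_G(v)\bigr|+O(\Delta^{3/2}\log^5\Delta)=\mu\,e\bigl(G[N_G(v)]\bigr)+O(\Delta^{3/2}\log^5\Delta),
\]
which by $\sigma$-sparsity of $G$ is at most $\mu(1-\sigma)\binom{\Delta}{2}+O(\Delta^{3/2}\log^5\Delta)$. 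Dividing by $\binom{\Delta'}{2}$ and using the degree estimate above,
\[
\frac{e\bigl(H[N_H(v)]\bigr)}{\binom{\Delta'}{2}}\le\frac{1-\sigma}{\mu}+O\bigl(\Delta^{-1/2}\log^5\Delta\bigr),
\]
so $H$ is $\bigl(1-\tfrac{1-\sigma}{\mu}+o(1)\bigr)$-sparse, that is, $\bigl(\sigma-\tfrac{(1-\sigma)(1-\mu)}{\mu}+o(1)\bigr)$-sparse.

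I expect the one genuinely lossy point to be the factor $1/\mu\ge 1$ in the last display: the bound on the numerator $e\bigl(H[N_H(v)]\bigr)$ shrinks only by $\mu$ (we used just the codegree estimate and discarded the vertices outside $A$), whereas the denominator $\binom{\Delta'}{2}$ shrinks by $\mu^2$, so the density bound for $H$ is a factor $1/\mu$ worse than for $G$. The argument therefore delivers ``$(\sigma-\iota)$-sparse'' exactly when $\mu$ is close enough to $1$ that $\tfrac{(1-\sigma)(1-\mu)}{\mu}<\iota/2$, which is the case in all our applications, where $\mu=1-(1-e^{-\gamma})/\gamma$ with $\gamma$ large. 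What then remains is only to choose $\Delta_{\ref{lem:maintain-sparsity}}$ large enough that the $O(\Delta^{-1/2}\log^5\Delta)$ term is below $\iota/2$, which is routine.
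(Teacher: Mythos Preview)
Your double-counting is correct, but it stops one step short and yields only a factor-$\mu$ reduction in the neighbourhood edge count, whereas the lemma (and the iterative proof of Theorem~\ref{col_result}) requires a factor-$\mu^2$ reduction. The fact drawn from~\cite{BPP18+} is effectively
\[
e\bigl(H[N_H(v)]\bigr)\le (1-\sigma)\,\mu^2\binom{\Delta}{2}+O(\Delta^{3/2}\log^5\Delta),
\]
so that dividing by $\binom{\Delta'}{2}=\mu^2\binom{\Delta}{2}(1+o(1))$ gives density $1-\sigma+o(1)$, hence $(\sigma-o(1))$-sparsity \emph{uniformly in $\mu$}. Your bound gives density $(1-\sigma)/\mu+o(1)$, and your claim that this suffices ``in all our applications, where $\mu$ is close to $1$'' is wrong: in Subsection~\ref{sub:proof} one iterates $n\approx\log_\mu(\iota/3)\asymp 1/(1-\mu)$ times, and your estimate multiplies the density $1-\sigma$ by $1/\mu$ at each step, so after all iterations the density becomes $(1-\sigma)/\mu^n\approx 3(1-\sigma)/\iota$, which exceeds $1$ for small $\iota$ no matter how large $\gamma$ is taken. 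Equivalently, your per-step loss $(1-\sigma)(1-\mu)/\mu$ times the number of steps $n$ is of order $(1-\sigma)\log(3/\iota)$, independent of $\gamma$.

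The missing idea is a \emph{second} application of quasirandomness instead of your wasteful step ``dropping the terms with $x\notin A$ only enlarges the sum''. After your first application you have
\[
e\bigl(H[N_H(v)]\bigr)\le\frac{\mu}{2}\sum_{x\in N_G(v)\cap A}\bigl|N_G(x)\cap N_G(v)\bigr|+O(\Delta^{3/2}\log^5\Delta);
\]
switching the order of summation gives $\sum_{x\in N_G(v)\cap A}|N_G(x)\cap N_G(v)|=\sum_{y\in N_G(v)}|N_G(y)\cap N_G(v)\cap A|$, and one more codegree estimate on each term picks up the second factor $\mu$, yielding $\mu^2\,e(G[N_G(v)])$. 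Note that this second step needs the codegree control for pairs $(y,v)$ with $y\in N_G(v)$ but $y$ not necessarily in $A$; the $\mu$-quasirandomness definition as literally recorded in this paper only gives it for $y\in A$, but the version used in~\cite{BPP18+} (and readily available from the concentration in the proof of Lemma~\ref{lem:list colouring}) supplies it.
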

 \begin{proof}
 We take the following fact, without proof, from the end of the proof of~\cite[Lem.~3.20]{BPP22}. For all vertices $u$ in a $\mu$-quasirandom subgraph $G'$ of $G$ 
 \[
 |E(G'[N_{G'}(u)])|<(1-\sigma){\binom{\Delta}2}+O(\Delta^{3/2}\log^5\Delta).
 \]
 Thus $G'$ is $(\sigma-\iota)$-sparse, where $\iota=O(\Delta^{-1/2}\log^5\Delta)$.  
 \end{proof}

To prove our main result, we repeatedly apply Lemma~\ref{lem:list colouring}.
We start with a comparatively small segment of the palette $[\lceil (1-\eps_{\ref{col_result}}(\sigma)+\iota)\Delta \rceil]$ as the list for every vertex.
In each iteration, we apply Lemma~\ref{lem:list colouring} to the residual subgraph. This yields a new residual subgraph of slightly smaller maximum degree but also of slightly smaller guaranteed list-sizes.
We must pad each of the lists using another small, so far unused, segment of $[\lceil (1-\eps_{\ref{col_result}}(\sigma)+\iota)\Delta \rceil]$ so that the ratio $\gamma$ in our application of Lemma~\ref{lem:list colouring} is consistent.
Note that Lemma~\ref{lem:maintain-sparsity} will help to ensure that we can still apply Lemma~\ref{lem:list colouring} in the following iteration, if so needed.
Once the residual subgraph has had its maximum degree fall below a certain threshold, we can greedily colour it to successfully complete the procedure.

In the proof, it will be handy to define for any graph $G$ the (local) sparsity $\sigma(G)$ as the largest $\sigma>0$ for which $G$ is $\sigma$-sparse.

\begin{proof}[Proof of Theorem~\ref{col_result}]
Fix $\iota$ and $\sigma$, and let $G$ and $\Delta=\Delta(G)$ be as in the statement, where $\Delta_{\ref{col_result}}=\Delta_{\ref{col_result}}(\iota)$ will satisfy certain inequalities to be specified during the proof.
We may assume $\iota\le1/3$ for otherwise the statement is trivial (as $\eps_{\ref{col_result}}(\sigma)\le1/3$ for $0<\sigma\le1$).
Let $\gamma>\max\{2,\gamma_{\ref{lem:list colouring}}\}$, where $\gamma_{\ref{lem:list colouring}}=\gamma_{\ref{lem:list colouring}}(\iota/3)$ is as given in Lemma~\ref{lem:list colouring}, and let $\mu = 1-(1-e^{-\gamma})/\gamma$. 
Let $k=\lceil\Delta/\gamma\rceil$, and let $L$ be the $k$-list-assignment defined by $L(u)=[k]$ for all $u\in V(G)$.
We iteratively colour the graph using Lemma~\ref{lem:list colouring}. 

Initialising $\Gamma=G$, $\Lambda=L$, $\kappa=k$, and $K=k$,  we perform the following procedure.
\begin{enumerate}
    \item\label{colstep1} Let $c$ be the partial proper $\Lambda$-colouring of $\Gamma$ given by Lemma~\ref{lem:list colouring}, with specific parameter choices $\gamma\leftarrow\gamma$, $\iota\leftarrow\iota/3$, $\sigma\leftarrow\sigma(\Gamma)$, $\Delta\leftarrow\Delta(\Gamma)$, $k\leftarrow \kappa$, and let $\Gamma_c$ be the $\mu$-quasirandom residual subgraph of maximum degree $\Delta'$ with residual list-assignment $\Lambda_c$ satisfying $|\Lambda_c(u)| \ge k'$ for each $u\in V(\Gamma_c)$.
    \item\label{colstep2}  Arbitrarily delete colours from each list so that each list in $\Lambda_c$ has size exactly $k'$. As $k'\le k'':=\lceil\Delta'/\gamma\rceil$ (see below) we add the elements $\{K+1,\dots,K+1+k''-k'\}$ to each list in order to maintain $\gamma$, the ratio of maximum degree to list-size.
    \item\label{colstep3} If $\Delta'\ge\lfloor\iota\Delta/3\rfloor$ replace $\Gamma$ with $\Gamma_c$, $\Lambda$ with $\Lambda_c$, $\kappa$ with $k''$, $K$ with $K+1+k''-k'$, and return to Step~\ref{colstep1}.
\end{enumerate}
In Step~\ref{colstep2} of the procedure, we used the fact that $k''=\lceil\Delta'/\gamma\rceil \ge k'$ which can be seen by comparing $k''$ to the value of $k'$ given in Lemma~\ref{lem:list colouring}:
\begin{align*}
\frac{k-k''}{\Delta(\Gamma)-\Delta'}
&= \frac{k-\left\lceil\Delta'/\gamma\right\rceil}{\Delta(\Gamma)-\Delta'}
=\frac{\gamma \left\lceil\Delta(\Gamma)/\gamma\right\rceil-\gamma \left\lceil\Delta'/\gamma\right\rceil}{\gamma(\Delta(\Gamma)-\Delta')} \\
&\le\frac1\gamma+\frac1{\Delta(\Gamma)-\Delta'}
<1-\eps_{\ref{col_result}}(\sigma(\Gamma))-\iota/3 +\frac1{\Delta(\Gamma)-\Delta'}
\leq \frac{k-k'+1}{\Delta(\Gamma)-\Delta'}
\end{align*}
where the penultimate inequality holds since $\gamma >2$, $\iota/3\le 1/9$, and $\eps_{\ref{col_result}}(\sigma)\le1/3$ for $0<\sigma\le1$.

In order to apply Lemma~\ref{lem:list colouring} in Step~\ref{colstep1} we must have that $\Delta(\Gamma)\ge\Delta_{\ref{lem:list colouring}}$ where $\Delta_{\ref{lem:list colouring}}=\Delta_{\ref{lem:list colouring}}(\iota/3,\gamma)$ is as given by Lemma~\ref{lem:list colouring} and so we want $\lfloor\iota\Delta_{\ref{col_result}}/3\rfloor\ge \Delta_{\ref{lem:list colouring}}$. 
With each application of Lemma~\ref{lem:list colouring} in Step~\ref{colstep1} we have by $\mu$-quasirandomness that
\[
\Delta(\Gamma_c)\le \Delta(\Gamma)\left(
\mu + \xi(\Delta(\Gamma))
%\frac{\log^5(\Delta(\Gamma))}{\sqrt{\Delta(\Gamma)}}
\right),
\] 
where $\xi(x):=(\log^5 x)/\sqrt{x}$.
Note that $\xi(x)$ is decreasing in $x$ for all $x\ge e^{10}$ (say).
Therefore, since we always maintain that $\Delta(\Gamma)\ge \lfloor \iota\Delta/3\rfloor$, the maximum degree in the residual subgraph will decrease each time by at least a factor $\mu+\iota'$, where $\iota':=\xi(\lfloor \iota\Delta_{\ref{col_result}}/3\rfloor)$, provided $\lfloor \iota\Delta_{\ref{col_result}}/3\rfloor \ge e^{10}$.
It then follows that the procedure will terminate after at most 
$n=\lceil\log_{\mu + \iota'}(\iota/3)\rceil$ steps. 

Throughout the procedure, there may be some decrease in $\sigma(\Gamma)$, which we must control.
In particular, it will suffice to ensure that $\eps_{\ref{col_result}}(\sigma(\Gamma))\ge\eps_{\ref{col_result}}(\sigma)-\iota/3$ holds each time we apply Lemma~\ref{lem:list colouring}.
Note that $\eps_{\ref{col_result}}(x)$ is a continuous, strictly increasing function of $x$ for $0<x<1$.
Let $0<\sigma'<\sigma$ be the unique choice satisfying $\eps_{\ref{col_result}}(\sigma')=\eps_{\ref{col_result}}(\sigma)-\iota/3$ 
and let $\iota''=(\sigma-\sigma')/n$.
By Lemma~\ref{lem:maintain-sparsity} there is some $\Delta_{\ref{lem:maintain-sparsity}}=\Delta_{\ref{lem:maintain-sparsity}}(\iota'')$ such that if $\Delta(\Gamma)\ge \Delta_{\ref{lem:maintain-sparsity}}$ then $\sigma(\Gamma_c)\ge \sigma(\Gamma)-\iota''$. Thus throughout the procedure we have $\sigma(\Gamma)\ge \sigma-n\iota''=\sigma'$, implying that $\eps_{\ref{col_result}}(\sigma(\Gamma))\ge\eps_{\ref{col_result}}(\sigma')=\eps_{\ref{col_result}}(\sigma)-\iota/3$, as desired.
We therefore also want $\lfloor \iota\Delta_{\ref{col_result}}/3\rfloor \ge \Delta_{\ref{lem:maintain-sparsity}}$.

In summary, with $\Delta_{\ref{col_result}}=\frac{4}{\iota}\max\left\{\Delta_{\ref{lem:list colouring}},e^{10},\Delta_{\ref{lem:maintain-sparsity}}\right\}$, we run the procedure above (over at most $n$ iterations).
In each application of Lemma~\ref{lem:list colouring} we have sparsity $\sigma(\Gamma)$ close enough to $\sigma$ that the ratio of colours used, $k-k'$, to reduction in maximum degree, $\Delta(\Gamma)-\Delta'$, is at most
$1-\eps_{\ref{col_result}}(\sigma)+2\iota/3$. Since overall we reduce the maximum degree by at most $\Delta$, we ultimately obtain a partial proper colouring of $G$ using at most 
$(1-\eps_{\ref{col_result}}(\sigma)+2\iota/3)\Delta$ colours. 
Moreover, the ultimate residual subgraph has maximum degree less than $\lfloor\iota\Delta/3\rfloor$, and so we may complete to a proper colouring of $G$ greedily using at most $\iota\Delta/3$ additional colours. 
\end{proof}

\begin{rem}\label{rem:list}
The method of colouring that we employ requires that at each step we draw from a common reservoir of thus far unused colours. This is an obstacle to extending our results to list colouring. Note that the methods of Bonamy {\em et al.}~did include list colouring and the more general notion of correspondence colouring.
\end{rem}

\section{Two applications and one adaptation of Theorem~\ref{col_result}}\label{sec:applications}

\subsection{Proof of Theorem~\ref{strong_bound}}\label{sub:ErdosNesetrilproof}

As mentioned earlier, Bruhn and Joos~\cite{BrJo18} established a good upper bound on the local density in $L(G)^2$ for any $G$. Specifically, they showed that any edge $e$ of a graph $G$ has a neighbourhood in $L(G)^2$ that induces a subgraph of $L(G)^2$ with at most $1.5\Delta(G)^4+5\Delta(G)^3$ edges. 
One might hope for an even better result in this direction, as the corresponding local edge count in $L(G)^2$ for the hypothetical extremal graphs $G$ for the Erd\H{o}s--Ne\v{s}et\v{r}il conjecture is strictly less than $0.8\Delta(G)^4$.
However, another construction based on Hadamard codes shows that the bound is asymptotically best possible. In order to circumvent this obstacle Bonamy {\em et al.}~\cite{BPP22} showed it beneficial to restrict attention to a subgraph of high minimum degree --- as the other vertices may be coloured afterwards greedily --- and the considered subgraph has lower local density. More precisely, with the aid of Bruhn and Joos's result, they showed the following result, which we use here too combined with Theorem~\ref{col_result}.

\begin{theorem}[Bonamy {\em et al.}~\cite{BPP22}]\label{thm:BPP22}
Fix $0\le \eps\le 0.3$. For any graph $G$, let $H=L(G)^2$.
Let $F$ be a maximum subset of the vertices of $H$ that induces a subgraph of minimum degree $\lceil(2-\eps)\Delta(G)^2\rceil$. For any $f\in F$, the number of edges in the subgraph $H[N_{H[F]}(f)]$ induced by the neighbourhood of $f$ (in $H[F]$) is at most
\[
\left(\frac{31}{6}-\frac{128}{3(10-3\eps)}+4\eps-\eps^2\right)\Delta(G)^4.
\]
\end{theorem}

\begin{proof}[Proof of Theorem~\ref{strong_bound}]
Let $0<\iota,\sigma<1$ be constants to be chosen later in the proof, let $\Delta_0=\Delta_{\ref{col_result}}/2$, and let $G$ be a graph with $\Delta(G)\ge \Delta_0$.
Let $\eps=0.228$, $H=L(G)^2$, and $F\subseteq V(H)$ the subset as in Theorem~\ref{thm:BPP22}.
It suffices to show that the subgraph $H[F]$ induced by $F$ has chromatic number satisfying $\chi(H[F])\le \lceil(2-\eps)\Delta(G)^2\rceil$. For then if $\chi(H) > \lceil(2-\eps)\Delta(G)^2\rceil$, there would be some minimal $F' \supsetneq F$ with $\chi(H[F'])> \lceil(2-\eps)\Delta(G)^2\rceil$, and since $H[F']$ would then have minimum degree at least $\lceil(2-\eps)\Delta(G)^2\rceil$, this would contradict the choice of $F$.
By Theorem~\ref{thm:BPP22}, for any $f\in F$, the number of edges in the subgraph $H[N_{H[F]}(f)]$ induced by the neighbourhood of $f$ (in $H[F]$) is at most
\[
\left(\frac{31}{6}-\frac{128}{3(10-3\eps)}+4\eps-\eps^2\right)\Delta(G)^4
\le (1-\sigma)\binom{2\Delta(G)^2}{2},
\]
where we take the choice $\sigma=0.277$.
By Theorem~\ref{col_result}, $\chi(H[F])\le (1-\eps_{\ref{col_result}}(\sigma)+\iota)2\Delta(G)^2$, which one can easily verify is at most $(2-\eps)\Delta(G)^2$ if $\iota\le 0.0004$. This completes the proof.
\end{proof}

\subsection{Proof of Theorem~\ref{Reed_bound}}\label{sub:Reedproof}

%The proof essentially follows the plan of King and Reed~\cite{KiRe16}.
In addition to Theorems~\ref{col_result} and~\ref{thm:Ree98}, we will use the following claimed result. Recall that a graph is said to be {\em $c$-critical} if it is not properly $c$-colourable, but every proper subgraph is.

%\begin{theorem}[King~\cite{Kin11}]\label{thm:Kin11}
%Every graph $G$ satisfying $\omega(G)>\frac23(\Delta(G)+1)$ contains an independent set that intersects every clique of size $\omega(G)$.
%\end{theorem}

\begin{theorem}[Delcourt and Postle~\cite{DePo17}]\label{thm:DePo17}
For each $\eps,\alpha>0$, if a graph $G$ is 
$\lceil(1-\eps)(\Delta(G)+1)\rceil$-critical
and $\omega(G) \le(1-\alpha)(\Delta(G)+1)$, then it is $(1-\alpha/2-\eps)(\alpha-2\eps)$-sparse.
\end{theorem}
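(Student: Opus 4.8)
The plan is to exploit the structural rigidity of $c$-critical graphs where $c = \lceil(1-\eps)(\Delta(G)+1)\rceil$. The cornerstone fact is the Kempe-chain / Gallai-type result on criticality: in any $c$-critical graph $G$, every vertex $v$ of degree close to $c$ has a very constrained local structure, because one can reason about recolourings of $G-v$ that would extend to $v$. Concretely, I would first invoke the well-known consequence of criticality that for each vertex $v$ with $d_G(v)$ not much larger than $c-1$, and for any proper $c$-colouring of $G-v$, all $c$ colours must appear on $N(v)$; combined with $d_G(v)\le\Delta(G)$ and $c\approx(1-\eps)(\Delta(G)+1)$, this already forces roughly $\eps\Delta(G)$ "repeated" colours among the neighbours of $v$, i.e. it forces $N(v)$ to contain many non-adjacent pairs. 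This is the quantitative engine; the number of missing edges in $G[N(v)]$ is at least the number of monochromatic pairs forced by a pigeonhole count of $|N(v)| - c \gtrsim \eps\Delta(G)$ colour-collisions, which on its own gives an $\Omega(\eps\Delta)$-type sparsity but not yet the claimed $(1-\alpha/2-\eps)(\alpha-2\eps)$ bound.

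To sharpen this to the stated bound, the second step is to use the clique-number hypothesis $\omega(G)\le(1-\alpha)(\Delta(G)+1)$. I would argue that if $G[N(v)]$ were \emph{denser} than the claimed threshold, then $N(v)$ would contain a large clique — either directly, via a Turán-type / Ramsey-type extraction, or (more in the spirit of the Reed-conjecture literature) via the observation that in a critical graph a dense neighbourhood together with the forced colour-repetition structure produces a clique of size exceeding $(1-\alpha)(\Delta(G)+1)$, contradicting the hypothesis on $\omega(G)$. The precise accounting is what yields the two factors: one factor of roughly $(\alpha-2\eps)$ coming from how much room the clique bound leaves relative to the critical colour count, and one factor of roughly $(1-\alpha/2-\eps)$ coming from the proportion of $N(v)$ that must be "clique-free excess." I would set up the extremal configuration — a near-clique on a $(1-\alpha)(\Delta+1)$-sized portion of $N(v)$ plus the forced independent-ish remainder — compute its edge count, and check it matches $(1-\alpha/2-\eps)(\alpha-2\eps)\binom{\Delta}{2}$ up to lower-order terms, thereby certifying that exceeding it is impossible.

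The main obstacle I anticipate is making the "dense neighbourhood forces a large clique in a critical graph" step fully rigorous with the exact constants: the naive Turán bound is too weak, so one genuinely needs to feed the criticality-forced colour structure back in — essentially, the colours must partition $N(v)$ into colour classes (independent sets in $G$), at least $c$ of them are nonempty, and a dense $G[N(v)]$ forces most of these classes to be singletons, which in turn forces the non-singleton part plus one representative from each class to be close to a clique. Balancing these counts to extract precisely $\omega(G) > (1-\alpha)(\Delta(G)+1)$ is the delicate part; everything else (the pigeonhole collision count, the passage from edge-deficiency to $\sigma$-sparsity via $\binom{\Delta}{2}$ normalisation, and absorbing $O(\Delta)$ error terms) is routine bookkeeping. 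Since Theorem~\ref{thm:DePo17} is only a \emph{claimed} result that we invoke as a black box, I would present this as the intended proof sketch and defer the constant-chasing to~\cite{DePo17}.
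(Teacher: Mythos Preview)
The paper does not contain a proof of Theorem~\ref{thm:DePo17}. It is introduced as ``the following claimed result'' of Delcourt and Postle~\cite{DePo17} and is invoked purely as a black box in the derivation of Theorem~\ref{Reed_bound}; no argument for it appears anywhere in the text. Consequently there is no ``paper's own proof'' to compare your proposal against.

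Your sketch is in the right spirit for this family of results --- criticality forces every $(c-1)$-colouring of $G-v$ to use all colours on $N(v)$, whence a pigeonhole count yields many non-edges in $G[N(v)]$, and the clique bound $\omega(G)\le(1-\alpha)(\Delta+1)$ is then fed back in to sharpen the constant --- and indeed this is the mechanism behind the Reed/King/Molloy--Reed line of arguments that~\cite{DePo17} refines. But note that your second step is not yet a proof: you assert that a too-dense neighbourhood would ``force a large clique'' and gesture at Tur\'an or a colour-class argument, then propose to reverse-engineer the constants from the extremal configuration. That is a heuristic for guessing the answer, not a derivation; in particular, matching the exact product $(1-\alpha/2-\eps)(\alpha-2\eps)$ requires a specific counting argument that you have not supplied, and the ``dense neighbourhood forces large clique'' implication does not follow from criticality alone without additional structure. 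Since the paper itself defers entirely to~\cite{DePo17}, the appropriate action here is simply to cite that reference rather than attempt an independent proof.
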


\begin{proof}[Proof of Theorem~\ref{Reed_bound}]
Let $\iota>0$ be chosen small enough,
let $\Delta_0 = \max\{\Delta_{\ref{col_result}}(\iota),\Delta_{\ref{thm:Ree98}}\}$, and let $G$ be a graph with $\Delta(G)\ge \Delta_0$.
Writing $\zeta=0.119$, %$0.113$, if using Kelly-Postle
we wish to show that $\chi(G) \le \lceil(1-\zeta)(\Delta(G)+1)+\zeta\omega(G)\rceil$.
We may assume by Theorem~\ref{thm:Ree98} that $\omega(G) < (1-\eps_{\ref{thm:Ree98}})\Delta(G)$. 
Let $\alpha$ be $1-\omega(G)/(\Delta(G)+1)$ and note that $0<\alpha\le 1$.
For a contradiction, we may assume, without loss of generality, that $G$ is $\lceil(1-\zeta\alpha)(\Delta(G)+1)\rceil$-critical.
%%$\lceil(1-\zeta)(\Delta(G)+1)+\zeta\omega(G)\rceil = \lceil(1-\zeta)(\Delta(G)+1)+\zeta(1-\alpha)(\Delta(G)+1)\rceil = \lceil(1-\zeta+\zeta(1-\alpha))(\Delta(G)+1)\rceil = \lceil(1-\zeta\alpha)(\Delta(G)+1)\rceil$
Now we may apply Theorem~\ref{thm:DePo17} with $\alpha$ and $\eps=\zeta\alpha$ to deduce that $G$ is $\sigma$-sparse with the choice
$\sigma= (1-\alpha/2-\zeta\alpha)(\alpha-2\zeta\alpha)$. %\alpha-\alpha^2/2-2\zeta\alpha$. if using Kelly-Postle
We then have from Theorem~\ref{col_result} that $\chi(G) \le (1-\eps_{\ref{col_result}}(\sigma)+\iota)\Delta(G)$.
A quick computation with small enough $\iota>0$ checks that
\(
\zeta\alpha \le \eps_{\ref{col_result}}(\sigma)-\iota
\)
for $0<\alpha\le 1$, giving a contradiction to the fact that $G$ is $\lceil(1-\zeta\alpha)(\Delta(G)+1)\rceil$-critical. This completes the proof.
%
%Due to the fact that the expression $(1-\alpha/2-\eps)(\alpha-2\eps)$ is monotone increasing for $\alpha\in [0,1]$, we may assume without loss of generality that $G$ is $\lceil(1-\zeta)(\Delta(G)+1)\rceil$-critical and that $\omega(G) \le \iota(\Delta(G)+1)$.
%Now we may apply Theorem~\ref{thm:DePo17} with $\alpha=1-\iota$ and $\eps=\zeta$ to deduce that $G$ is $\sigma$-sparse with the choice
%$\sigma=((1-2\zeta)^2-\iota^2)/2$.
%We then have from Theorem~\ref{col_result} that $\chi(G) \le (1-\eps_{\ref{col_result}}(\sigma)+\iota)\Delta(G)$. A quick calculation with small enough $\iota>0$ checks that
%\(
%\zeta \le \eps_{\ref{col_result}}(((1-2\zeta)^2-\iota^2)/2)-\iota
%\),
%giving a contradiction to the fact that $G$ is $\lceil(1-\zeta)(\Delta(G)+1)\rceil$-critical. This completes the proof.
\end{proof}

\begin{rem}
To our knowledge, the work of~\cite{DePo17} has not yet completed peer review. A slightly weaker version of Theorem~\ref{thm:DePo17} follows from a result of Kelly and Postle~\cite[Thm.~4.1]{KePo20}. In particular, the graph $G$ can be guaranteed to be $(\alpha-\alpha^2/2-2\eps)$-sparse rather than $(1-\alpha/2-\eps)(\alpha-2\eps)$-sparse.
For the avoidance of any doubt, the reader might prefer us to apply that result instead, in which case we would deduce a weaker bound in the conclusion of Theorem~\ref{Reed_bound}, with $0.113$ and $0.887$ in place of $0.119$ and $0.881$.
\end{rem}

\subsection{Proof sketch for Theorem~\ref{Vu_bound}}\label{sub:Vuproof}

  In order to convince the reader of Theorem~\ref{Vu_bound},
  we explain how to prove Theorem~\ref{thm:independent} under the stronger assumption that the maximum codegree of $G$ is at most $(1-\cosigma)\Delta(G)$, where the value
  $\eps_{\ref{col_result}}(\sigma)$ is replaced by
  $\eps_{\ref{Vu_bound}}(\cosigma) = \max\{\cosigma/(1+2\cosigma) - (2\cosigma)^{3/2},\eps_{\ref{col_result}}(\cosigma)\}$ in the statement.
  Specifically, we show the following. 
  \begin{theorem}[Theorem~\ref{thm:independent} specialised to maximum codegree]\label{thm:independent with codegree}
    For every $\iota>0$, there are $\Delta_{\ref{thm:independent with codegree}}=\Delta_{\ref{thm:independent with codegree}}(\iota)$
    and $\gamma_{\ref{thm:independent with codegree}}=\gamma_{\ref{thm:independent with codegree}}(\iota)$ such that the following holds.
    Let $G$ be a $\Delta$-regular graph
    with maximum codegree at most $(1-\cosigma)\Delta$ and $\Delta\geq\Delta_{\ref{thm:independent with codegree}}$,
    and let
    $\bI$ be a random independent set obtained by the algorithm described
    in Section~\ref{sub:generating independent set} with some parameter
    $\gamma\geq\gamma_{\ref{thm:independent with codegree}}$.
    For every vertex $r\in V(G)$, 
    \[
    \left|\Prob[r\in \bI]-\frac{1-e^{-\gamma}}{\Delta}\right| \le \frac{2}{\Delta^2}.
    \]
    Moreover, setting $\bI_r = N(r) \cap \bI$, it holds that
    \[
    \frac{\Prob\left[\bI_r \neq \varnothing\right]}{\E\left[|\bI_r|\right]}
    \leq 
    1-\eps_{\ref{Vu_bound}}(\cosigma)+\iota.
    \]
  \end{theorem}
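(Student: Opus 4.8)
The plan is to mirror the proof of Theorem~\ref{thm:independent}, feeding in the codegree hypothesis at two points. First, the estimate $|\Prob[r\in\bI]-(1-e^{-\gamma})/\Delta|\le 2/\Delta^2$ is precisely Claim~\ref{claim:v in I}, whose proof uses no sparsity assumption whatsoever, so it transfers verbatim; in particular $\E[|\bI_r|]=1-e^{-\gamma}+o(1)$. Second, for the ratio it suffices to prove the two bounds $\Prob[\bI_r\neq\varnothing]/\E[|\bI_r|]\le 1-\eps_{\ref{col_result}}(\cosigma)+\iota$ and $\le 1-(\cosigma/(1+2\cosigma)-(2\cosigma)^{3/2})+\iota$ separately. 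The first is immediate from Theorem~\ref{thm:independent}: a maximum codegree of $(1-\cosigma)\Delta$ forces $G[N(v)]$ to have maximum degree at most $(1-\cosigma)\Delta$, hence at most $(1-\cosigma)\binom\Delta2+O(\Delta)$ edges, so $G$ is $(\cosigma-o(1))$-sparse and we may invoke Theorem~\ref{thm:independent}, using that $\eps_{\ref{col_result}}$ is increasing and continuous to absorb the $o(1)$. So the real work is the second bound.

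For that I would re-run the inclusion--exclusion~\eqref{eq:inclusion-exclusion}. First one must adapt Claim~\ref{claim:no outside p3}: the auxiliary graph used there glues in a copy of $K_{\Delta-1,\Delta}$, which has codegree $\Delta-1$, so it should be replaced by a $\Delta$-regular filler graph of small codegree; the coupling argument itself is untouched. Claims~\ref{claim:v in I}, \ref{claim:pairs} and~\ref{claim:pkeep triples} then hold exactly as stated. The new ingredients are: (i) the codegree hypothesis gives $\ell_{uv}\le 1-\cosigma$ for \emph{every} pair $u,v$; and (ii) for non-adjacent $u,v\in N(r)$, writing $\delta_u=d_{G[N(r)]}(u)/\Delta$, the Moore-type inequality $\ell_{uv}\ge\delta_u+\delta_v-1$ holds, since common neighbours lie in $N[r]$ by the adapted Claim~\ref{claim:no outside p3}. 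Using Claim~\ref{claim:pairs} and~(ii), I would lower-bound $\E[P_r]=\tfrac2{\Delta^2}\sum_{uv\in\I_2}\tfrac1{2-\ell_{uv}}+o(1)\ge\tfrac2{\Delta^2}\sum_{uv\in\I_2}\tfrac1{2-\max(0,\delta_u+\delta_v-1)}+o(1)$ and carry out the resulting optimisation over degree sequences $(\delta_u)$ (with $\delta_u\le 1-\cosigma$ and $\sum_u\delta_u=(1-\sigma)\Delta$, where $\sigma$ is the local non-edge density) and over which pairs are non-edges; the extremal shapes are a clique plus isolated vertices and a regular graph plus isolated vertices, both delivering $\E[P_r]\ge\cosigma/(1+2\cosigma)-o(1)$. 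For $\E[T_r]$ I would use Claim~\ref{claim:pkeep triples} but keep the $f$-terms rather than collapsing them via~\eqref{eq:f as sum} — that collapse is wasteful exactly when some $\ell_{uv}$ is close to $1$ — together with the bound $\#\{w:uvw\in\I_3\}\le(1-\ell_{uv})\Delta$ from the adapted Claim~\ref{claim:no outside p3} and Rivin's estimate $|\I_3|\le\sigma^{3/2}\binom\Delta3$, keeping $\E[T_r]\le(2\cosigma)^{3/2}+o(1)$ (the range $\sigma\ge 2\cosigma$ being already covered by the $\eps_{\ref{col_result}}(\sigma)\ge\eps_{\ref{col_result}}(2\cosigma)$ branch). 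Dividing $\E[|\bI_r|]-\E[P_r]+\E[T_r]$ by $\E[|\bI_r|]$ and letting $\gamma\to\infty$ then gives the second bound.

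The hard part will be the optimisation producing $\E[P_r]\ge\cosigma/(1+2\cosigma)$: the codegree condition is a \emph{two-sided} constraint on the $\ell_{uv}$, and shrinking local degrees simultaneously enlarges $|\I_2|$ but shrinks every $\ell_{uv}$, so the clean constant only appears after tracking the joint effect of the degree sequence on $|\I_2|$ and on $\sum 1/(2-\ell_{uv})$ through the \emph{pointwise} Moore bound — a crude averaging step (Jensen applied to $\bar\ell=\E_{uv\in\I_2}[\ell_{uv}]$) is demonstrably too weak here, since it loses the correlation between having small $\ell_{uv}$ and having low $\delta_u,\delta_v$. A secondary nuisance is re-checking that ``maximum codegree $\le(1-\cosigma)\Delta$'' is the right invariant to carry through the auxiliary constructions (the adapted Claim~\ref{claim:no outside p3}, and, for the full colouring application, the analogues of Lemma~\ref{lem:list colouring} and Lemma~\ref{lem:maintain-sparsity}).
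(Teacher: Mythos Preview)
Your outline matches the paper's, and the two reductions (to the $\eps_{\ref{col_result}}$-branch via monotonicity of $\eps_{\ref{col_result}}$, and to the range $\sigma<2\cosigma$ via $\eps_{\ref{col_result}}(2\cosigma)\ge\cosigma/(1+2\cosigma)-(2\cosigma)^{3/2}$) are exactly the ones the paper uses. But you are making two steps harder than necessary, and the paper shows how to bypass both.

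First, Claim~\ref{claim:no outside p3} is not needed at all. The bound $\ell_{uv}\ge\delta_u+\delta_v-1$ is plain inclusion--exclusion inside $N(r)$ (count common neighbours of $u$ and $v$ that happen to lie in $N(r)$); it says nothing about where \emph{other} common neighbours live, so there is no filler-graph construction to re-engineer. And for $\E[T_r]$ the paper drops the $f$-machinery entirely: bounding the integrand $p_{xyz}$ by~$1$ in the proof of Claim~\ref{claim:pkeep triples} already gives $\Pk(u,v,w)\le 6/\gamma^3$, hence
\[
\E[T_r]\le \frac{6|\I_3|}{\Delta^3}\le \sigma^{3/2}<(2\cosigma)^{3/2}
\]
by Rivin and the assumption $\sigma<2\cosigma$. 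No refinement of~\eqref{eq:f as sum} and no count of extensions $\#\{w:uvw\in\I_3\}$ is needed.

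Second, the ``hard optimisation'' for $\E[P_r]$ has a clean closed-form solution that avoids any case analysis over extremal degree sequences. Write $\sigma_u=|N(r)\setminus N[u]|/\Delta$ (so $\sigma_u\ge\cosigma$ by the codegree bound applied to the pair $\{r,u\}$) and cap $\sigma_u'=\min(\sigma_u,1-\cosigma)$. Then $\ell_{uv}\ge 1-\sigma_u'-\sigma_v'$, and the key step is the algebraic \emph{decoupling}
\[
\frac{2}{1+\sigma_u'+\sigma_v'}
=2-\frac{2\sigma_u'}{1+\sigma_u'+\sigma_v'}-\frac{2\sigma_v'}{1+\sigma_u'+\sigma_v'}
\ge \frac{1+\cosigma-\sigma_u'}{1+\cosigma+\sigma_u'}+\frac{1+\cosigma-\sigma_v'}{1+\cosigma+\sigma_v'},
\]
where each fraction is bounded separately using $\sigma_v',\sigma_u'\ge\cosigma$ in the respective denominator. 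After double-counting (each $u$ lies in $\sigma_u\Delta$ pairs of $\I_2$) and replacing $\sigma_u$ by the smaller $\sigma_u'$, one is left with $\Delta\sum_{u\in N(r)}f(\sigma_u')$ for the single-variable concave function $f(x)=x\,\frac{1+\cosigma-x}{1+\cosigma+x}$ on $[\cosigma,1-\cosigma]$, whose minimum over that interval is its left-endpoint value $\cosigma/(1+2\cosigma)$. This yields $\E[P_r]\ge\cosigma/(1+2\cosigma)+o(1)$ directly. Your intuition that pointwise control (not Jensen on $\bar\ell$) is required is correct; the decoupling inequality is precisely the device that turns it into a one-variable problem.
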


  \begin{proof}[Sketch of the proof]
    We explain the necessary modifications to the proof of Theorem~\ref{thm:independent}.
%    to obtain Theorem~\ref{thm:independent with codegree} instead.

    We use the same setup and notation as in the proof of Theorem~\ref{thm:independent}.
    In particular, $\sigma$ denotes the local sparsity of $r$
    i.e.~$G[N(r)]$ contains exactly $(1-\sigma)\binom{\Delta}{2}$ edges.
    Using the intermediary claims of the proof of Theorem~\ref{thm:independent} and the extra assumption,
    we show the following alternative version of~\eqref{eq:pairs - triples},
    where $\eps_{\ref{col_result}}(\sigma)$ is substituted by
    $\eps_{\ref{Vu_bound}}(\cosigma)$:
    \begin{equation}\label{eq:pairs - triples for vu}
      \E[P_r-T_r] \geq \eps_{\ref{Vu_bound}}(\cosigma) + o(1).
%      = \max\{\cosigma/(1+2\cosigma) - (2\cosigma)^{3/2},\eps_{\ref{col_result}}(\cosigma)\} + o(1).
    \end{equation}
    Note first that $\cosigma\leq\sigma - \sigma/\Delta=\sigma+o(1)$
    because~$G[N(r)]$
    has maximum degree at most~$(1-\cosigma)\Delta$
    and thus at most $(1-\cosigma)\Delta^2/2$ edges.
    Furthermore $\eps_{\ref{col_result}}(\cosigma)\le \eps_{\ref{col_result}}(\sigma)+o(1)$
    because $x\mapsto\eps_{\ref{col_result}}(x)$ is a smooth, increasing function.
    So in the case that
    $\eps_{\ref{col_result}}(\cosigma)$ is larger than the other argument
    $\cosigma/(1+2\cosigma) - (2\cosigma)^{3/2}$,
    \eqref{eq:pairs - triples for vu} is implied by its
    original form~\eqref{eq:pairs - triples}.
    We may therefore assume hereafter that
    $\eps_{\ref{Vu_bound}}(\cosigma)$ is equal to 
    $\cosigma/(1+2\cosigma) - (2\cosigma)^{3/2}$.
    This happens when $\cosigma$ is smaller than some constant smaller than $1/2$.
    Since $\eps_{\ref{col_result}}(2x)\geq\eps_{\ref{Vu_bound}}(x)$
    when $0<x<1/2$, we may also assume that $\sigma<2\cosigma$.
    
  Given a vertex~$u\in N(r)$, define $\sigma_u:= |N(r)\setminus N[u]|/\Delta$.
  In words, $\sigma_u\Delta$ is the number of nonneighbours of $u$ in $N(r)$.
  We know from the codegree hypothesis that $\sigma_u\geq\cosigma$.
  Let us set $\sigma_u'=\min\{\sigma_u, 1-\cosigma\}$.
  
  We first show how to estimate~$\E[P_r]$ given
  Claim~\ref{claim:pairs} in the proof of Theorem~\ref{thm:independent}.
  We claim that
  \begin{equation}\label{eq:pairs with d'}
  \frac{2}{2-\ell_{uv}}\geq \frac{2}{1+\sigma_u'+\sigma_v'}
  \end{equation}
  for every $uv\in\I_2$.
  Indeed, $u$ and $v$ have at least $\Delta-\sigma_u\Delta-\sigma_v\Delta$ common neighbours in~$N(r)$,
  so $\ell_{uv}\geq 1-\sigma_u-\sigma_v$.
  Further, this lower bound is negative whenever~$\sigma_u$ or~$\sigma_v$
  is larger than~$1-\cosigma$ because $\sigma_u,\sigma_v\geq\cosigma$,
  so capping these variables to $1-\cosigma$
  keeps the inequality valid.
  As a consequence, it also holds that $\ell_{uv}\geq 1-\sigma_u'-\sigma_v'$,
  which implies~\eqref{eq:pairs with d'}.

  We further bound $2/(2-\ell_{uv})$ from below by
  \[
  \frac{2}{1+\sigma_u'+\sigma_v'}=
  2-\frac{2\sigma_u'}{1+\sigma_u'+\sigma_v'}-\frac{2\sigma_v'}{1+\sigma_u'+\sigma_v'}\geq
  2-\frac{2\sigma_u'}{1+\sigma_u'+\cosigma}-\frac{2\sigma_v'}{1+\cosigma+\sigma_v'}=
  \frac{1-\sigma_u'+\cosigma}{1+\sigma_u'+\cosigma}+\frac{1+\cosigma-\sigma_v'}{1+\cosigma+\sigma_v'}.
  \]
  Summing on all $uv\in\I_2$,
  \begin{align*}
  \sum_{uv\in\I_2}\frac{2}{2-\ell_{uv}}&\geq
  \sum_{uv\in\I_2}\left(\frac{1-\sigma_u'+\cosigma}{1+\sigma_u'+\cosigma}+\frac{1+\cosigma-\sigma_v'}{1+\cosigma+\sigma_v'}\right)\\
  &= \sum_{u\in N(r)}\sigma_u\frac{1-\sigma_u'+\cosigma}{1+\sigma_u'+\cosigma}\\
  &\geq \sum_{u\in N(r)}\sigma_u'\frac{1-\sigma_u'+\sigma}{1+\sigma_u'+\cosigma}.
  \end{align*}
  Here the second line is deduced by double-counting,
  $\sigma_u$ being equal to the number of terms of the previous sum in which $u$ appears.
  Since the function $f:x\mapsto x\frac{1-x+\cosigma}{1+x+\cosigma}$ is concave on
  $[\cosigma,1-\cosigma]$,
  it is bounded from below by comparing evaluations on the endpoints of its range, namely
  $f(\cosigma)=\cosigma/(1+2\cosigma)$ and
  $f(1-\cosigma)=(1-\cosigma)\cosigma$.
  The latter is at least the former when $0\le \cosigma\leq1/2$,
  and so
  \[
  \sum_{uv\in\I_2}\frac{2}{2-\ell_{uv}}\geq\Delta\cdot\frac{\cosigma}{1+2\cosigma}.
  \]
  Now it follows from Claim~\ref{claim:pairs} in the proof of Theorem~\ref{thm:independent} that
  \begin{equation}\label{eq:pairs with codegree}
  \E[P_r]\ge
  \sigma\cdot\Eon{uv\in \I_2}\left[\frac{1}{2-\ell_{uv}}\right] + o(1)=
  \frac{1}{\Delta^2}\cdot\sum_{uv\in\I_2}\frac{2}{2-\ell_{uv}} + o(1) \geq
  \frac1\Delta\cdot\frac{\cosigma}{1+2\cosigma}+o(1).
  \end{equation}
  (Note that the first inequality follows from the fact that $x/((1-x)(2-x)\ge 0$ for $x\in[0,1)$.)

% We now use Claim \ref{claim:noneg_is_worst} from theorem 2.1, letting $\mathcal{C}$ be the complex of all nonadjacent pairs and individual vertices in $N(r)$ and let $W(I)=\Prob[I\in\bI]$ for all $I$ in $\mathcal{C}$.
% We can upper bound $\Prob[\bI \neq \varnothing]$ by $\Prob[\bI_W \neq \varnothing]$ where $\bI_W$ corresponds to the natural noneg extension of $W$ to $\I$.
% $W(v)=\frac1\Delta+o(\Delta)$ for all $v\in N(r)$ and  Claim~\ref{claim:pairs} in the proof of Theorem~\ref{thm:independent} that we have 
% $
% W(uv)\geq \frac1\Delta\frac{2}{1+\sigma_u'+\sigma_v'}
% $ for each $uv\in \I_2\subset\mathcal{C}$.
% By Claim \ref{claim: assume_min_cor} we can assume that $W(uv)$ is minimised for each such $uv$ and thus that $W(uv)=\frac1\Delta\frac{2}{1+2\cosigma}$. 
% By the definition of $\bI_W$ we obtain the following bound 
% \[
% \E[P_u-T_u] \geq \sum_{uv\in\I_2}\frac1\Delta\frac{2}{1+2\cosimga}\left(1-\sum_{uvw\in \I_3}\Prob[w\in\bI]\right) \geq \frac1\Delta \frac{\cosigma}{1+2\cosigma}\left(1-\cosigma\right),
% \]
% where in the last step we used that an $uv\in \I_2$ lies in at most $\min\{\sigma_u',\sigma_v'\}$ triangles. 

% We can in turn lower bound this by counting pairs and triples and so obtain the following 

  We estimate $E[T_r]$ in rougher way.
  Following the proof of Claim~\ref{claim:pkeep triples} of Theorem~\ref{thm:independent}
  we have 
  \[\int_0^1\int_{z}^1\int_{y}^1p_{xyz}dxdydz\leq\frac{1}{\gamma^3},\]
  which further leads to $\Pk(u,v,w)\le6/\gamma^3$
  for every $uvw\in\I_3$.
  Consequently,
  \[
  \E[T_r]=\sum_{uvw\in\I_3}\Prob[u,v,w\in\A]\cdot\Pk(u,v,w)=
  \frac{\gamma^3}{\Delta^3}\cdot\sum_{uvw\in\I_3}\Pk(u,v,w)\leq\frac{6|\I_3|}{\Delta^3}.
  \]
  We know that $|\I_3|\leq\sigma^{3/2}\binom{\Delta}{3}$ by a result of Rivin~\cite{Riv02},
  so $\E[T_r]\leq \sigma^{3/2}$.
  
  Putting everything together gives
  \[
  \E[P_r-T_r] \geq \frac{\cosigma}{1+2\cosigma}-\sigma^{3/2}+o(1),
  \]
  which proves~\eqref{eq:pairs - triples for vu} because $\sigma<2\cosigma$.
\end{proof}

The remainder of the proof of Theorem~\ref{Vu_bound} follows the proof of Theorem~\ref{col_result}. More specifically, instead of Theorem~\ref{thm:independent} we use Theorem~\ref{thm:independent with codegree} to establish a codegree analogue of Lemma~\ref{lem:list colouring}. We then repeatedly apply that lemma as done with Lemma~\ref{lem:list colouring} in Subsection~\ref{sub:proof}. We omit the redundant details.

\section{Conclusion}

Our main contribution is a refinement of the na\"ive random colouring procedure through the analysis of a simple random priority assignment strategy. We have shown that this yields improved chromatic number bounds for graphs of bounded local density, for two distinct basic notions of local density, bounds that moreover are asymptotically optimal (in a precise sense) in the densest regime. This has yielded direct nontrivial progress in three longstanding conjectures due to Erd\H{o}s and Ne\v{s}et\v{r}il, Reed, and Vu. Although it is more than apparent that there is room for improvement in our bounds (and we hope and expect to encounter further progress along these lines in the coming years), the optimality in our bounds, while it is infinitesimal, marks an important milestone. Moreover our findings suggest that other random colouring procedures could benefit from a similar priority assignment strategy; see also~\cite{PeSr08}.

Let us remark that our approach directly yields randomised polynomial-time algorithms, specifically of time complexity that is linear in the number of vertices and polynomial in the maximum degree, for obtaining proper colourings using the claimed number of colours in Theorems~\ref{col_result} and~\ref{Vu_bound}. This follows by applying an algorithmic form of the Lov\'asz local lemma due to Moser and Tardos~\cite{MoTa10} and by simulating the regularisation process through the inclusion of dummy vertices.

One drawback of our methods is that they do not, to our knowledge, easily yield bounds on the list chromatic number of equivalent quantitative quality. Overcoming this obstacle (see Remark~\ref{rem:list}) would be of particular interest for Vu's conjecture, for example.

Theorems~\ref{col_result} and~\ref{Vu_bound} address the ``asymptotically dense'' regimes for two notions of bounded local density, the former for local average degree, the latter for local maximum degree. Our explorations naturally point towards studying the corresponding density regime for other notions of local density. For example, one could consider graphs of bounded (local) clique number, it then being arguably the most ``Ramsey-type'' colouring problem of this type. One referee kindly reminded us that Reed (in the paper from which Conjecture~\ref{conj:Ree98} originates) already showed the following.
\begin{theorem}[\cite{Ree98}]
\label{prob:Ramsey}
Define $\eps_{\ref{prob:Ramsey}} = \eps_{\ref{prob:Ramsey}}(\clsigma) = \clsigma/2$.
For each $0<\clsigma\le1/70000000$, there is $\Delta_{\ref{prob:Ramsey}}$ such that the chromatic number satisfies $\chi(G)\le (1-\eps_{\ref{prob:Ramsey}}(\clsigma))\Delta(G)+1/2$ for any graph $G$ with $\omega(G) \le (1-\clsigma)\Delta(G)$ and $\Delta(G)\ge\Delta_{\ref{prob:Ramsey}}$.
\end{theorem}
\noindent
Moreover, as $\clsigma\to0$, the factor $1/2$ in $\eps_{\ref{prob:Ramsey}}(\clsigma)$ is best possible, due to a probabilistic construction~\cite[Thm.~2]{Ree98}.
Another possibility perhaps worth pursuing is some analogue (of Theorems~\ref{col_result},~\ref{Vu_bound}, and~\ref{prob:Ramsey}) under the condition of bounded local Hall ratio.

%We are curious as to what is the analogy of Theorems~\ref{col_result} and~\ref{Vu_bound} for bounded clique number. We note that Molloy and Reed~\cite{MoRe14} have earlier obtained important yet somewhat more specific results in this regime. Could it even be that $c=1$ below?
%
%
%\begin{prob}\label{prob:Ramsey}
%What is the largest $c$ for which the following holds?
%There exists some $\eps_{\ref{prob:Ramsey}} = \eps_{\ref{prob:Ramsey}}(\clsigma)$ satisfying $\eps_{\ref{prob:Ramsey}}(\clsigma) = c\clsigma + o(\clsigma)$ as $\clsigma\to0$ such that the following holds.
%For each $\iota>0$ and $0<\clsigma\le1$, there is $\Delta_{\ref{prob:Ramsey}}=\Delta_{\ref{prob:Ramsey}}(\iota)$ such that the chromatic number satisfies $\chi(G)\le (1-\eps_{\ref{prob:Ramsey}}(\clsigma)+\iota)\Delta(G)$ for any graph $G$ with $\omega(G) \le (1-\clsigma)\Delta(G)$ and $\Delta(G)\ge\Delta_{\ref{prob:Ramsey}}$.
%\end{prob}

%\newpage %% AUTHOR: please comment out this line.  It serves only
%%   to demonstrate both types of header line in aic-template.pdf
%
%\section{Expansion estimates}
%
% More of the body of your paper goes here~\cite{bergelson-johnson-moreira}.

%%% AUTHOR: optional appendix here
\appendix %% you may comment this out if no Appendix
\section*{Appendix}
\section{Exceptional outcomes and Talagrand's inequality}\label{sec:talagrand}

%% Given probability spaces $((\Omega_i,\sigma_i,\Prob_i))_{i=1}^n$ with product
%% $\Omega=\prod_{i=1}^n\Omega_i$ and a set of indices $J\subseteq I$,
%% let $\pi_J:\Omega\to\prod_{i\in J}\Omega_i$ be the natural projection that maps $\omega\in\Omega$
%% to $\omega_{|J}$.
%% It is known that~$\pi_J$ is a measurable function (for the product $\sigma$-algebras).
%% Moreover, if $A\subseteq\Omega$ is measurable, then $\pi_J(A)$ and $\pi_{[n]\setminus J}(A)$
%% are measurable (for the corresponding product $\sigma$-algebras) and
%% $\Prob[A]\leq\Prob\left[\pi_J(A)\right]\cdot\Prob\left[\pi_{[n]\setminus J}(A)\right]$.

Theorem~\ref{thm:upward certs} was stated by Bruhn and Joos as is in terms of general probability spaces, but the result of Talagrand \cite{Tal95} from which the statement was derived was shown ignoring questions of measurability.  Talagrand elected to do this so as to make the proof more convenient and, as he observed~\cite[p.~11]{Tal95}, questions of measurability are largely irrelevant when proving such bounds since one can always use measurable approximations. With respect to algorithmic and combinatorial applications of Talagrand's inequality, the sample space is indeed usually discrete, in which case this discussion is moot. Nevertheless, one could argue that what was proved by Bruhn and Joos is as follows.

\begin{theorem}[Bruhn and Joos~\cite{BrJo18}]\label{thm:BrJo18}
The statement of Theorem~\ref{thm:upward certs} holds true under the additional assumption that $((\Omega_i,\sigma_i,\Prob_i))_{i=1}^n$ are discrete probability spaces.
\end{theorem}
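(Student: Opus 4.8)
The plan is to reconstruct Bruhn and Joos's argument, whose engine is Talagrand's convex-distance inequality. Because every $\Omega_i$ is discrete, the product $\Omega$ is discrete, the convex distance $d_T(\omega,A)$ from a point $\omega$ to a set $A\subseteq\Omega$ is unambiguously defined, and every set appearing below is automatically measurable; this is exactly why in the discrete regime there is no measurability burden and one may cite Talagrand's inequality in the form $\Prob[A]\cdot\Prob[\,d_T(\cdot,A)\ge u\,]\le e^{-u^2/4}$ for all $A\subseteq\Omega$ and $u\ge 0$. One may also assume from the outset that $M$ is large, since if $\Prob[\Omega^*]>1/4$ then $M<2$, whence $|X-\E X|<4<50c\sqrt s<t$ and the claimed bound is trivially true; so henceforth $\Prob[\Omega^*]\le 1/4$.

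The first substantive step is to introduce the sub- and super-level sets with the exceptional outcomes excised, since the downward-certificate hypothesis is only usable off $\Omega^*$: for a real $a$ set $A_a=\{\omega\in\Omega\setminus\Omega^*:X(\omega)\le a\}$ and $B_a=\{\omega\in\Omega\setminus\Omega^*:X(\omega)\ge a\}$. The heart of the proof is the geometric bridge: if $\omega\in\Omega\setminus\Omega^*$ satisfies $X(\omega)\le a-t$, then $d_T(\omega,B_a)\ge t/(c\sqrt s)$, and symmetrically $X(\omega)\ge a+t$ implies $d_T(\omega,A_a)\ge t/(c\sqrt s)$. For the first statement, let $I=I(\omega)$, $|I|\le s$, be the index set supplied by the downward $(s,c)$-certificate at $\omega$; for any $\omega'\in B_a$ the certificate property (applicable since $\omega'\notin\Omega^*$) forces $\omega$ and $\omega'$ to disagree in at least $t/c$ coordinates inside $I$, for otherwise $X(\omega')\le X(\omega)+t'<a$ for a suitable $t'<t$, contradicting $\omega'\in B_a$. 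As $I$ depends only on $\omega$, the unit weight vector uniform on $I$ and zero elsewhere witnesses $d_T(\omega,B_a)\ge (t/c)/\sqrt{|I|}\ge t/(c\sqrt s)$. The second statement is the delicate one: there the certificate must be invoked at the moving element $\omega'\in A_a$, so the index set $I(\omega')$ varies and no single weight vector is available; instead one passes to Talagrand's minimax (transportation) characterisation of $d_T$ and checks that every probability distribution on $A_a$ places disagreement-mass at least $t/c$ inside a set of at most $s$ coordinates, again from the certificate property. Reconciling these varying index sets with the single convex-distance functional is the step I expect to be the main obstacle.

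Granted the bridge, Talagrand's inequality gives, for every real $a$ and $t>0$ (writing $u=t/(c\sqrt s)$),
\[
\Prob[X\le a-t]\le \Prob[\Omega^*]+\frac{e^{-t^2/(4c^2s)}}{\Prob[B_a]}
\qquad\text{and}\qquad
\Prob[X\ge a+t]\le \Prob[\Omega^*]+\frac{e^{-t^2/(4c^2s)}}{\Prob[A_a]},
\]
using $\{X\le a-t\}\subseteq\Omega^*\cup\{d_T(\cdot,B_a)\ge u\}$ and its mirror image. It then remains to centre the estimate. Take $a=m$ a median of $X$; then $\Prob[A_m],\Prob[B_m]\ge \tfrac12-\Prob[\Omega^*]\ge\tfrac14$, so $\Prob[|X-m|\ge t]\le 4e^{-t^2/(4c^2s)}+2\Prob[\Omega^*]$. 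Finally replace the median by the mean via $|\E X-m|\le\int_0^\infty\Prob[|X-m|>u]\,du$, which is bounded by an absolute constant times $c\sqrt s$ plus a term $\le M\,\Prob[\Omega^*]\le M^{-1}\le 1$ (the factor $M$ enters because $|X|\le M$ caps the contribution of $\Omega^*$). Absorbing this $O(c\sqrt s)$ shift into the threshold — this is where the hypothesis $t>50c\sqrt s$ is consumed — and readjusting the constant in the exponent from $4c^2s$ to $16c^2s$ yields $\Prob[|X-\E X|\ge t]\le 4e^{-t^2/(16c^2s)}+4\Prob[\Omega^*]$, as required. (A reduction to the ordinary certifiable Talagrand inequality, applied to a sup-convolution modification of $X$ that agrees with $X$ off $\Omega^*$, is also conceivable, but the direct convex-distance route above is cleaner and self-contained.)
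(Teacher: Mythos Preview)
The paper does not prove this theorem; it is quoted as the result actually established by Bruhn and Joos, and the paper's appendix then \emph{uses} it to deduce the general-probability-space statement (Theorem~\ref{thm:downward certs}) via a discretisation argument. So there is no paper proof to compare against.

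Your outline is broadly correct, but there is a real gap at exactly the point you flag as ``the main obstacle''. The second geometric bridge --- that $X(\omega)\ge a+t$ and $\omega\notin\Omega^*$ force $d_T(\omega,A_a)\ge t/(c\sqrt s)$ --- does not follow from the downward-certificate hypothesis, and your appeal to the minimax form of $d_T$ does not repair it. Even granting that each $\omega'\in A_a$ disagrees with $\omega$ on at least $t/c$ coordinates of some set $I(\omega')$ of size at most $s$, a probability measure $\nu$ on $A_a$ whose certificate sets $I(\omega')$ are spread across many disjoint coordinates can make the $\ell_2$ norm of the averaged disagreement vector $\bigl(\Prob_{\omega'\sim\nu}[\omega_i\ne\omega'_i]\bigr)_i$ arbitrarily small, so no lower bound on $d_T(\omega,A_a)$ results.

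The good news is that the second bridge is unnecessary. The first bridge alone already says $A_a\subseteq\{d_T(\cdot,B_{a+t})\ge u\}$ with $u=t/(c\sqrt s)$, so applying Talagrand's inequality with the set $B_{a+t}$ (rather than $A_a$) gives the single product inequality $\Prob[B_{a+t}]\cdot\Prob[A_a]\le e^{-u^2/4}$. Now take $a=m$ to bound $\Prob[B_{m+t}]$ and take $a=m-t$ to bound $\Prob[A_{m-t}]$; in each case the other factor is at least $\tfrac12-\Prob[\Omega^*]\ge\tfrac14$, yielding both tails around the median from the same one-sided certificate argument. The rest of your plan (median-to-mean via integration, absorbing the $O(c\sqrt s)$ shift using $t>50c\sqrt s$) then goes through.
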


\noindent
Our work makes important use of the sample space $[0,1]$, for which we have applied Theorem~\ref{thm:upward certs} rather than Theorem~\ref{thm:BrJo18}. 
For the elimination of any shadow of a doubt, we next explain how Theorem~\ref{thm:upward certs},
if true for finite probability spaces, also holds without any assumption on the probability spaces. That is, we show how Theorem~\ref{thm:BrJo18} implies Theorem~\ref{thm:upward certs}.
In order to do so, we approximate the random variables by random variables on finite spaces
by applying the following property.
\begin{proposition}\label{prop:finite image discretization}
  Let $(\Omega_i,\sigma_i,\Prob_i)_{i=1}^n$ be probability spaces
  and $(\Omega,\sigma,\Prob)$ be their product.
  Let $X:\Omega\to S$ be a random variable with values in a finite set~$S$.
  Fix $\epsilon>0$.
  For each $i\in[n]$, there is a finite subset $F_i\subseteq\Omega_i$ and
  a measurable function $\phi_i$ from $(\Omega_i,\sigma_i)$ to $(F_i, 2^{F_i})$,
  such that the function
  $\phi(\omega_1,\dots,\omega_n):=(\phi_1(\omega_1),\dots,\phi_n(\omega_n))$
  satisfies
  \[
  \Prob_{\omega\in\Omega}[X(\omega)\neq X(\phi(\omega))]<\epsilon.
  \]
\end{proposition}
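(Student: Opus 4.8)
The statement to prove is Proposition~\ref{prop:finite image discretization}: given a random variable $X:\Omega\to S$ with $S$ finite on a product probability space, and given $\epsilon>0$, we want coordinatewise finite discretizations $\phi_i:\Omega_i\to F_i$ so that $X\circ\phi$ disagrees with $X$ only on a set of probability less than $\epsilon$. The plan is to reduce to a single level set, then to a product of rectangles approximating that level set, and then to collapse each coordinate's finitely many relevant "slabs" into a single representative point.

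\textbf{Step 1: reduce to approximating preimages of points by finite unions of measurable rectangles.} For each $s\in S$ the set $A_s:=X^{-1}(\{s\})$ lies in the product $\sigma$-algebra $\sigma$. The algebra generated by the measurable rectangles $\prod_i B_i$ (with $B_i\in\sigma_i$, all but finitely many equal to $\Omega_i$ — here all $n$ coordinates, so just $B_i\in\sigma_i$) generates $\sigma$, so by the standard approximation theorem for $\sigma$-algebras (Carathéodory / the "good sets" principle: the product measure of the symmetric difference can be made arbitrarily small by an element of the generating algebra) there is, for each $s$, a finite disjoint union $R_s=\bigcup_{j} \prod_{i} B_{i,j}^{(s)}$ of measurable rectangles with $\Prob[A_s\triangle R_s]<\epsilon/(2|S|)$. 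Collecting all the rectangle-sides appearing across all $s$ and all $j$, for each coordinate $i$ we obtain a finite collection $\mathcal{C}_i\subseteq\sigma_i$ of measurable subsets of $\Omega_i$.

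\textbf{Step 2: build the coordinate maps $\phi_i$ by atoms of the finite sub-$\sigma$-algebra generated by $\mathcal{C}_i$.} The finite family $\mathcal{C}_i$ generates a finite sub-$\sigma$-algebra of $\sigma_i$ whose atoms $P_{i,1},\dots,P_{i,m_i}$ partition $\Omega_i$ into finitely many measurable pieces, each $P_{i,\ell}$ being an intersection of members of $\mathcal{C}_i$ and their complements. From each nonempty atom pick one point $p_{i,\ell}\in P_{i,\ell}$, set $F_i=\{p_{i,\ell}\}_\ell$, and define $\phi_i:\Omega_i\to F_i$ by $\phi_i(\omega_i)=p_{i,\ell}$ whenever $\omega_i\in P_{i,\ell}$; this is measurable since each $\phi_i^{-1}(\{p_{i,\ell}\})=P_{i,\ell}\in\sigma_i$. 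The crucial point is that every rectangle-side $B_{i,j}^{(s)}$ is a union of atoms $P_{i,\ell}$, so for each $s$ the set $R_s$ is $\phi$-invariant: if $\omega\in R_s$ then $\phi(\omega)\in R_s$, because $\omega_i$ and $\phi_i(\omega_i)$ lie in the same atom and hence on the same side of every rectangle, and dually if $\omega\notin R_s$ then $\phi(\omega)\notin R_s$. Thus $\mathbb{1}_{R_s}(\omega)=\mathbb{1}_{R_s}(\phi(\omega))$ for all $\omega$ and all $s$.

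\textbf{Step 3: conclude via a union bound.} Let $B=\bigcup_s (A_s\triangle R_s)$, so $\Prob[B]<\epsilon/2$; also $\phi$ is measure-preserving in the sense that $\Prob[\phi^{-1}(C)]=\Prob[C]$ for $C$ a union of atom-rectangles — in particular $\Prob[\phi^{-1}(B)]=\Prob[B]<\epsilon/2$, using that $\phi$ pushes the product measure to itself on the finite sub-$\sigma$-algebra (each atom of coordinate $i$ has the same measure under $\phi_i$-pushforward by construction). If $\omega\notin B$ and $\phi(\omega)\notin B$, then writing $s=X(\omega)$ we have $\omega\in A_s$, so $\omega\in R_s$ (since $\omega\notin A_s\triangle R_s$), hence $\phi(\omega)\in R_s$ by Step~2, hence $\phi(\omega)\in A_s$ (since $\phi(\omega)\notin A_s\triangle R_s$), i.e. $X(\phi(\omega))=s=X(\omega)$. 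Therefore $\{X\neq X\circ\phi\}\subseteq B\cup\phi^{-1}(B)$, giving $\Prob[X(\omega)\neq X(\phi(\omega))]\le\Prob[B]+\Prob[\phi^{-1}(B)]<\epsilon$, as required.

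\textbf{Main obstacle.} The only genuinely delicate point is Step~1, the approximation of each $A_s\in\sigma$ by a finite union of measurable rectangles up to small product measure; one must invoke the fact that finite unions of measurable rectangles form an algebra generating the product $\sigma$-algebra and then apply the measure-theoretic approximation lemma (elements of a generating algebra approximate any set of the generated $\sigma$-algebra in symmetric difference). Everything after that — passing to the finite sub-$\sigma$-algebra generated by the finitely many sides, using its atoms, and checking $\phi$-invariance and the union bound — is routine bookkeeping. Note that measure-preservation of $\phi_i$ on its finite range holds automatically because the atoms of a finite $\sigma$-algebra partition $\Omega_i$ and each is sent to a single point with its own mass; no extra hypothesis is needed, and in particular the spaces $\Omega_i$ may be continuous such as $[0,1]$, which is exactly the setting needed for the applications in the paper.
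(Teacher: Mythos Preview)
Your Steps~1 and~2 are essentially the same as the paper's approach (the paper packages Step~1 as Proposition~A.3), and your Step~3 reduction $\{X\neq X\circ\phi\}\subseteq B\cup\phi^{-1}(B)$ is also correct and matches the paper. The gap is in how you bound $\Prob[\phi^{-1}(B)]$.

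You assert that $\Prob[\phi^{-1}(B)]=\Prob[B]$ on the grounds that $\phi$ is measure-preserving on unions of atom-rectangles. That identity is true for such unions, but $B=\bigcup_s(A_s\triangle R_s)$ is \emph{not} a union of atom-rectangles: each $R_s$ is, but $A_s=X^{-1}(\{s\})$ is an arbitrary measurable set. The pushforward $\phi_*\Prob$ is a discrete measure supported on the finite grid $\prod_i F_i$, and for a general measurable set $B$ the value $\Prob[\phi^{-1}(B)]=\phi_*\Prob(B)$ depends entirely on which grid points happen to lie in $B$. With your arbitrary choice of representatives nothing prevents many of them from landing inside the small set $B$; for instance, if one coordinate atom $P_{i,\ell}$ has $P_{i,\ell}\cap(\text{projection of }B)$ nonempty and you happen to pick $p_{i,\ell}$ there, an entire slab of atom-rectangles can fall into $\phi^{-1}(B)$.

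This is precisely why the paper does not pick the representatives arbitrarily. Instead it chooses each representative $x_{i,j}\in\Omega_{i,j}$ \emph{randomly} according to the conditional law $\Prob_i[\,\cdot\mid\Omega_{i,j}]$, independently across $i,j$. Then for each atom-rectangle $B'$ the representative of $B'$ is a $\Prob[\,\cdot\mid B']$-distributed point of $B'$, so the probability (over the random choice of $\phi$) that it lands in $B$ is $\Prob[B\mid B']$; summing $\Prob[B']\cdot\Prob[B\mid B']$ over all atom-rectangles gives $\E_\phi\bigl[\Prob[\phi^{-1}(B)]\bigr]=\Prob[B]\le\epsilon/2$, and the probabilistic method yields a specific $\phi$ with $\Prob[\phi^{-1}(B)]\le\epsilon/2$. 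Your proof becomes correct once you replace ``pick one point $p_{i,\ell}$'' by this averaging argument.
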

%We now prove the result of Bruhn and Joos in the general case, assuming it holds for finite probability spaces.
%\begin{theorem}
%  Let $((\Omega_i,\sigma_i,\Prob_i))_{i=1}^n$ be probability spaces, $(\Omega,\sigma,\Prob)$ be their product space and $\Omega^*\subseteq\Omega$ a set of exceptional outcomes.
%  Let $X:\Omega\to \R$ be a random variable, $M=\max\{\sup|X|,1\}$, and $c\geq 1$.
%  If~$\Prob[\Omega^*]\leq M^{-2}$ and $X$ has upward $(s,c)$-certificates then for $t>50c\sqrt{s}$,
%  \begin{equation}
%    \Prob[|X-\E[X]|\geq t]\leq 4e^{-\frac{t^2}{16c^2s}}+4\Prob[\Omega^*].
%  \end{equation}
%\end{theorem}
Before proving this property, let us first use it together with Theorem~\ref{thm:BrJo18} to derive Theorem~\ref{thm:upward certs}.

\begin{proof}[Proof of Theorem~\ref{thm:upward certs}]
  We first prove the theorem under the extra assumption that~$X$ has a finite image
  and that~$\Prob[\Omega^*]<M^{-2}$.

  In order to approximate both~$X$ and~$\Omega^*$ with
  Proposition~\ref{prop:finite image discretization}, we define
  a random variable~$X^*$ such that
  $X^*(\omega)=(X(\omega), 0)$ for $\omega\notin\Omega^*$,
  and $X^*(\omega)=(X(\omega), 1)$ for $\omega\in\Omega^*$.
  The function $X^*:\Omega\to F\times\{0,1\}$ is measurable because
  the function~$X$ and the set~$\Omega^*$ are measurable.
  
  Let $\phi:\Omega\to\prod_{i=1}^nF_i\subseteq\Omega$ be as
  in the statement of Proposition~\ref{prop:finite image discretization} applied with the random variable $X^*$ and
  with some small enough $\epsilon>0$.
  The probability space $F_i$ is endowed with the discrete probability
  defined by $\Prob_i'[A]:=\Prob_i[\phi_i^{-1}(A)]$
  for every $A\subset F_i$.
  Let $(F=\prod_{i=1}^nF_i, 2^F, \Prob')$ be their product space and
  write~$X_{|F}$ the restriction of~$X$ to~$F$.

  Considering~$X_{|F}$ as a random variable on the finite product space~$F$,
  we aim to apply Theorem~\ref{thm:BrJo18} to $X_{|F}$.
  Let us define a set of exceptional outcomes as $\Omega^*_F=\Omega^*\cap F$.
  The measure of~$\Omega_F^*$, is estimated by
  \[
  \Prob'[\Omega_F^*]\leq \Prob[\Omega^*] + \epsilon,
  \]
  because 
  \[
  \phi^{-1}(\Omega_F^*)\subseteq\sst{\omega\in\Omega}{\omega\in\Omega^*\text{ or }X^*(\omega)\neq X^*(\alpha)}.
  \]
  In particular $\Prob'[\Omega_F^*]<M^{-2}$ if~$\epsilon$ is small enough.
  Moreover, the random variable $X_{|F}$ clearly has $(s,c)$-upward certificates
  because $F\setminus\Omega_F^*$ is a subset of $\Omega\setminus\Omega^*$.

  Theorem~\ref{thm:BrJo18} applied to $X_{|F}$ therefore gives
  \[
  \Prob[|X_{|F}-\E[X_{|F}]|\geq t]\leq4e^{-\frac{t^2}{16c^2s}}+4\Prob[\Omega^*_1]\leq
  4e^{-\frac{t^2}{16c^2s}}+4\Prob[\Omega^*]+4\epsilon.
  \]
  Recall that $\Prob_{\omega\in\Omega}[X^*(\omega)\neq X^*(\phi(\omega))]\leq\epsilon$,
  so $\Prob[X(\omega)\neq X(\phi(\omega))]\leq\epsilon$.
  Since moreover $|X(\phi(\omega)) - X(\omega)|\leq 2\sup |X|\leq2M$,
  the expectations $\E[X]$ and $\E[X_{|F}]$ differ by at most $2M\epsilon$ and
  \[
  \Prob_\Omega[|X-\E[X]|\geq t + 2M\epsilon]\leq
  \Prob_F[|X_{|F}-\E[X_{|F}]|\geq t] + \epsilon \leq
  4e^{-\frac{t^2}{16c^2s}}+4\Prob[\Omega^*]+5\epsilon.
  \]
  Letting $\epsilon$ tend to $0$ shows that
  $\Prob[|X-\E[X]> t]\leq 4e^{-\frac{t^2}{16c^2s}}+4\Prob[\Omega^*]$.
  The exact sought statement on $\Prob[|X-\E[X]\geq t]$ with the non-strict inequality
  automatically follows approximating $t$ from below.
  This concludes the proof in the case where $X$ has finite image and
  $\Prob[\Omega^*]<M^{-2}$.
  
  We now prove the result in the general case by approximating~$X$
  with a function with finite image.
  First, we can assume that $X$ is bounded.
  Indeed, the upward certificates of~$X$ easily implies that the values $X$ takes on
  $\Omega\setminus\Omega^*$ are at distance at most~$n/c$ from each other.
  Moreover, if~$X$ is unbounded, then $M=+\infty$ and $\Omega^*$ is a set of measure $0$,
  so the random variable~$X'$ that is equal to $X$ on $\Omega\setminus\Omega^*$ and
  to~$0$ on $\Omega^*$ is bounded and equal to $X$ almost everywhere.
  In this case, it is enough to show the result for~$X'$.

  Fix some $0<\epsilon<c$ and
  let~$X'$ be a rounding of $X$ defined by
  $X'=(1-\epsilon)\lfloor X/\epsilon^2\rfloor\epsilon^2$.
  The function $X'$ has $(s,c)$-upward certificates
  because if $X(\omega)< X(\omega')+t$ for some $t\geq c$, then
  $X'(\omega) - X'(\omega') \leq (1-\epsilon)(X(\omega) - X(\omega') + \epsilon^2) \leq (1-\epsilon)(t+\epsilon^2)$, which is smaller than $t$ provided that $\epsilon<c$.
  Moreover, $\sup|X'|\leq (1-\epsilon)(\sup|X|+\epsilon^2)<\sup|X|$ for $\epsilon$ small enough.
  In that case, $M':=\max(\sup|X|,1)$ is strictly smaller than~$M$
  unless $M=M'=1$. Since the result is trivial if $\Prob[\Omega^*]\geq\frac14$,
  we have $\Prob[\Omega^*]<M^{-2}$ in any case.

  The case of Theorem~\ref{thm:upward certs} proved above therefore applies to $X'$ and gives
  $\Prob[|X'-E[X']|\geq t]\leq 4e^{-\frac{t^2}{16c^2s}}+4\Prob[\Omega^*]$
  for $t>50c\sqrt{s}$.
  Since $|X'-X|\leq\epsilon^2$ on $\Omega$,
  $\Prob[|X-E[X]|\geq t+2\epsilon^2]\leq
  \Prob[|X'-E[X']|\geq t]\leq 4e^{-\frac{t^2}{16c^2s}}+4\Prob[\Omega^*]$.
  Again, letting $\epsilon$ tend to $0$ and approximating $t$ from below gives the sought bound.
\end{proof}

\subsection{Proof of Proposition~\ref{prop:finite image discretization}}

If~$A$ and~$B$ are two sets, $A\triangle B$ denotes the symmetric difference of~$A$ and~$B$,
that is~$A\triangle B=(A\setminus B)\cup(B\setminus A)$.
The proof of Proposition~\ref{prop:finite image discretization} is based on the following
property of measure theory about the structure of measurable sets.
\begin{proposition}[\cite{GHJSV15}, Lemma A.1]\label{prop:approximation of measurable}
  If~$(\Omega,\sigma,\Prob)$ is a probability space and~$\mathcal{S}$ is a nonempty family
  of measurable subsets of~$\Omega$ such that 
  \begin{itemize}
  \item $\mathcal{S}$ generates the $\sigma$-algebra $\sigma$; and
  \item $\mathcal{S}$ is stable under finite unions and complementary operations,
  \end{itemize}
  then for every~$\epsilon>0$ and every measurable set~$A$
  there is~$B\in\mathcal{S}$ such that $\Prob(A\triangle B)\leq\epsilon$.
\end{proposition}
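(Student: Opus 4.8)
The plan is the classical ``good sets'' argument, which shows that an algebra is dense, for the pseudometric $d(A,B)=\Prob(A\triangle B)$, in the $\sigma$-algebra it generates. First I would record that $\mathcal S$ is in fact an algebra of subsets of $\Omega$: it is nonempty and closed under finite unions and complements by hypothesis, hence also under finite intersections (De Morgan), and it contains $\Omega$ and $\varnothing$ (take any $B\in\mathcal S$ and form $B\cup(\Omega\setminus B)$ and its complement).

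Next, I would consider the family of all ``good sets'',
\[
\mathcal D=\left\{\,A\in\sigma \;:\; \text{for every } \epsilon>0 \text{ there is } B\in\mathcal S \text{ with } \Prob(A\triangle B)\le\epsilon\,\right\},
\]
and show that $\mathcal D$ equals $\sigma$; since this is exactly the assertion, this would finish the proof. As $\mathcal S\subseteq\mathcal D$ trivially (take $B=A$) and $\mathcal S$ generates $\sigma$, it suffices to check that $\mathcal D$ is a $\sigma$-algebra. It contains $\Omega$; it is closed under complementation because $\Prob\bigl((\Omega\setminus A)\triangle(\Omega\setminus B)\bigr)=\Prob(A\triangle B)$ and $\Omega\setminus B\in\mathcal S$. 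The one step needing a little care is closure under countable unions: given $A_1,A_2,\dots\in\mathcal D$ with union $A$ and given $\epsilon>0$, I would first invoke continuity of $\Prob$ from below to choose $N$ with $\Prob\bigl(A\setminus\bigcup_{k\le N}A_k\bigr)\le\epsilon/2$, then choose $B_k\in\mathcal S$ with $\Prob(A_k\triangle B_k)\le\epsilon/(2N)$ for each $k\le N$, and set $B=\bigcup_{k\le N}B_k$, which lies in $\mathcal S$ exactly because $\mathcal S$ is closed under \emph{finite} unions. From the elementary inclusion $\bigl(\bigcup_kC_k\bigr)\triangle\bigl(\bigcup_kD_k\bigr)\subseteq\bigcup_k(C_k\triangle D_k)$ one obtains
\[
\Prob(A\triangle B)\le\Prob\Bigl(A\setminus\bigcup_{k\le N}A_k\Bigr)+\sum_{k\le N}\Prob(A_k\triangle B_k)\le\frac{\epsilon}{2}+N\cdot\frac{\epsilon}{2N}=\epsilon,
\]
so $A\in\mathcal D$. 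Hence $\mathcal D$ is a $\sigma$-algebra containing $\mathcal S$, so $\mathcal D\supseteq\sigma(\mathcal S)=\sigma$, as required.

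This argument is essentially routine, and I do not anticipate a genuine obstacle. The only point to stay alert to is that $\mathcal S$ is assumed stable only under \emph{finite} unions, so a countable union cannot be handled by combining countably many approximants; one must first truncate the union to $N$ terms — permissible at a cost of $\epsilon/2$ by continuity of $\Prob$ from below — and only then take a (finite) union of members of $\mathcal S$. The remaining ingredients — that symmetric difference is subadditive over unions and invariant under complementation, and that the ``good set'' family $\mathcal D$ trivially contains the generating family $\mathcal S$ — are immediate.
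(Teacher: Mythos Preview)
Your argument is correct and is the standard ``good sets'' (or Dynkin-style) density argument. Note, however, that the paper does not actually supply a proof of this proposition: it is quoted verbatim as Lemma~A.1 of \cite{GoaecHVSV15} and used as a black box in the appendix. So there is nothing in the paper to compare against; your write-up simply fills in what the authors chose to cite rather than reprove.
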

Applied to product probability spaces,
Proposition~\ref{prop:approximation of measurable} has the following consequence.
\begin{proposition}\label{prop:boxes}
  Let $(\Omega_i,\sigma_i,\Prob_i)_{i=1}^n$ be probability spaces
  with product $(\Omega,\sigma,\Prob)$.
  Let $\mathcal{A}\subseteq \sigma_i$ be a finite set of measurable sets.
  For every $\epsilon>0$,
  there is a finite measurable partition $\Omega_i=\bigcup_{j=1}^{p_i}\Omega_{i,j}$
  for each $i\in[n]$ such that the following holds.
  Let $\B$ be the set of boxes of the form $\prod_{i=1}^n\Omega_{i,j(i)}$,
  then for every $A\in\mathcal{A}$ there is a set of
  boxes $\B_A\subseteq\B$ such that
  \[
  \Prob\left[A \triangle \bigcup_{B\in\B_A}B\right]\leq\epsilon.
  \]
\end{proposition}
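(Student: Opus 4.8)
The plan is to apply Proposition~\ref{prop:approximation of measurable} to the product space $(\Omega,\sigma,\Prob)$ with $\mathcal S$ chosen as the collection of all \emph{finite unions of measurable boxes} $\prod_{i=1}^n A_i$ (with each $A_i\in\sigma_i$), and then to amalgamate the finitely many coordinate sets that are produced into one finite measurable partition per coordinate. (Here $\mathcal A$ is understood as a finite family of sets in the product $\sigma$-algebra.)

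First I would verify that $\mathcal S$ meets the two hypotheses of Proposition~\ref{prop:approximation of measurable}. That $\mathcal S$ generates $\sigma$ is just the definition of the product $\sigma$-algebra, and stability under finite unions is immediate. For stability under complements, I would use the standard fact that the complement of a single measurable box is a finite disjoint union of measurable boxes, namely
\[
\Omega\setminus\prod_{i=1}^n A_i=\bigcup_{\varnothing\neq J\subseteq[n]}\Big(\prod_{i\in J}(\Omega_i\setminus A_i)\times\prod_{i\notin J}A_i\Big),
\]
together with De Morgan's law, which turns the complement of a finite union of boxes into a finite intersection of members of $\mathcal S$; since the intersection of two boxes is again a box and intersection distributes over union, this intersection lies in $\mathcal S$. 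Hence $\mathcal S$ is an algebra of sets generating $\sigma$, and Proposition~\ref{prop:approximation of measurable} applies.

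Next, for each $A\in\mathcal A$ pick $B_A\in\mathcal S$ with $\Prob(A\triangle B_A)\le\epsilon$, and write $B_A=\bigcup_{k=1}^{m_A}\prod_{i=1}^n A^{(A,k)}_i$ with $A^{(A,k)}_i\in\sigma_i$. For each fixed $i\in[n]$, the family $\mathcal F_i=\{A^{(A,k)}_i: A\in\mathcal A,\ 1\le k\le m_A\}$ is finite (this is the one place the finiteness of $\mathcal A$ is used), so the Boolean algebra it generates inside $\sigma_i$ is finite; let $\Omega_i=\bigcup_{j=1}^{p_i}\Omega_{i,j}$ be the partition of $\Omega_i$ into its atoms. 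Each $\Omega_{i,j}$ is measurable, and each member of $\mathcal F_i$ is a union of some of the $\Omega_{i,j}$. Consequently every box $\prod_i A^{(A,k)}_i$ is a union of boxes from the box family $\mathcal B$ determined by these partitions, hence so is $B_A$: there is $\mathcal B_A\subseteq\mathcal B$ with $\bigcup_{B\in\mathcal B_A}B=B_A$, and therefore $\Prob\big[A\triangle\bigcup_{B\in\mathcal B_A}B\big]=\Prob(A\triangle B_A)\le\epsilon$, as required.

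The only step that requires genuine care is the verification that finite unions of measurable boxes form an algebra generating the product $\sigma$-algebra (so that Proposition~\ref{prop:approximation of measurable} is applicable); the rest is routine bookkeeping, the key structural observation being that passing to atoms of the Boolean algebra generated by finitely many coordinate sets refines a finite union of boxes into a union of cells of a single product partition.
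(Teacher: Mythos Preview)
Your proposal is correct and follows essentially the same route as the paper: apply Proposition~\ref{prop:approximation of measurable} with $\mathcal S$ the algebra of finite unions of measurable boxes, approximate each $A\in\mathcal A$ by such a finite union, and then pass to the atoms of the Boolean algebra generated in each coordinate to obtain a single product partition. The paper simply asserts that $\mathcal S$ satisfies the hypotheses of Proposition~\ref{prop:approximation of measurable} and that ``the partition generated by the measurable sets $T_{i,j}$'' works, whereas you spell out the closure under complements and the atom construction explicitly; this is helpful but not a different argument.
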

\begin{proof}
  Consider the set $\mathcal{S}$ of union of boxes~$T$ of the form
  \[
  T=\bigcup_{i=1}^p\prod_{j=1}^nT_{i,j},
  \]
  where $p\in\N$ and
  $T_{i,j}$ ranges over the set~$\sigma_j$ of measurable sets of $\Omega_j$ for each
  $i\in[p]$ and $j\in[n]$.
  The set $\mathcal{S}$ satisfies the hypothesis of
  Proposition~\ref{prop:approximation of measurable},
  i.e. $\mathcal{S}$ is stable under finite unions and complementation and
  $\mathcal{S}$ generates the product $\sigma$-algebra $\sigma$.
  As a consequence, Proposition~\ref{prop:approximation of measurable}
  yields for each $A\in\mathcal{A}$ a union of boxes $T_A\in\mathcal{S}$
  such that $\Prob(A \triangle T_A)\leq\epsilon$.
  
  Now, it suffices to take for each $i\in [n]$,
  the partition $\Omega_i=\bigcup_{j=1}^{k_i}\Omega_{i,j}$
  generated by the measurable sets $T_{i,j}$ used in the box description of
  the $B\in\mathcal{B}_A$'s, so that
  each $T_A$ can be expressed as a union of boxes
  $T_A=\bigcup_{B\in\B_A}B$ for some subset $\B_A$ of
  the set of boxes of the form $\prod_{i=1}^n\Omega_{i,j(i)}$.
\end{proof}
It remains to prove Proposition~\ref{prop:finite image discretization}.
\begin{proof}[Proof of Proposition~\ref{prop:finite image discretization}]
  Applying Proposition~\ref{prop:boxes} to
  the finite set~$\mathcal{A}=\sst{X^{-1}(s)}{s\in S}$
  and error~$\frac{\epsilon}{2|S|}$
  gives
  partitions $\Omega_i=\bigcup_{j=1}^{k_i}\Omega_{i,j}$ and sets~$\B$
  and $(\B_A)_{A\in\mathcal{A}}$ as in the statement.
  By merging the elements $\Omega_{i,j}$ with probability~$0$ with
  an element $\Omega_{i,j'}$ with positive probability, we may further
  assume that $\Prob_i[\Omega_{i,j}]>0$ for every~$i$ and $j$.

  As a consequence, we can consider a random element~$x_{i,j}$ in $\Omega_{i,j}$
  (with respect to the conditional probability measure
  $\Prob_{i,j}[A]:=\Prob[A|\Omega_{i,j}]$), and set $\phi_i(x)=x_{i,j}$
  for every $x\in\Omega_{i,j}$.
  This gives rise to a random function $\phi(x_1,\dots,x_n):=(\phi_1(x_1),\dots,\phi_n(x_n))$
  from $\Omega$ to a finite subset of itself.

  Set $\Omega^*=\bigcup_{A\in\mathcal{A}} A \triangle (\bigcup_{B\in B_A}B)$ and
  consider the random variable
  \[
  E:=\sum_{\substack{B\in\B\\\phi(B)\subseteq\Omega^*}}\Prob[B].
  \]
  Since $\phi(B)$ is the singleton $\{(x_{1,j(a)},\dots,x_{n,j(n)})\}$
  if $B=\Prob_{i=1}^n\Omega_{i,j(i)}$, the condition $\phi(B)\subseteq\Omega^*$
  has to be read as $(x_{1,j(a)},\dots,x_{n,j(n)})\in\Omega^*$.
  The probability of this latter event is $\Prob[\Omega^*|B]$,
  so the expectation of $E$ is
  \[
  \E[E]=\sum_{B\in\B}\Prob[\Omega^*|B]\cdot\Prob[B]=\Prob[\Omega^*].
  \]
  Since further
  $\Prob[\Omega^*]\leq|\mathcal{A}|\cdot\epsilon/(2|S|)=\epsilon/2$,
  there exists $\phi$ such that $E\leq \epsilon/2$.

  We fix such a $\phi$. It remains to show $\Prob_{\omega\in\Omega}[X(\omega)\neq X(\phi(\omega))]\leq\epsilon$
  for this particular $\phi$.
  More precisely, we show that $X(\omega)= X(\phi(\omega))$
  whenever none of $\phi(\omega)$ and $\omega$ is in $\Omega^*$.
  Indeed, let $B_\omega\in\B$ be the box containing $\omega$ and $\phi(\omega)$.
  Setting $A=\sst{\omega'\in\Omega}{X(\omega')=X(\omega)}$,
  we have $\omega\in \bigcup_{B\in\B_A}B$
  because $\omega\in A$ and $\omega\notin A\triangle\bigcup_{B\in\B_A}B$,
  so $B_\omega\in\B_A$.
  Further, $\phi(\omega)\in B_\omega\in\B_A$
  so $\phi(\omega)\in A$
  because $\phi(\omega)\notin A\triangle\bigcup_{B\in\B_A}B$.
  We conclude that $X(\phi(\omega))=X(\omega)$ whenever none of $\phi(\omega)$ and $\omega$ is in $\Omega^*$, and thus 
\[\Prob_{\omega\in\Omega}[X(\omega)\neq X(\phi(\omega))]\leq \Prob[\omega \in \Omega^*]+\Prob[\phi(\omega)\in \Omega^*] \leq \Prob[\Omega^*]+\frac\epsilon2\leq\epsilon.\qedhere\]
\end{proof}

%%% AUTHOR: optional acknowledgments here
\section*{Acknowledgments} %%  you may comment this out if no Ackno
We are grateful to Luke Postle for bringing to our attention two important corrections upon an earlier version of this manuscript.
We are also thankful to several anonymous reviewers for their meticulous reading and for helpful comments and suggestions.

%%% AUTHOR:
%%% Bibliography goes here. Note that the arXiv cannot process bibtex
%%% or biber bibliographies.  Example of acceptable bibliograpy format:
\bibliographystyle{amsplain}

%% AUTHOR: You can generate such a bibliography from a .bib file by 
%% running pdflatex/bibtex/pdflatex/pdflatex and then pasting the .bbl file
%% between \begin{thebibliography} and \end{bibliography}

%%% AUTHOR: Include a short description of each author following the
%%% structure below. Use the same short tags used previously.  
%%% Use \imageat{} and \imagedot{} instead of "@" and "." in
%%% email addresses-this replaces the symbols with graphics to avoid 
%%% e-mail address harvesting from the .pdf file
\begin{aicauthors}
\begin{authorinfo}[eoin]
  Eoin Hurley\\
  Heidelberg University\\
  Heidelberg, Germany\\
  hurley\imageat{}informatik\imagedot{}uni-heidelberg\imagedot{}de\\
  \url{https://www.ifi.uni-heidelberg.de/theoi/team/eoin_hurley.html}
\end{authorinfo}
\begin{authorinfo}[remi]
  R\'emi de Joannis de Verclos\\
  Nijmegen, Netherlands\\
  remi\imagedot{}de\imagedot{}joannis\imagedot{}de\imagedot{}verclos\imageat{}ens-lyon\imagedot{}org\\
  \url{https://remi-de-verclos.github.io/}
\end{authorinfo}
\begin{authorinfo}[ross]
  Ross J. Kang\\
  University of Amsterdam\\
  Amsterdam, Netherlands\\
  ross\imagedot{}kang\imageat{}gmail\imagedot{}com\\
  \url{https://staff.fnwi.uva.nl/j.r.kang/}
\end{authorinfo}
\end{aicauthors}

\end{document}